\documentclass{amsart}[12pt]

\usepackage{amsfonts,amssymb,amsmath,amsthm}
\usepackage{amsaddr}

\usepackage{graphicx}
\usepackage{epstopdf}
\usepackage{caption}
\usepackage{subcaption}
\usepackage{color}

\usepackage[pdfpagelabels]{hyperref}

\newtheorem{thm}{Theorem}[section]
\newtheorem{prop}{Proposition}[section]
\newtheorem{lem}{Lemma}[section]
\newtheorem{cor}{Corollary}[section]

\theoremstyle{definition}
\newtheorem{defi}{Definition}[section]
\newtheorem{nota}{Notation}[section]

\theoremstyle{remark}
\newtheorem{rem}{Remark}[section]

\newcommand{\eea}{\end{eqnarray}}

\def\lb{\label}


\newcommand{\rr}{\mathbb{R}}
\newcommand{\cc}{\mathbb{C}}
\newcommand{\Sp}{\mathrm{Sp}}
\newcommand{\mf}{\mathfrak{M}}
\newcommand{\mr}{\mathfrak{R}}

\newcommand{\cf}{\mathcal{F}}

\newcommand{\al}{\alpha}

\newcommand{\ga}{\gamma}
\newcommand{\gm}{\gamma}
\newcommand{\tht}{\theta}
\newcommand{\lmd}{\lambda}
\newcommand{\zt}{\zeta}
\newcommand{\sg}{\sigma}
\newcommand{\om}{\omega}
\newcommand{\kp}{\kappa}

\newcommand{\vep}{\varepsilon}

\newcommand{\vr}{\varrho}

\newcommand{\gh}{\hat{\gamma}}

\newcommand{\ey}{\frac{1}{2}}

\newcommand{\xd}{\dot{x}}
\newcommand{\rd}{\dot{r}}
\newcommand{\thd}{\dot{\tht}}
\newcommand{\xb}{\bar{x}}
\newcommand{\rb}{\bar{r}}

\newcommand{\bh}{\hat{B}}
\newcommand{\bhy}{\hat{B}^{(1)}}
\newcommand{\bhe}{\hat{B}^{(2)}}

\newcommand{\eh}{\hat{e}}
\newcommand{\lh}{\hat{\lmd}}

\newcommand{\wg}{\wedge}

\newcommand{\e}{\mathbf{e}}

\newcommand{\uh}{\mathfrak{U}}

\begin{document}

\title[Index Theory for Parabolic Solutions]{An Index Theory for Zero Energy Solutions of the Planar Anisotropic Kepler Problem}

\author{Xijun Hu}
\address{Department of Mathematics, Shandong University, P.R. China}
\email{xjhu@sdu.edu.cn}

\author{Guowei Yu}
\address{Dipartimento di Matematica ``Giuseppe Peano'', Universit\`a degli Studi di Torino, Italy}
\email{guowei.yu@unito.it}

\thanks{Both authors thank the support of NSFC(No.11425105). The second author also acknowledges the support of Fondation Sciences Math\'ematiques de Paris and the ERC Advanced Grant 2013 No.  339958 ``Complex Patterns for Strongly Interacting Dynamical Systems - COMPAT''}.

  
\date{}

\begin{abstract}
In the variational study of singular Lagrange systems, the zero energy solutions play an important role. In this paper we find a simple way of computing the Morse indices of these solutions for the planar anisotropic Kepler problem. In particular an interesting connection between the Morse indices and the oscillating behaviors of these solutions discovered by the physicist M. Gutzwiller is established.      
\end{abstract}

\maketitle

\bigskip

\noindent{\bf AMS Subject Classification:}  70F16, 70F10, 37J45, 53D12

\bigskip

\noindent{\bf Key Words:} anisotropic Kepler problem, celestial mechanics, zero energy solutions,  index theory

\section{Introduction}

Lagrangian systems with singular potentials have been studied by many authors due to their connection with celestial mechanics and relevant problems in physics, see \cite{BR89}, \cite{BR91}, \cite{AZ93}, \cite{AZ94}, \cite{Tn93a} and the references within. In this paper, we study the $2$-dimension singular Lagrangian system
\begin{equation} \ddot{x}(t)= \nabla U(x(t)), \;\; x(t)=(x_1(t), x_2(t)) \in \rr^2,  \label{eq: Lag equation}  \end{equation}
with $U$ being a positive, $(-\al)$-homogeneous potential for some $0< \al <2$, i.e.
\begin{equation}
\label{eq: U}  U(x) = \frac{\uh(x/|x|)}{|x|^{\al}}, \;\; \text{where } \uh \in C^2(\mathbb{S}^1, (0, +\infty)) \text{ and } \mathbb{S}^1 := [-\pi, \pi]/\{\pm \pi\}.
\end{equation}

This can be seen as a generalization of the planar anisotropic Kepler problem, introduced by physicist Gutzwiller (\cite{Gz73}, \cite{Gz77}) and further studied by Devaney (\cite{Dv78}, \cite{Dv81}), where
\begin{equation}
\label{eq: uh anisotropic kepler} U(x) = \frac{1}{\sqrt{\mu x_1^2 + x_2^2}}, \; \text{ for some } \mu >1.
\end{equation}
Such a potential describes the motion of an electron in a semiconductor by an impurity of the donor type and reveals the connection between chaotic behaviors in classic and quantum mechanics. The general case we are considering also applies to the Kepler problem and the isosceles three body problem, see Section \ref{sec: application}.

Solutions of \eqref{eq: Lag equation} are critical points of the action functional 
\begin{equation} \mathcal{F}(x; t_1, t_2):=\int_{t_1}^{t_2} L(x(t), \xd(t)) dt, \;\; x \in W^{1,2}([t_0,t_1],\mathbb{R}^2 \setminus \{0 \} )\end{equation}
where the Lagrangian
$$ L(x, \dot{x})= K(\xd)+ U(x) = \ey|\xd|^2 +U(x). $$
The corresponding Hamiltonian $H(\xd(t), x(t)) =K(\xd(t))-U(x(t))$ represents the total energy and is a constant along a solution. Under polar coordinates 
$$ x=(x_1, x_2)=(r\cos \theta, r\sin\theta), \;\;  (r, \tht) \in [0, +\infty) \times \mathbb{S}^1, $$
$\uh$ depends only on $\tht$ and the Lagrangian becomes
\begin{equation}
\label{eq: Lagrangian}  L(r, \tht, \dot{r}, \dot{\tht})= K+U=\frac{1}{2}(\dot{r}^2+r^2\dot{\theta}^2)+ \frac{\uh(\theta)}{r^{\al}}. 
\end{equation}

Let $x(t) \in \rr^2 \setminus \{0\}, t \in (T^-, T^+) \subset \rr \cup \{\pm\infty\}$, be a solution of \eqref{eq: Lag equation}, we are mainly interested in the following three types of solutions. 
\begin{defi}
\label{def: parabolic}
$x(t)$ will be called a \textbf{parabolic} solution, if 
\begin{enumerate}
\item[(i).] $T^{\pm} =\pm \infty$, $\lim_{t \to T^{\pm}}|x(t)| = +\infty$ and $\lim_{t \to T^{\pm}}|\xd(t)|=0$, 
\end{enumerate}
a \textbf{collision-parabolic} solution, if 
\begin{enumerate}
\item[(ii).] $T^- \in \rr$ and $x(T^-)=\lim_{t \to T^-}x(t)=0$;
\item[(iii).] $T^+= +\infty$, $\lim_{t \to T^+} |x(t)| = +\infty$ and $\lim_{t \to T^+}|\xd(t)| = 0$, 
\end{enumerate}
and a \textbf{parabolic-collision} solution, if 
\begin{enumerate}
\item[(iv).] $T^+ \in \rr$ and $x(T^+)= \lim_{t \to T^+}x(t)=0$;
\item[(v).] $T^-=-\infty$, $\lim_{t \to T^-} |x(t)| = +\infty$ and $\lim_{t \to T^-}|\xd(t)| = 0$.
\end{enumerate}
If $\tht(t) \equiv \text{Constant}$, $ \forall t \in (T^-, T^+)$, we also call $x(t)$  a \textbf{homothetic} solution. 
\end{defi}
Notice that a parabolic solution is always non-homothetic, as a homothetic solution must collide with the origin at a finite time in the future or past. Clearly all the solutions introduced in Definition \ref{def: parabolic} must have zero energy. Meanwhile the reverse statement is also true under some non-degenerate condition. 

\begin{thm} [Devaney \cite{Dv81}]
\label{thm: zero energy} If the critical points of $\uh$ are isolated in $\mathbb{S}^1$, then each zero energy solution $x(t)$, $t \in (T^-, T^+)$, must be one of the three types of solutions defined in Definition \ref{def: parabolic}. Furthermore in polar coordinates $x(t)=(r\cos \tht, r \sin \tht)(t)$, $\tht(t)$ converges to some critical points of $\uh$, as $t$ goes to $T^{\pm}$.
\end{thm}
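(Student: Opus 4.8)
The plan is to desingularize the origin by a McGehee-type change of variables together with a reparametrization of time, so that every zero energy solution becomes an orbit of a smooth \emph{complete} flow on a compact manifold, and then to read off the behaviour as $t\to T^{\pm}$ from the limit sets of that flow.

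Concretely, I would pass to polar coordinates $(r,\tht)$, introduce the scaled velocities $v:=r^{\al/2}\rd$ and $w:=r^{1+\al/2}\thd$, and a rescaled time $\tau$ with $dt/d\tau=r^{1+\al/2}$. A direct computation from the Euler--Lagrange equations of \eqref{eq: Lagrangian} gives, writing ${}'=d/d\tau$,
\begin{equation*}
r'=rv,\qquad \tht'=w,\qquad v'=\tfrac{\al}{2}\,v^{2}+w^{2}-\al\,\uh(\tht),\qquad w'=-\tfrac{2-\al}{2}\,vw+\uh'(\tht).
\end{equation*}
Because the energy vanishes, multiplying the relation $H=0$ by $2r^{\al}$ yields the first integral $v^{2}+w^{2}=2\uh(\tht)$; since $\uh$ takes values in a compact subset of $(0,+\infty)$, the set
\begin{equation*}
\mc:=\{(\tht,v,w)\in\mathbb{S}^{1}\times\rr^{2}\,:\,v^{2}+w^{2}=2\uh(\tht)\}
\end{equation*}
is a compact surface, the $(\tht,v,w)$-flow restricts to $\mc$, and — as $\uh\in C^{2}$ makes the vector field $C^{1}$ and $\mc$ is compact — that restricted flow is complete. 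Every zero energy solution of \eqref{eq: Lag equation}, after the time change, becomes an orbit on $\mc$, with $r$ recovered by $r(\tau)=r(\tau_{0})\exp\!\int_{\tau_{0}}^{\tau}v$; a short argument from the maximality of $(T^{-},T^{+})$ — if the rescaled time stayed in a bounded interval, then $r$ would remain in a compact subset of $(0,+\infty)$ and $x(t)$ would extend past $T^{\pm}$ as a solution — shows the rescaled orbit is defined for all $\tau\in\rr$ and is exactly a complete orbit on $\mc$.

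The crux is that this flow on $\mc$ is gradient-like: on $\mc$ one computes $v'=\tfrac{2-\al}{2}\,w^{2}\ge 0$, which is the blow-up of the Lagrange--Jacobi identity $\ddot I=(2-\al)U>0$ for $I=\ey|x|^{2}$, and $v$ is constant along an orbit only when $w\equiv 0$, which forces $\tht$ constant and $\uh'(\tht)=0$, i.e. the orbit is an equilibrium. The equilibria are precisely the points $(\tht^{*},\pm\sqrt{2\uh(\tht^{*})},0)$ with $\uh'(\tht^{*})=0$, i.e. the homothetic solutions; by the hypothesis that the critical points of $\uh$ are isolated in $\mathbb{S}^{1}$ (hence finite, by compactness) these equilibria are isolated. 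Now the standard theory of gradient-like flows applies: on the compact manifold $\mc$, the $\al$- and $\om$-limit set of each orbit is non-empty, connected, invariant, and contained in $\{v'=0\}$, hence in the finite equilibrium set, so each orbit converges as $\tau\to+\infty$ to a single equilibrium and as $\tau\to-\infty$ to a single one, the two being distinct whenever the orbit is not itself an equilibrium (there $v$ is strictly increasing). In particular $\tht(\tau)\to\tht^{*}_{\pm}$, a critical point of $\uh$, as $\tau\to\pm\infty$, and the limits $v_{\pm}$ of $v$ equal $\pm\sqrt{2\uh(\tht^{*}_{\pm})}\ne 0$.

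Finally I would read the conclusion off in the original time. Since $v$ is monotone with nonzero limits $v_{\pm}$, it has constant sign near $\tau=\pm\infty$ equal to the sign of $v_{\pm}$, and from $r(\tau)=r(\tau_{0})\exp\!\int_{\tau_{0}}^{\tau}v$ one obtains at $+\infty$: if $v_{+}>0$ then $r\to+\infty$, $r^{1+\al/2}$ is non-integrable in $\tau$ at $+\infty$, hence $T^{+}=+\infty$ and $|\xd|^{2}=2\uh(\tht)/r^{\al}\to0$; if $v_{+}<0$ then $r\to0$ exponentially, $r^{1+\al/2}$ is integrable at $+\infty$, hence $T^{+}<\infty$ and $x(T^{+})=0$; and symmetrically at $-\infty$. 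Because $v_{-}\le v_{+}$, the pattern $v_{-}>0,\,v_{+}<0$ cannot occur, so only three sign patterns remain, and they match exactly the three types: $v_{-}<0<v_{+}$ is the parabolic solution, $0<v_{-}\le v_{+}$ is the collision--parabolic solution, $v_{-}\le v_{+}<0$ is the parabolic--collision solution; in all cases $\tht(t)=\tht(\tau(t))\to\tht^{*}_{\pm}$ as $t\to T^{\pm}$. The main obstacle is less any single estimate than the bookkeeping between the two time variables — above all establishing completeness of the rescaled orbit and correctly matching the sign patterns of $(v_{-},v_{+})$ with the finiteness of $T^{\pm}$ and with $r\to0$ versus $r\to+\infty$.
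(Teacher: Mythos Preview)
Your proof is correct and follows essentially the same route as the paper: the McGehee blow-up $(v,w,r,\theta)$ with rescaled time $\tau$, the zero-energy constraint $v^{2}+w^{2}=2\uh(\theta)$ defining a compact invariant surface, the Lyapunov function $v$ with $v'=\tfrac{2-\al}{2}w^{2}$, and the classification of the three heteroclinic types by the signs of $v_{\pm}$ are exactly what the paper sets up in Section~2 (Lemmas~2.1--2.4, Corollary~2.1, Proposition~2.1), attributing the argument to Devaney. The paper uses $u$ for your $w$ and introduces the angle $\psi$ via $(\cos\psi,\sin\psi)=(u,v)/\sqrt{2\uh(\theta)}$ to parametrize the collision torus, but the content is identical.
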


We find these zero energy solutions interesting due to the following reasons: first under McGehee coordinates (see \cite{Mg74}, \cite{Dv81}, \cite{Mk81} or Section \ref{sec: McGehee coordinates}), their projections on the collision manifold (obtained after blowing up the singularity at the origin) become equilibria and heteroclinic orbits between these equilibria, which means they may be used to build up complex trajectories, see \cite{Mk89} and \cite{MM15}; second, in \cite{BTV}, \cite{BTV14} and \cite{LM14}, the existence/absence of parabolic solutions are shown to be connected with the absence/existence of collision in the action minimizers of the Bolza problem (fixed-end); third, in the variational study of the singular Lagrange systems, they are usually what one gets after the \emph{blow-up} argument(\cite{Tn93a}, \cite{FT04}) and play a key role in proving the absence of collision in the corresponding critical points.

The main novelty of our paper is to study these solutions from an index theory point of view. To be precise, given a zero energy solution $x(t)$, $t \in (T^-, T^+)$, we define its \textbf{Morse index} as 
\begin{equation}
\label{dfn: morse index}  m^-(x)=\lim_{n \to \infty}m^-(x; t^-_n, t^+_n). 
\end{equation}
where $T^-< t^-_n < t^+_n < T^+$ satisfies $\lim_{n \to +\infty} t^{\pm}_n= T^{\pm}$. For any $t_1<t_2$, $m^-(x; t_1, t_2)$ is the dimension of the largest subspace of $W_0^{1,2}([t_1, t_2], \rr^2 \setminus \{0\})$, where the second derivative $d^2\mathcal{F}(x; t_1, t_2) <0$. By the monotone property in \cite{CH},  
\begin{equation} m^-(x; t_1,t_2)\leq m^-(x; t^*_1,t^*_2), \; \text{ if } \; t^*_1\leq t_1, t_2\leq t^*_2. \lb{4.1} \end{equation}
Hence $m^-(x)$ is well defined and independent of the choice of $t^{\pm}_n$.

The computation of Morse index is not an easy job, especially along the directions that are not orthogonal to the solution. Our result gives a simple way of computing the Morse index of a zero energy solution, and quite interestingly it is connected with the oscillating behavior of the solution discovered numerically by Gutzwiller and proven analytically by Devaney: 

{\em Let $\al=1$ and $\uh(\tht)=(\mu \cos^2 \tht + \sin^2 \tht)^{-\frac{1}{2}}$ with $\mu >1$, then $\{-\pi/2, 0, \pi/2, \pi\}$ are the critical points of $\uh$. If $x(t)=(r\cos \tht, r \sin \tht)(t)$ is a collision solution of \eqref{eq: Lag equation} with $x(0)=0$ (not necessarily with zero energy), as $t \to 0$, $\tht(t)$ converges to one of the critical point. When the the critical point belongs to $\{\pm \pi/2\}$, then when $\mu > 9/8$, the corresponding trajectory in $\rr^2$ oscillates along the vertical axis $\{x_1 \equiv 0\}$, as it approaches to the origin; meanwhile when the critical point belongs to $\in \{0, \pi\}$, then such oscillating behavior does not exist along the horizontal axis $\{x_2 \equiv 0\}$. See Figure \ref{fig: osc} for corresponding numerically simulations, where the corresponding graphs of the function $\tht(\tau)$ are given ($\tau$ is a new time parameter that will be given later). This may also be seen from the phase portrait given in Figure \ref{fig: phase}.}

\begin{figure} 
    \centering
     \begin{subfigure}[b]{0.48\textwidth}
      \includegraphics[width=\textwidth]{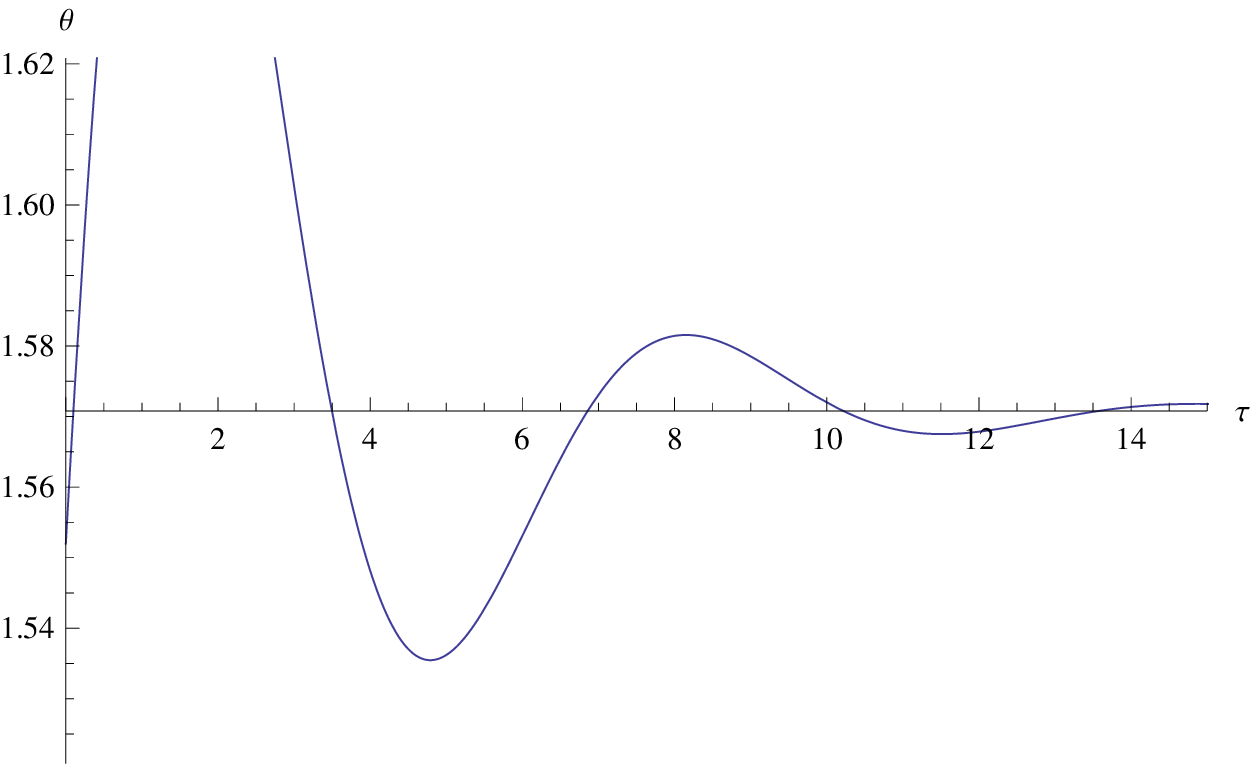}
        \caption{$\tht(t) \to \pi/2$}
    \end{subfigure}
    ~
    \begin{subfigure}[b]{0.48\textwidth}
        \includegraphics[width=\textwidth]{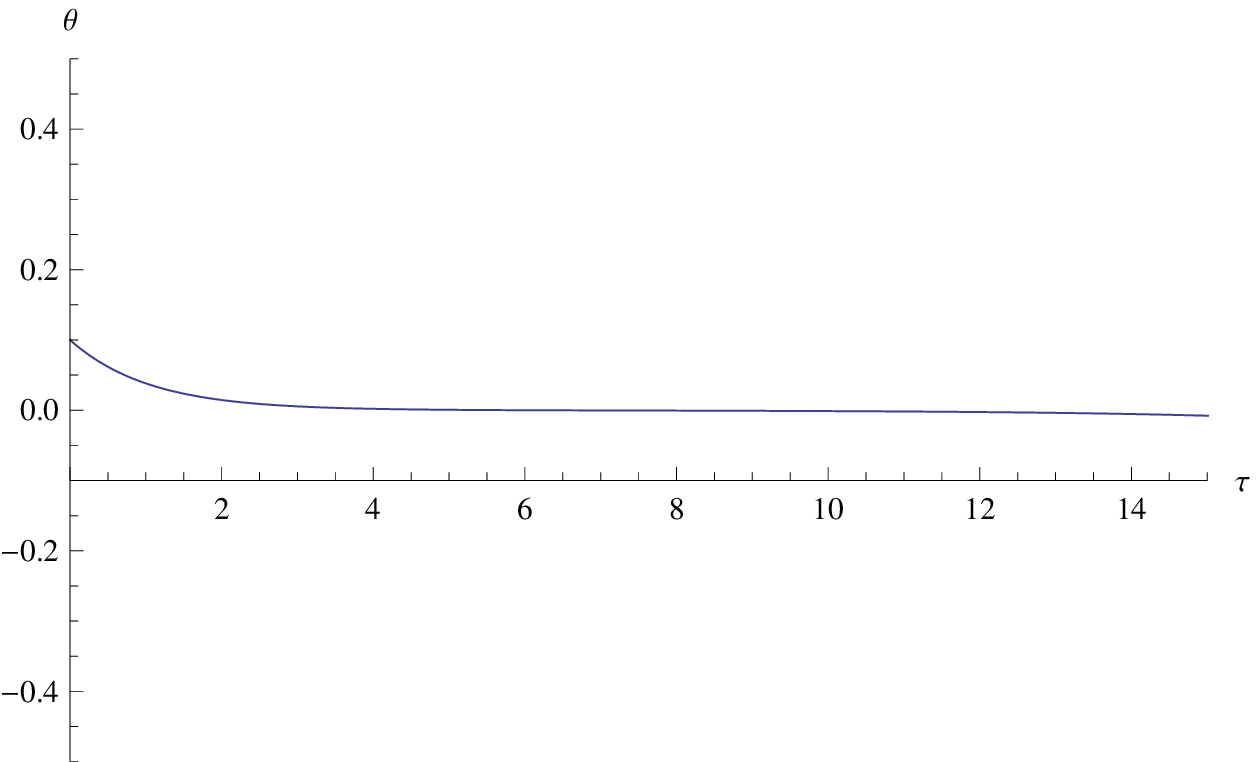}
        \caption{$\tht(t) \to 0$}
    \end{subfigure}
    \caption{}
    \label{fig: osc}
\end{figure}

Inspired by the above phenomena, we call $i(x)$ the \textbf{oscillation index} of $x(t)$:
\begin{equation} \label{dfn: osc index} i(x):= \begin{cases}
{\#}\{t \in (T^-, T^+)| \; \dot{\theta}(t)=0\}, \; & \text{if } x(t)  \text{ is non-homothetic}, \\
0, \; & \text{if } x(t) \text{ is homothetic}. 
\end{cases}
\end{equation}  
\begin{rem}
If $x(t)$ is homothetic, $\dot{\tht}(t)\equiv 0$, $\forall t$, so there is no oscillation at all. Meanwhile if $x(t)$ is non-homothetic, by Remark \ref{rmk:isolated time}, $\{t \in (T^-, T^+)| \; \dot{\theta}(t)=0\}$ is isolated in $(T^-, T^+)$.
\end{rem}

\begin{thm}\label{thm: osc morse} Let $x(t)= (r \cos \tht, r \sin \tht)(t)$, $t \in (T^-, T^+)$, be a non-homothetic zero energy solution of \eqref{eq: Lag equation} with $\lim_{t \to T^{\pm}} \tht(t) = \tht^{\pm}_0$. Then $\tht_0^{\pm}$ are critical points of $\uh$. Moreover when both of them are are non-degenerate, i.e. $\uh_{\tht \tht}(\tht_0^{\pm})\ne 0$, then 
\begin{enumerate}
\item[(a).] if at least one of $\Delta(\tht_0^{\pm})$ is negative, then $m^-(x)=i(x)= +\infty$, where 
\begin{equation}
\label{eq: delta} \Delta(\tht) := \frac{(2-\al)^2}{2}\uh(\tht) +4 \uh_{\tht \tht}(\tht), \;\; \forall \tht \in \mathbb{S}^1,
\end{equation}
\item[(b).] if both $\Delta(\tht_0^{\pm})$ are positive, then $ m^-(x)-i(x) = 0 \text{ or } 1,$ and in particular, $m^-(x)-i(x)=0$, when $\tht_0^-$ is a local minimizer of $\uh$. 
\end{enumerate}
\end{thm}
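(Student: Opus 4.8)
The plan is to reduce the computation of the Morse index to a Morse–Sturm / Maslov-type index count for a path of Lagrangian subspaces, by exploiting the zero-energy structure and McGehee-type coordinates. First I would pass to the new time parameter $\tau$ (the one alluded to after the oscillation phenomenon) under which the flow on the collision manifold becomes a gradient-like flow and the solution $x(t)$ becomes, after blow-up, a heteroclinic orbit connecting the equilibria determined by $\tht_0^{\pm}$. In these coordinates the linearized second-variation operator $d^2\mathcal{F}(x; t_1,t_2)$ along $x$ becomes (up to a positive conjugation and a Legendre-positive leading term) a Morse–Sturm system on a finite interval $[\tau_1,\tau_2]$, so that $m^-(x; t_1,t_2)$ equals the number of conjugate points (with multiplicity) of the corresponding Lagrangian path in $[\tau_1,\tau_2]$. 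Because $x$ has zero energy, the radial and angular directions decouple at leading order near the ends: the radial (homothetic) direction carries the ``$(2-\al)^2/2\,\uh$'' part and the angular direction carries the ``$4\uh_{\tht\tht}$'' part, and their interaction is exactly encoded by the quantity $\Delta(\tht)$ in \eqref{eq: delta}.

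The second step is the asymptotic analysis at $\tau \to \pm\infty$. Near an equilibrium corresponding to a non-degenerate critical point $\tht_0^{\pm}$, the linearized system is asymptotically autonomous with a hyperbolic (or partially hyperbolic) limiting coefficient matrix whose eigenvalues I would compute explicitly in terms of $\uh(\tht_0^{\pm})$, $\uh_{\tht\tht}(\tht_0^{\pm})$ and $\al$; the discriminant of the relevant quadratic is precisely (a positive multiple of) $\Delta(\tht_0^{\pm})$. When $\Delta(\tht_0^{\pm})<0$, the angular part of the limiting system has complex eigenvalues, i.e. it is of focus type, so the linearized angular solutions oscillate infinitely often as $\tau \to \pm\infty$; by a Sturm comparison argument this forces infinitely many conjugate points, hence $m^-(x)=+\infty$, and simultaneously forces $\thd(t)$ to change sign infinitely often, hence $i(x)=+\infty$. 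This proves part (a). When both $\Delta(\tht_0^{\pm})>0$, the limiting systems are of saddle/node type (real eigenvalues), the heteroclinic converges monotonically along a definite direction, and both $m^-(x)$ and $i(x)$ are finite; the bulk of the work is then to show the difference is $0$ or $1$.

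For part (b), the key idea is that $\thd(t)=0$ detects exactly the ``verticality'' of the tangent line $d\tht/d\tau$, and $\dot{\tht}$ itself (more precisely a suitably rescaled version of it) is a solution of the linearized Jacobi equation along $x$ — this is the standard fact that the derivative of a one-parameter family of solutions solves the variational equation; here the relevant family comes from the scaling/time-translation symmetry combined with rotation, but the zero-energy constraint cuts the relevant symmetry group down so that essentially one Jacobi field, call it $\eta(t)$, is proportional to $r(t)\dot{\tht}(t)$ up to known factors. A Sturm-type oscillation theorem then says that the conjugate points of the path interlace with the zeros of a basis of Jacobi fields; choosing the basis adapted to the decoupling at $\tau=-\infty$ (so that one member is asymptotic to the stable angular direction there), the zeros of that member are exactly the zeros of $\dot\tht$, giving $i(x)$ of the conjugate points, while the remaining conjugate points (coming from the radial direction, which under $\Delta>0$ contributes no interior zero) can contribute at most one more. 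The ``$0$ when $\tht_0^-$ is a local minimizer'' refinement comes from checking the sign/Morse-index contribution of the endpoint at $-\infty$: at a local minimizer of $\uh$ the stable subspace of the limiting angular system has the ``right'' orientation, so no extra conjugate point is shed near $\tau=-\infty$, whereas at a local maximizer it may be, accounting for the possible $+1$.

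The main obstacle I expect is making the passage to $\tau=\pm\infty$ rigorous at the level of the index: $m^-(x; t_1,t_2)$ is defined on a compact interval, and one must show that $m^-(x)=\lim_n m^-(x;t_n^-,t_n^+)$ is finite (in case (b)) and correctly captured by the relative Maslov index of the truncated Lagrangian path between the stable subspaces at the two ends. This requires (i) a uniform control, via the monotonicity \eqref{4.1} together with the hyperbolic-type estimates near the equilibria, showing no conjugate points escape to $\pm\infty$; and (ii) identifying the ``boundary'' Maslov contributions at the two non-degenerate equilibria with the combinatorial data ($\Delta$ sign, min vs.\ max). Handling the partially degenerate limiting system — the radial direction at zero energy has a genuinely non-hyperbolic eigenvalue ($0$, coming from the scaling direction) — is the delicate point, and I would treat it by the usual device of working on the energy surface / quotienting out the scaling direction before counting, so that the remaining reduced system is genuinely hyperbolic when $\Delta(\tht_0^{\pm})\neq 0$.
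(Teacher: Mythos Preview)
Your broad strategy matches the paper's: pass to McGehee time $\tau$, view $x$ as a heteroclinic on the collision manifold, analyze the linearized system asymptotically at the equilibria, and compute $m^-(x)$ as a Maslov index. Your treatment of part~(a) is essentially the paper's (Proposition~\ref{prop: non-hyper index}): complex eigenvalues in the angular block when $\Delta<0$ force infinitely many crossings both for the Morse count and for $\dot\tht$.

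For part~(b), however, two concrete points in your plan diverge from what the paper actually does, and the first is a genuine gap. You propose to handle the non-hyperbolic ``scaling'' eigenvalue by quotienting out the radial direction on the energy surface. The paper instead removes this degeneracy by a $\tau$-dependent \emph{symplectic rescaling} $R(\tau)=\mathrm{diag}(r^{\frac12+\frac\al4},r^{\frac\al4-\frac12},r^{-\frac12-\frac\al4},r^{\frac12-\frac\al4})$, after which the limiting matrices $J\hat B_\pm$ are genuinely hyperbolic on the full $4$-dimensional space whenever $\Delta(\tht_0^\pm)>0$ (Lemma~\ref{lem: B hat 1}). This is what makes the stable/unstable subspace paths $V^\pm(\tau)$ well-defined and convergent, and hence makes $\mu(x)=\mu(V_d,V^-(\tau);\mathbb R)$ meaningful and equal to $m^-(x)$ (Theorem~\ref{thm4.1}). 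A quotient construction would lose the symplectic structure you need for the Maslov count.

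Second, your mechanism linking $i(x)$ to the index---``one Jacobi field proportional to $r\dot\tht$, Sturm interlacing of conjugate points with its zeros''---is not sharp enough in two degrees of freedom: conjugate points are intersections of a Lagrangian plane with $V_d$, not zeros of a single vector. The paper builds \emph{two} explicit Jacobi fields $\eta_1,\eta_2$ from the time-translation and scaling symmetries (equations \eqref{eq: eta 1}--\eqref{eq: eta 2}), shows that $V(\tau)=\mathrm{span}\{\eta_1,\eta_2\}$ is Lagrangian precisely because the energy is zero (Lemma~\ref{lem: V Lag}), and then proves directly that $V(\tau)\cap V_d\neq 0$ iff $u(\tau)=0$ iff $\dot\tht=0$, so $i(x)=\mu(V_d,V(\tau);\mathbb R)$ exactly (Theorem~\ref{thm4.2}). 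The difference $m^-(x)-i(x)=\mu(V_d,V^-(\tau);\mathbb R)-\mu(V_d,V(\tau);\mathbb R)$ is then a H\"ormander index computed from the endpoint limits $V(-\infty),V^-(-\infty)$ (Lemma~\ref{lem4.2}); which of $\langle e_\pm(\psi_0^-,\tht_0^-)\rangle$ the heteroclinic approaches along determines whether $V(-\infty)=V^-(-\infty)$ (giving $0$) or not (giving $0$ or $1$). At a local minimizer $\tht_0^-$ the equilibrium is a saddle, so the orbit necessarily approaches along $\langle e_+\rangle$, forcing $V(\tau)\equiv V^-(\tau)$ and hence $m^-(x)=i(x)$. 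This replaces your ``orientation of the stable subspace'' heuristic with an exact identity.
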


Because of degeneracy, Theorem \ref{thm: osc morse} does not hold for homothetic solutions. Instead we have the following result. 

\begin{thm}
\label{thm: homothtic} Let $\bar{x}(t)=\rb (t)(\cos \tht_0, \sin \tht_0)$ be a homothetic zero energy solution of \eqref{eq: Lag equation}, where $\tht_0$ is a critical point of $\uh$, then
\begin{enumerate}
\item[(a).] if $\Delta(\tht_0) < 0$, $m^-(\bar{x})= +\infty$,
\item[(b).] if $\Delta(\tht_0) \ge 0$, $m^-(\bar{x})=0$.
\end{enumerate}
\end{thm}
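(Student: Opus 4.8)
The plan is to reduce the Morse index of the homothetic solution $\bar x$ to a Sturm--Liouville eigenvalue problem on a half-line (or on a sequence of growing intervals), obtained by blowing up the singularity. First I would introduce the McGehee-type time rescaling and the logarithmic coordinate $s$ so that the homothetic orbit $\bar r(t)$ becomes an explicit self-similar profile; along a $(-\al)$-homogeneous potential one knows $\bar r(t) \sim c\,t^{2/(2-\al)}$, so after the change of variables the second variation $d^2\mathcal F(\bar x; t_1,t_2)$ decouples, in a suitable trivialization, into a radial part and an angular part. The radial part, being the second variation of a one-dimensional problem along its own homothetic solution, is nonnegative (this is the classical fact that homothetic solutions of homogeneous potentials minimize in the radial direction, essentially because they are ``parabolic'' in the $r$-variable and the fixed-end radial action is convex); so it contributes $0$ to the index in the limit. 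All the index therefore comes from the angular variation, i.e. from the operator $-\frac{d^2}{ds^2} - q(s)$ acting on $W^{1,2}_0$ of the rescaled interval, where the potential $q$ is, up to the positive factor coming from $\bar r$, proportional to $\uh(\tht_0)$ plus $\uh_{\tht\tht}(\tht_0)$; tracking the constants from the homogeneity degree $\al$ is exactly what produces the combination $\Delta(\tht_0)=\tfrac{(2-\al)^2}{2}\uh(\tht_0)+4\uh_{\tht\tht}(\tht_0)$.

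Second, I would analyze this angular Sturm--Liouville operator. After the logarithmic time change the rescaled interval becomes $(\sigma_1,\sigma_2)$ with $\sigma_2-\sigma_1\to\infty$ as $t_1\to 0^+$ or $t_2\to+\infty$, and the coefficient $q(s)$ tends to a constant multiple of $\Delta(\tht_0)$. So the question ``how many negative eigenvalues does $-\frac{d^2}{ds^2}-q$ have on a long interval with Dirichlet conditions, in the limit of infinite length'' has the classical dichotomy: if the limiting constant is positive (i.e. $\Delta(\tht_0)<0$, since $q$ carries a sign flip, or more directly: the operator $-\partial_s^2 + c\,\tht$-coefficient is unstable) the number of negative directions grows without bound, giving $m^-(\bar x)=+\infty$; if it is $\le 0$ the operator is nonnegative on the whole line by a Hardy/Wirtinger-type inequality on each finite subinterval, so $m^-(\bar x)=0$. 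The borderline case $\Delta(\tht_0)=0$ must be handled with a little care — one checks the relevant quadratic form is still $\ge 0$ on every $W^{1,2}_0$ of a bounded interval, which it is because the constant-coefficient form $\int \dot u^2 - c u^2$ with the critical $c$ is exactly nonnegative on $W^{1,2}_0$ of any interval (the Dirichlet first eigenvalue of $-\partial_s^2$ on a bounded interval is strictly positive), and one controls the error term between $q$ and its limiting constant, which decays exponentially in $s$, by a perturbation argument.

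The main obstacle I anticipate is the decoupling step: a priori the angular and radial variations are coupled through the cross terms $r\dot r \dot\theta$ and the $\theta$-dependence of $\uh$, and along the homothetic solution $\dot\theta\equiv 0$ kills some but not all of these. I would handle this by working in the symplectic/Lagrangian-path framework already set up in the paper (cf. the index machinery behind \eqref{4.1} and \cite{CH}), writing $d^2\mathcal F$ as a path of quadratic forms and block-diagonalizing it using that the linearization of \eqref{eq: Lag equation} along a homothetic solution is itself invariant under the $\rr$-action; the radial block is then manifestly the one-dimensional homothetic problem and its index vanishes, leaving the angular block to which the Sturm--Liouville analysis above applies. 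A secondary technical point is justifying that the limit defining $m^-(\bar x)$ in \eqref{dfn: morse index} commutes with these reductions, but this is immediate from the monotonicity \eqref{4.1} once the two blocks are identified. Assembling these pieces gives (a) and (b).
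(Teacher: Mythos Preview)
Your overall strategy---decouple the second variation into radial and angular blocks, show the radial block contributes nothing, and reduce the angular block to a constant-coefficient Sturm--Liouville problem whose sign is governed by $\Delta(\tht_0)$---is exactly the paper's approach. But you are making two steps harder than they are, and there is a slip.

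First, the decoupling you flag as ``the main obstacle'' is immediate and exact: along the homothetic solution one has $\dot\tht\equiv 0$ \emph{and} $\uh_\tht(\tht_0)=0$, and a direct computation of the Hessian of $L(r,\tht,\dot r,\dot\tht)$ shows every mixed second partial between the $(r,\dot r)$ and $(\tht,\dot\tht)$ groups vanishes at $(\bar r,\tht_0)$. No symplectic block-diagonalization is needed; the second variation is already block-diagonal. The radial block is then nonnegative by the minimizing property of the homothetic profile (the paper cites \cite{LM14}), and one is left with the angular quadratic form $\int \bar r^2\dot\phi^2 + \bar r^{-\al}\uh_{\tht\tht}(\tht_0)\phi^2\,dt$.

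Second, after the time change $d\tau = \bar r^{-1-\al/2}\,dt$ the paper uses the substitution $\xi = \bar r^{(2-\al)/4}\phi$; since $\bar r'/\bar r$ is the \emph{constant} $\sqrt{2\uh(\tht_0)}$, an integration by parts turns the angular form into exactly $\int (\xi')^2 + \tfrac14\Delta(\tht_0)\,\xi^2\,d\tau$, with no error term at all. So your perturbation argument for the borderline and near-borderline cases is unnecessary: the form is literally constant-coefficient, and both (a) and (b) follow at once.

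Finally, a slip: the zero-energy homothetic profile is $\bar r(t)=(\kappa t)^{2/(2+\al)}$, not $t^{2/(2-\al)}$.
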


\begin{rem}
\begin{enumerate}
\item Each critical point $\tht_0$ of $\uh$ corresponds to two equilibria on the collision manifold and the sign of $\Delta(\tht_0)$ is related to the spectra of the linearized vector field at those equilibria: when $\Delta(\tht_0)<0$, $(\psi_0, \theta_0)$ is a stable $($or unstable$)$ \textbf{focus} with the nearby orbits asymptotically spiral into $($or away from$)$ $(\psi_0, \tht_0)$  $($see Section \ref{sec: McGehee coordinates}$)$. 
\item When $\Delta(\tht_0)<0$, the Morse index of a collision solution of the $N$-body problem was first investigated in \cite{BS}, where results similar to property (a) in both Theorem \ref{thm: osc morse} and \ref{thm: homothtic} were obtained.
\item Recently in \cite{BHPT} the Morse indices of both collision and complete parabolic solutions of the $N$-body problem are studied in more details. In particular the case with $\Delta(\tht_0)>0$ $($called [BS]-condition$)$ is also considered there. 
\end{enumerate}
\end{rem}

Although we require the corresponding critical points of $\uh$ to be non-degenerate in Theorem \ref{thm: osc morse}, our approach may still work even when they are not. This is important as the $N$-body problem is highly degenerate due to symmetries. As a example, the Kepler-type problem with $\uh(\tht)$ being a constant, will be considered in Section \ref{sec: Kepler}.

Theorem \ref{thm: osc morse} has the following corollary(for a proof see Section \ref{sec: Morse Maslov indices}). A related result has been obtained recently in \cite{MMM17} for the planar three body problem.
\begin{cor}\label{cor: saddle to saddle} Following the notations from Theorem \ref{thm: osc morse}, if $x(t)$ be a parabolic solution with $\tht(t)$ converges to two non-degenerate global minimizers of $\uh$, then $m^-(x)= i(x)=0$. 
\end{cor}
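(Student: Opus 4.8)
The plan is to derive the corollary directly from Theorem \ref{thm: osc morse}(b) together with the structural information coming from the variational characterization of parabolic connections between global minimizers. First I would check the hypotheses of Theorem \ref{thm: osc morse}(b): since $\tht_0^\pm$ are non-degenerate critical points of $\uh$, I need both $\Delta(\tht_0^\pm) > 0$. At a \emph{global minimizer} $\tht_0$ of $\uh$ one has $\uh(\tht_0) > 0$ (by positivity of $\uh$) and $\uh_{\tht\tht}(\tht_0) \ge 0$; non-degeneracy then gives $\uh_{\tht\tht}(\tht_0) > 0$, so $\Delta(\tht_0) = \frac{(2-\al)^2}{2}\uh(\tht_0) + 4\uh_{\tht\tht}(\tht_0) > 0$ since $0 < \al < 2$. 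Hence both endpoints satisfy the positivity condition and Theorem \ref{thm: osc morse}(b) applies: $m^-(x) - i(x) \in \{0,1\}$, and moreover $m^-(x) - i(x) = 0$ because $\tht_0^-$ is in particular a local minimizer of $\uh$. So it remains only to show $i(x) = 0$, i.e. that $\dot\tht(t) \ne 0$ for all $t \in (T^-, T^+)$.

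To show $i(x) = 0$ I would use a variational/comparison argument on the angular component. Since $x$ is a parabolic solution and $\tht^\pm_0$ are both \emph{global} minimizers of $\uh$, I expect that $\tht(t)$ is monotone and its image lies between $\tht_0^-$ and $\tht_0^+$ along a minimizing arc. The cleanest way is the following: if $\dot\tht(t_*) = 0$ for some $t_* \in (T^-, T^+)$, then by Theorem \ref{thm: zero energy} and the convergence of $\tht$ to critical points at the ends, together with the fact that zeros of $\dot\tht$ are isolated (Remark \ref{rmk:isolated time}), $\tht(t_*)$ would be a strict interior local extremum of $\tht(\cdot)$; analyzing the Euler--Lagrange equation for $\tht$ in polar coordinates — namely $\frac{d}{dt}(r^2\dot\tht) = \uh_\tht(\tht)/r^\al$ — at such a point shows $\ddot\tht(t_*)$ has the sign of $\uh_\tht(\tht(t_*))$, which forces $\tht(t_*)$ to be near a critical point of $\uh$ of the ``wrong'' type, contradicting that the only critical points reached asymptotically are the global minimizers, unless $\tht(t_*)$ itself equals a critical point — but then uniqueness of solutions of the ODE (the homothetic solution through that point) would force $x$ to be homothetic, contradicting that a parabolic solution is non-homothetic. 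Thus no such $t_*$ exists and $i(x) = 0$, whence $m^-(x) = i(x) = 0$.

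The main obstacle is the last step — ruling out $\dot\tht(t_*) = 0$ rigorously. One must be careful that an interior critical point of $\tht(\cdot)$ where $\uh_\tht \ne 0$ is genuinely impossible: at such a point the second-order information and the energy relation must be combined to see that $\tht$ would have to cross or turn at a non-critical value of $\uh$, which is inconsistent with $\tht$ staying in (and converging to the boundary of) the interval determined by two global minimizers — in particular $\uh(\tht(t))$ cannot exceed $\max\{\uh(\tht_0^-), \uh(\tht_0^+)\} = \min \uh$ if the minimizers are global with equal value, so $\tht(t)$ is pinned and strictly monotone. If the two global minimizers have different positions but $\uh$ attains its minimum only at isolated points, the argument via McGehee coordinates (Section \ref{sec: McGehee coordinates}) gives the same conclusion: the heteroclinic on the collision manifold connecting the two equilibria over global minimizers has monotone $\tht$. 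Once $i(x) = 0$ is established, the conclusion $m^-(x) = 0$ is immediate from part (b) of Theorem \ref{thm: osc morse}.
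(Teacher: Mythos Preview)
Your reduction to showing $i(x)=0$ is correct and matches the paper: checking $\Delta(\tht_0^\pm)>0$ and invoking Theorem~\ref{thm: osc morse}(b) with $\tht_0^-$ a local minimizer gives $m^-(x)=i(x)$, exactly as in the paper's one-line opening.

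The gap is in your argument for $i(x)=0$. The case split you set up does handle $\uh_\tht(\tht(t_*))=0$ correctly (uniqueness then forces $x$ to be homothetic). But your treatment of the case $\uh_\tht(\tht(t_*))\ne 0$ does not work. Knowing that $\ddot\tht(t_*)$ has the sign of $\uh_\tht(\tht(t_*))$ only tells you $\tht$ has a strict local extremum there; it gives no contradiction by itself. Your subsequent claim that ``$\uh(\tht(t))$ cannot exceed $\max\{\uh(\tht_0^-),\uh(\tht_0^+)\}=\min\uh$'' is simply false: a parabolic trajectory connecting two global minimizers will typically have $\uh(\tht(t))>\min\uh$ for all finite $t$, so nothing pins $\tht$ or forces monotonicity from that inequality. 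Your final sentence (``the heteroclinic on the collision manifold \dots\ has monotone $\tht$'') is the right statement, but you have only asserted it.

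The paper proves exactly that assertion, and the missing ingredient is the Lyapunov function $v$ in McGehee coordinates. If $\dot\tht(t_*)=0$ then $u(\tau_0)=0$, so $\psi(\tau_0)\in\{\pm\pi/2\}$ and hence $v(\tau_0)=\pm\sqrt{2\uh(\tht(\tau_0))}$ by the energy identity \eqref{eq: energy 0}. Since $v$ is non-decreasing (equation \eqref{eq: v'}) and, for a parabolic solution, $v(\pm\infty)=\pm\sqrt{2\uh(\tht_0^\pm)}$, one gets $\uh(\tht(\tau_0))\le\uh(\tht_0^+)=\min\uh$ in the case $\psi(\tau_0)=\pi/2$ (and analogously with $\tht_0^-$ for $\psi(\tau_0)=-\pi/2$). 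Thus $\tht(\tau_0)$ is itself a global minimizer, hence a critical point of $\uh$, and now your uniqueness/homothetic contradiction finishes the job. In short: it is the monotonicity of $v$, not any bound on $\uh(\tht(t))$, that forces $\tht(\tau_0)$ to be a critical point of $\uh$ whenever $\dot\tht$ vanishes.
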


The existence of parabolic solutions connecting two non-degenerate global minimizers of $\uh$ have been studied for the anisotropic Kepler problem with two degrees of freedom in \cite{BTV} and arbitrary finite degrees of freedom in \cite{BTV14}, where they are found as collision-free minimizers in the entire domain of time (under additional topological constraints in \cite{BTV}), so naturally their Morse index must be zero. Corollary \ref{cor: saddle to saddle} can be seen as a complementation of their results, as it says any parabolic solution connecting two global minimizers of $\uh$ must have zero Morse index.

We believe our result could be useful in deepening the variational study of the singular Lagrange systems including the classic $N$-body problem. In recent years, many new periodic and quasi-periodic solutions have been found as collision-free minimizers in the $N$-body problem under symmetric and/or topological constraints (see \cite{CM00}, \cite{FT04}, \cite{Ch08}, \cite{Y17}). However no result is available through minimax methods due to the problem of collision. Results from \cite{Tn93a}, \cite{BDT17} and \cite{Y17b} show that the Morse indices of zero energy solutions could be used to rule collisions in minimax approaches.

Our paper is organized as follows: Section \ref{sec: McGehee coordinates} contains a brief introduction of the McGehee coordinates; Section \ref{sec: asymptotic linear} gives the asymptotic analysis of the linear system along non-homothetic zero energy solutions, as they approach to the collision or infinity; Section \ref{sec: Morse Maslov indices}, studies the relations between various indices and contains proofs of our main results; Section \ref{sec: application} contains some applications of our results in celestial mechanics; Section \ref{sec: appendix B} gives a brief introduction of the Maslov index.

\section{McGehee coordinates and dynamics on the collision manifold}\label{sec: McGehee coordinates}

This section is an introduction to McGehee coordinates \cite{Mg74}. The results are not new and essentially due to Devaney (\cite{Dv78} and \cite{Dv81}). Their proofs either can be found in the above references or follow from direct computations, so will be omitted. 

The Hamiltonian corresponds to $L(r, \tht, \rd, \thd)$ given in \eqref{eq: Lagrangian} is 
\begin{equation}
\label{eq: Hamiltonian function} H(p_1, p_2, r, \tht)=\frac{1}{2}p_1^2+\frac{1}{2}r^{-2}p_2^2-r^{-\alpha}\uh(\theta), \; \text{ where } \; p_1=\dot{r}, p_2=r^2\dot{\theta}.
\end{equation}
Let $z=(p_1,p_2,r,\theta)^T$, the corresponding Hamiltonian system of \eqref{eq: Lag equation} is
\begin{equation} \label{eq: Hamiltonian equation}
\dot{z}=J \nabla H(z),  \; \text{ where }  J=\left( \begin{array}{cccc}0_2& -I_2 \\
I_2 & 0_2 \end{array}\right), \end{equation}
and
$$ \nabla H(z)=(p_1, r^{-2}p_2, \alpha r^{-1-\alpha}\uh(\theta)-r^{-3}p_2^2, -r^{-\alpha} \uh_{\tht}(\tht))^T . $$
  
Under the McGehee coordinates $(v, u, r, \tht)$
\begin{equation}
\label{eq: v u McGehee}  v=r^{\al/2} p_1=r^{\frac{\alpha}{2}}\dot{r},\;\; u=r^{-1+\frac{\al}{2}}p_2 =r^{1+\frac{\alpha}{2}}\dot{\theta}, 
\end{equation}
and  the new time parameter $\tau$ given by $dt = r^{1+\frac{\al}{2}} \, d\tau$, equation \eqref{eq: Hamiltonian equation} becomes
\begin{equation}
\begin{cases}
v'&=\frac{\alpha}{2}v^2+u^2-\alpha \uh(\theta), \\ 
u'&=(\frac{\alpha}{2}-1)uv+\uh_{\tht}(\theta),\\ 
r'&=rv,\\ 
\theta'&=u, 
\end{cases} \label{eq: McGehee 2} 
\end{equation}
where $'$ means $\frac{d}{d\tau}$ throughout the paper. 

The vector field now is well-defined on the singular set $\mf:=\{(v, u, r, \tht): r=0\}$. Moreover it is an invariant sub-manifold of \eqref{eq: McGehee 2}, which will be called the \emph{collision manifold}. In McGehee coordinates, the energy identity reads
\begin{equation}  \frac{1}{2}(u^2+v^2)-\uh(\theta)=r^{\alpha}H. \label{eq: energy}      \end{equation}
As a result whenever $r=0$ or $H=0$, 
\begin{equation}
\label{eq: energy 0} u^2+v^2=2\uh(\theta). 
\end{equation}
Plug this into the first equation of \eqref{eq: McGehee 2}, we get
\begin{equation} v'=(1-\frac{\alpha}{2})u^2, \label{eq: v'}  \end{equation}
so $v$ is a Lyapunov function of \eqref{eq: McGehee 2}, i.e. it is non-decreasing along any orbit.

By \eqref{eq: energy 0}, $\mf$ is a $2$-dim torus homeomorphic to $\mathbb{S}^1 \times \mathbb{S}^1$. We introduce a global coordinates $(\psi, \tht)$ with $\tht$ as above and $\psi$ as 
\begin{equation} \label{eq: uv psi}
\cos\psi = \frac{u}{\sqrt{2\uh(\theta)}},\quad \sin\psi = \frac{v}{\sqrt{2\uh(\theta)}}.  \end{equation} 
Then on $\mf$, the vector field \eqref{eq: McGehee 2} has the following expression:
\begin{equation}
\begin{cases}
\psi'&=(1-\frac{\alpha}{2}) \sqrt{2\uh(\theta)} \cos\psi - \frac{\uh_{\tht}(\theta)\sin\psi}{\sqrt{2\uh(\theta)}},\\
  \theta'&= \sqrt{2\uh(\theta)} \cos\psi. 
\end{cases} \label{eq: vector field collision mfd}
 \end{equation}
\begin{lem} \label{lem: equail} 
\begin{enumerate}
\item[(a).] $(\psi_0, \tht_0) \in \mf$ is an equilibrium of \eqref{eq: vector field collision mfd}, if and only if $\psi_0 \in \{ \pm \pi/2 \}$ and $\tht_0$ is a critical point of $\uh$; 
\item[(b).] If $(\psi, \tht)(\tau)$, $\tau \in \rr$, is a non-equilibrium solution of \eqref{eq: vector field collision mfd}, then $\{\tau \in \rr: \; \tht'(\tau)=0 \}$ is an isolated set in $\rr$. 
\end{enumerate} 
\end{lem}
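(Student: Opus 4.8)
The plan is to prove both parts by directly examining the vector field \eqref{eq: vector field collision mfd} on $\mf$, together with the uniqueness theorem for ODEs, which applies because $\uh \in C^2$ makes \eqref{eq: vector field collision mfd} a $C^1$ (hence locally Lipschitz) vector field.

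For part (a), I would set the right-hand side of \eqref{eq: vector field collision mfd} equal to zero. Since $\uh>0$, the equation $\tht'=\sqrt{2\uh(\tht_0)}\cos\psi_0=0$ forces $\cos\psi_0=0$, i.e.\ $\psi_0\in\{\pm\pi/2\}$ and $\sin\psi_0=\pm1$. Substituting $\cos\psi_0=0$ into the equation $\psi'=0$ annihilates the first term and leaves $\uh_{\tht}(\tht_0)\sin\psi_0/\sqrt{2\uh(\tht_0)}=0$, hence $\uh_{\tht}(\tht_0)=0$, i.e.\ $\tht_0$ is a critical point of $\uh$. The converse is immediate, since both components of \eqref{eq: vector field collision mfd} vanish as soon as $\cos\psi_0=0$ and $\uh_{\tht}(\tht_0)=0$.

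For part (b), let $(\psi,\tht)(\tau)$, $\tau \in \rr$, be a non-equilibrium solution and suppose $\tht'(\tau_0)=0$ for some $\tau_0$. Then $\cos\psi(\tau_0)=0$, so $\psi(\tau_0)\in\{\pm\pi/2\}$ and $\sin^2\psi(\tau_0)=1$. Differentiating the $\tht$-equation of \eqref{eq: vector field collision mfd} and evaluating at $\tau_0$, the term proportional to $\tht'(\tau_0)$ drops out, and replacing $\psi'(\tau_0)$ by its value from the $\psi$-equation of \eqref{eq: vector field collision mfd} (where the first term again vanishes since $\cos\psi(\tau_0)=0$) yields, after a short computation,
\begin{equation*}
\tht''(\tau_0)=-\sqrt{2\uh(\tht(\tau_0))}\,\sin\psi(\tau_0)\,\psi'(\tau_0)=\sin^2\psi(\tau_0)\,\uh_{\tht}(\tht(\tau_0))=\uh_{\tht}(\tht(\tau_0)).
\end{equation*}
If $\uh_{\tht}(\tht(\tau_0))\ne0$, then $\tht''(\tau_0)\ne0$, so $\tht'$ is strictly monotone near $\tau_0$ and $\tau_0$ is isolated in $\{\tau\in\rr:\tht'(\tau)=0\}$. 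If instead $\uh_{\tht}(\tht(\tau_0))=0$, then $\tht(\tau_0)$ is a critical point of $\uh$ and $\psi(\tau_0)\in\{\pm\pi/2\}$, so by part (a) the point $(\psi(\tau_0),\tht(\tau_0))$ is an equilibrium of \eqref{eq: vector field collision mfd}; uniqueness of solutions then forces $(\psi,\tht)(\tau)$ to be constant, contradicting the hypothesis. Hence every zero of $\tht'$ is isolated, which is the claim.

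I do not anticipate any serious obstacle. The only computational point needing care is the evaluation of $\tht''(\tau_0)$, where one must use both equations of \eqref{eq: vector field collision mfd} together with $\sin^2\psi(\tau_0)=1$; the one conceptual point is the observation that the degenerate case $\uh_{\tht}(\tht(\tau_0))=0$ is precisely an equilibrium of \eqref{eq: vector field collision mfd}, which a non-equilibrium orbit cannot meet by ODE uniqueness.
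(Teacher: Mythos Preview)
Your proof is correct and is precisely the direct computation the paper has in mind; the paper itself omits the proof, remarking that the results in this section are essentially due to Devaney and ``either can be found in the above references or follow from direct computations.'' Your evaluation of $\tht''(\tau_0)=\uh_{\tht}(\tht(\tau_0))$ via the two equations of \eqref{eq: vector field collision mfd} and the appeal to ODE uniqueness for the degenerate case are exactly the expected steps.
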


 
Consider the linearization of \eqref{eq: vector field collision mfd} at an equilibrium $(\psi_0, \tht_0) \in \mf$:
\begin{equation} \label{eq: linear equil}
M(\psi_0, \tht_0)=
 \left( \begin{array}{cc} 
(\frac{\al}{2}-1) \sqrt{2\uh(\tht_0)} \sin \psi_0 & -\frac{\uh_{\tht \tht}(\tht_0) \sin \psi_0}{\sqrt{2 \uh(\tht_0)}} \\
-\sqrt{2\uh(\tht_0)} \sin \psi_0 & 0
\end{array}\right).\end{equation}

\begin{nota} \label{dfn: eigen value vector}
We set $\lmd_\pm(\psi_0, \tht_0)$ as the two eigenvalues of $M(\psi_0, \tht_0)$, and $e_\pm(\psi_0, \tht_0)$  the corresponding eigenvectors. If $\lmd_{\pm}(\psi_0, \tht_0)$ are real numbers, we always assume $\lmd_-(\psi_0, \tht_0) \le \lmd_+(\psi_0, \tht_0)$. When there is no confusion, we may omit $(\psi_0, \tht_0)$ in these notations.
\end{nota}

For $\Delta(\tht_0)$ given in \eqref{eq: delta}, whenever it is negative, $\sqrt{\Delta(\tht_0)}$ should be understood as the imaginary number $i \sqrt{|\Delta(\tht_0)|}$.

\begin{lem} \label{lem: equil eigenvalue} Following the notations given as above, we have 
$$ \lmd_\pm =  -\frac{(2-\al)}{4}\sqrt{2\uh(\tht_0)} \sin \psi_0 \pm \ey \sqrt{\Delta(\tht_0)}; $$
$$ e_\pm = \left(\frac{2-\al}{4} \mp \frac{\sin \psi_0}{2} \sqrt{\frac{\Delta(\tht_0)}{2\uh(\tht_0)}}, 1 \right)^T.$$ 
Furthermore, 
\begin{enumerate}
\item[(a).] when $\uh_{\tht \tht}(\tht_0)>0$, $\Delta(\tht_0)>0$ and $ \lmd_-<0 < \lmd_+;$
\item[(b).] when $\uh_{\tht \tht}(\tht_0)=0$, $\Delta(\tht_0)>0$ and 
$$ \lmd_- < \lmd_+=0, \text{ if } \psi_0=\pi/2; \; 0 = \lmd_- < \lmd_+, \text{ if } \psi_0=-\pi/2;$$
\item[(c).] when $0> \uh_{\tht \tht}(\tht_0) > -\frac{(2-\al)^2}{8}\uh(\tht_0)$, $\Delta(\tht_0)>0$ and 
$$ \lmd_- < \lmd_+ <0, \text{ if } \psi_0=\pi/2; \; 0< \lmd_- < \lmd_+, \text{ if } \psi_0=-\pi/2;$$
\item[(d).] when $\uh_{\tht \tht}(\tht_0)= -\frac{(2-\al)^2}{8}\uh(\tht_0)$, $\Delta(\tht_0)=0$ and
$$ \lmd_- = \lmd_+<0, \text{ if } \psi_0=\pi/2; \; \lmd_- = \lmd_+>0, \text{ if } \psi_0=-\pi/2;$$ 
\item[(e).] when $\uh_{\tht \tht}(\tht_0)< -\frac{(2-\al)^2}{8}\uh(\tht_0) $, $\Delta(\tht_0)<0$ and 
$$ \Re(\lmd_{\pm}) <0, \text{ if } \psi_0=\pi/2; \; \Re(\lmd_{\pm})>0, \text{ if } \psi_0=-\pi/2. $$
\end{enumerate}
\end{lem}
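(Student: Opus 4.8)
The plan is a direct eigenvalue computation, organized around the observation (from Lemma \ref{lem: equail}(a)) that every equilibrium $(\psi_0,\tht_0)$ of \eqref{eq: vector field collision mfd} has $\psi_0\in\{\pm\pi/2\}$, so $\sin^2\psi_0=1$; I will keep the sign $\sin\psi_0=\pm1$ visible throughout, since it is exactly what distinguishes the two equilibria lying over a given critical point of $\uh$.

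First I would read off from \eqref{eq: linear equil} the two invariants of $M(\psi_0,\tht_0)$:
\[ \mathrm{tr}\,M = -\tfrac{2-\al}{2}\sqrt{2\uh(\tht_0)}\,\sin\psi_0, \qquad \det M = -\uh_{\tht\tht}(\tht_0)\sin^2\psi_0 = -\uh_{\tht\tht}(\tht_0). \]
The eigenvalues solve $\lmd^2-(\mathrm{tr}\,M)\lmd+\det M=0$, hence $\lmd_\pm=\ey\,\mathrm{tr}\,M\pm\ey\sqrt{(\mathrm{tr}\,M)^2-4\det M}$, and a one-line simplification gives $(\mathrm{tr}\,M)^2-4\det M=\tfrac{(2-\al)^2}{2}\uh(\tht_0)+4\uh_{\tht\tht}(\tht_0)=\Delta(\tht_0)$, which is precisely the claimed formula for $\lmd_\pm$, with $\sqrt{\Delta(\tht_0)}=i\sqrt{|\Delta(\tht_0)|}$ when $\Delta(\tht_0)<0$. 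For the eigenvectors, normalize the second coordinate to $1$; since $\lmd_\pm$ are eigenvalues, $M-\lmd_\pm I$ has rank $\le1$, so the second row of $(M-\lmd_\pm I)e_\pm=0$ already determines $e_\pm$, forcing the first coordinate to be $-\lmd_\pm/(\sqrt{2\uh(\tht_0)}\sin\psi_0)$; substituting the formula for $\lmd_\pm$ and using $1/\sin\psi_0=\sin\psi_0$ yields the stated $e_\pm$.

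The five sign statements (a)--(e) then follow by inspecting $\lmd_-\lmd_+=\det M=-\uh_{\tht\tht}(\tht_0)$ and $\lmd_-+\lmd_+=\mathrm{tr}\,M$ together with the sign of $\Delta(\tht_0)$. Because $0<\al<2$ and $\uh>0$, $\mathrm{tr}\,M$ has sign opposite to that of $\sin\psi_0$ --- negative when $\psi_0=\pi/2$, positive when $\psi_0=-\pi/2$ --- while $\Delta(\tht_0)=4\big(\uh_{\tht\tht}(\tht_0)+\tfrac{(2-\al)^2}{8}\uh(\tht_0)\big)$ has the sign asserted in each case, which explains the thresholds in (c)--(e). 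Concretely: if $\uh_{\tht\tht}(\tht_0)>0$ (case (a)) then $\det M<0$ and $\Delta(\tht_0)>0$, giving two real eigenvalues of opposite sign; if $\uh_{\tht\tht}(\tht_0)=0$ (case (b)) then $\det M=0$, so one eigenvalue is $0$ and the other equals $\mathrm{tr}\,M$; if $-\tfrac{(2-\al)^2}{8}\uh(\tht_0)<\uh_{\tht\tht}(\tht_0)<0$ (case (c)) then $\det M>0$ and $\Delta(\tht_0)>0$, so the two distinct real eigenvalues share the sign of $\mathrm{tr}\,M$; at $\Delta(\tht_0)=0$ (case (d)) one has $\lmd_-=\lmd_+=\ey\,\mathrm{tr}\,M$; and for $\Delta(\tht_0)<0$ (case (e)) the eigenvalues are complex conjugates with $\Re\lmd_\pm=\ey\,\mathrm{tr}\,M$. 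In every case the dichotomy $\psi_0=\pi/2$ versus $\psi_0=-\pi/2$ reduces to the sign of $\mathrm{tr}\,M$.

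There is no genuine obstacle in this lemma; the only care needed is the bookkeeping of signs --- in particular noticing that $\lmd_\pm$ and $e_\pm$ see $\sin\psi_0$ itself, so the two equilibria over a single critical point are truly different, whereas $\det M$ sees only $\sin^2\psi_0$. The one mildly non-obvious algebraic identity, $(\mathrm{tr}\,M)^2-4\det M=\Delta(\tht_0)$, is what ties the spectral data to the function $\Delta$, and I would record it explicitly.
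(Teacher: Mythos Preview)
Your proof is correct and is exactly the direct eigenvalue/eigenvector computation that the paper has in mind; the paper in fact omits the proof entirely, remarking at the start of Section~\ref{sec: McGehee coordinates} that the results there ``either can be found in the above references or follow from direct computations.'' Your explicit identification $(\mathrm{tr}\,M)^2-4\det M=\Delta(\tht_0)$ and the use of $\sin^2\psi_0=1$ are precisely the bookkeeping points the paper leaves to the reader.
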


\begin{figure}
  \centering
  \includegraphics[scale=0.48]{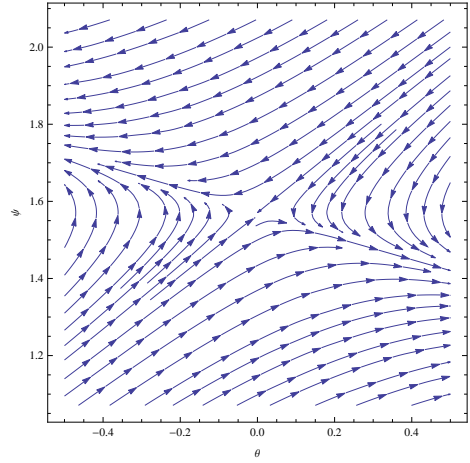}
  \caption{$(\psi_0, \theta_0)=(\pi/2,0)$}
  \label{fig: equil1}
\end{figure}

\begin{figure}
    \centering
    \begin{subfigure}[b]{0.48\textwidth}
        \includegraphics[width=\textwidth]{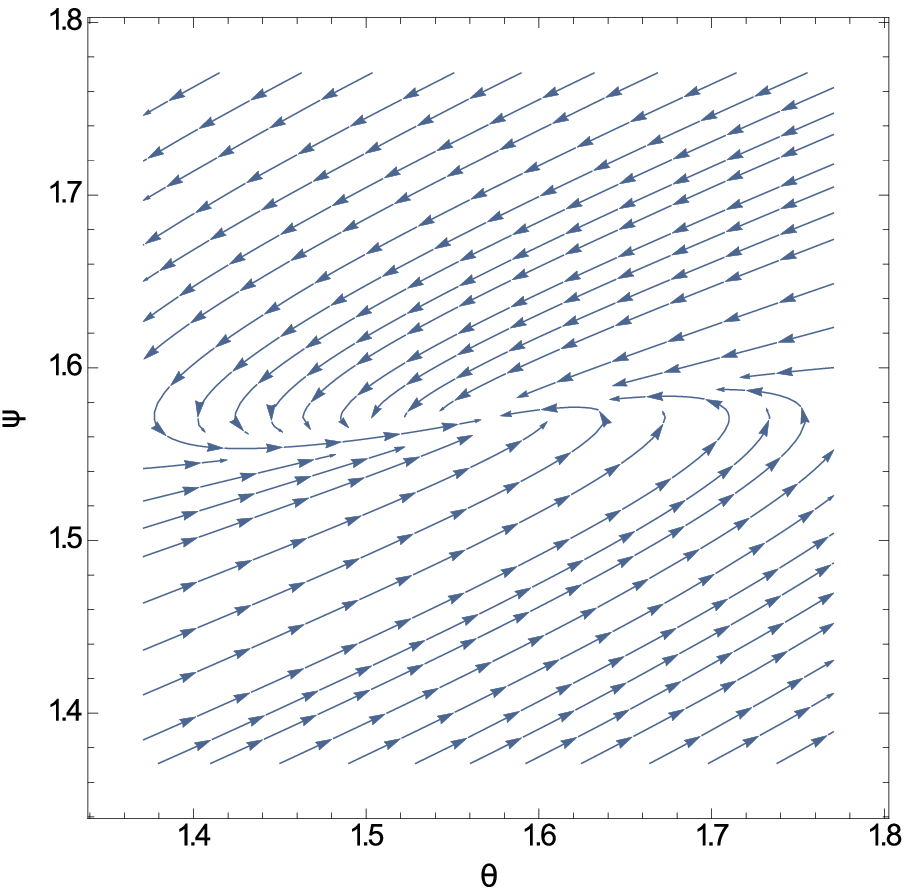}
        \caption{$(\psi_0, \tht_0)=(\pi/2, \pi/2), \Delta>0$}
        \label{fig: equil2}
    \end{subfigure}
    ~
    \begin{subfigure}[b]{0.48\textwidth}
        \includegraphics[width=\textwidth]{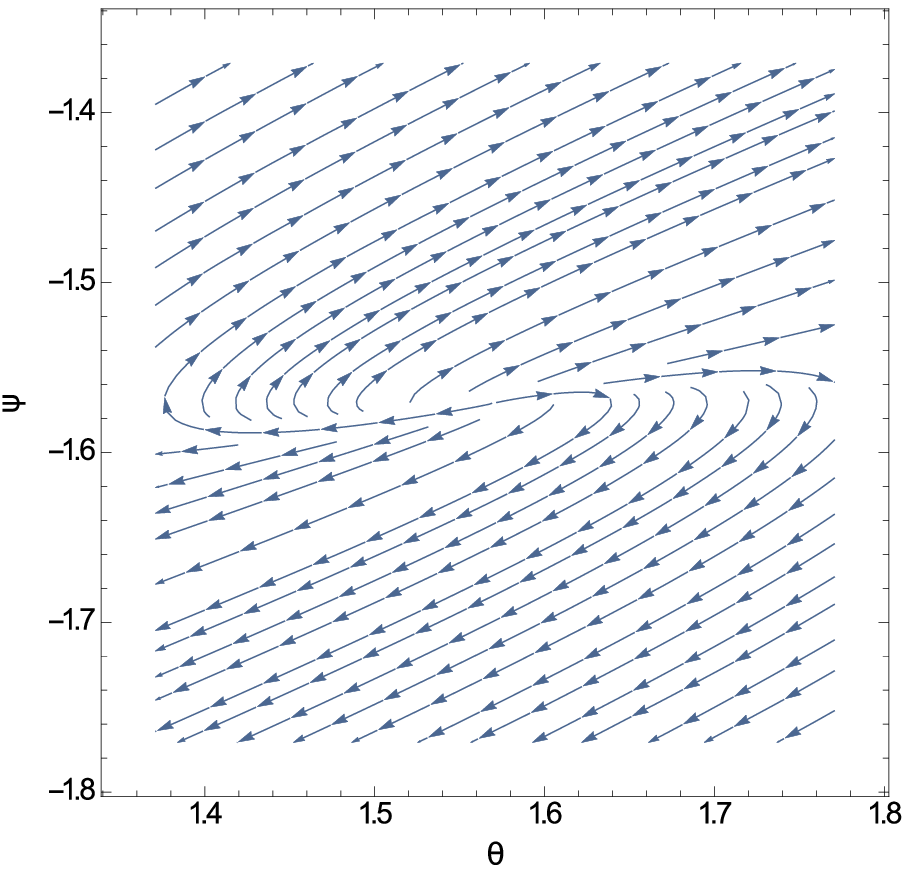}
        \caption{$(\psi_0, \tht_0)=(-\pi/2, \pi/2), \Delta>0$}
        \label{fig: equil3}
    \end{subfigure} \\
    ~    
    \begin{subfigure}[b]{0.48\textwidth}
        \includegraphics[width=\textwidth]{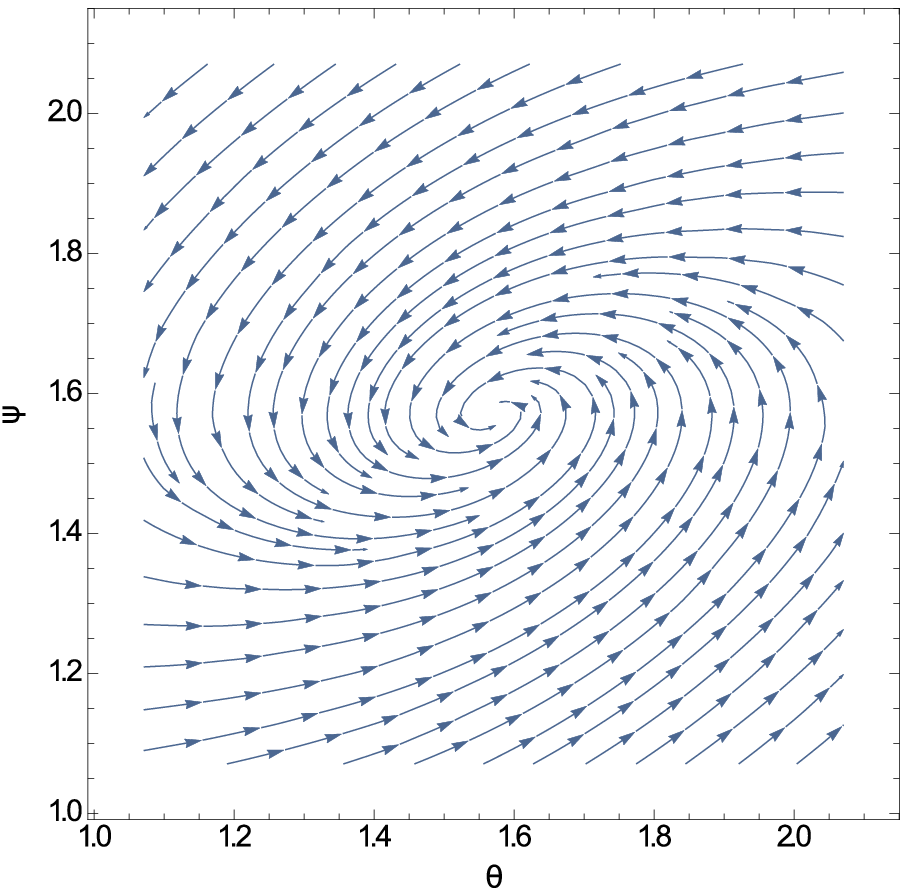}
        \caption{$(\psi_0, \tht_0)=(\pi/2, \pi/2), \Delta<0$}
        \label{fig: equil4}
    \end{subfigure}
    ~
    \begin{subfigure}[b]{0.48\textwidth}
        \includegraphics[width=\textwidth]{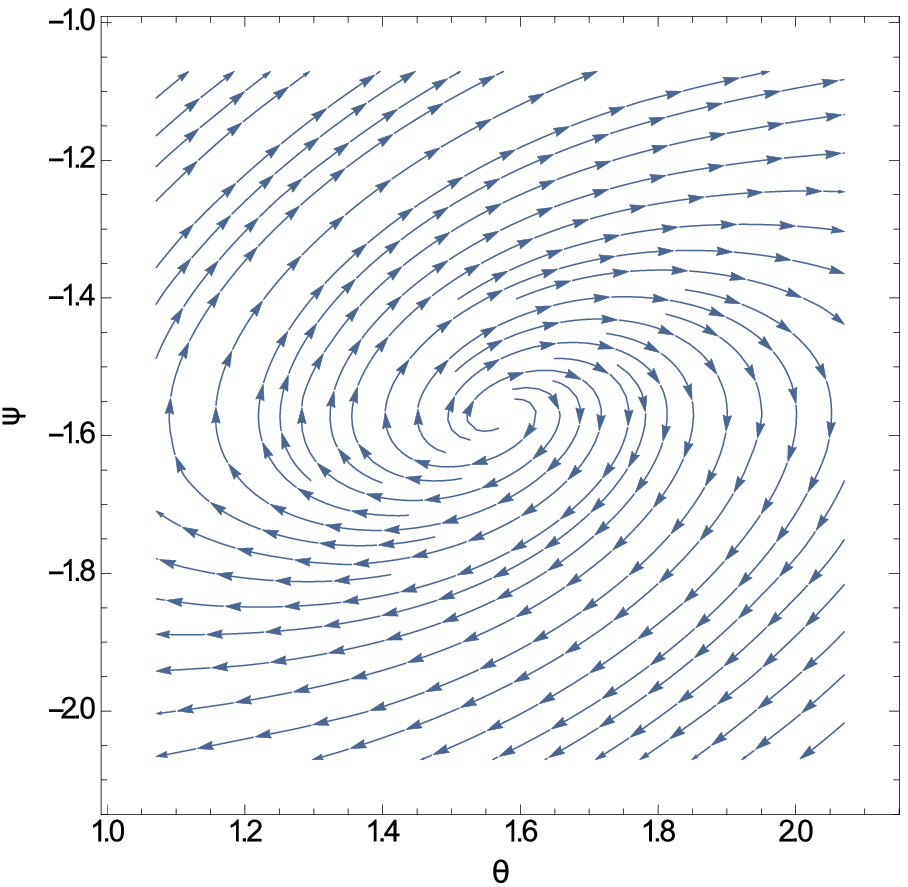}
        \caption{$(\psi_0, \tht_0)=(-\pi/2, \pi/2), \Delta<0$}
        \label{fig: equil5}
    \end{subfigure}
    \caption{}
\end{figure}

The following result is well-known, for a proof see \cite{ZDHD}.
\begin{lem} 
\label{lem: equi locally} When $\tht_0$ is a \emph{non-degenerate} critical point of $\uh$. Then
\begin{enumerate}
 \item[(a).] If $\lmd_-<0<\lmd_+$, then $(\psi_0, \theta_0)$ is a \textbf{saddle}, with a $1$-dim stable manifold and a $1$-dim unstable manifold, which are tangent of linear subspace $\langle e_- \rangle$ and $\langle e_+ \rangle$ at $(\psi_0, \tht_0)$ respectively. See Figure \ref{fig: equil1}. 

\item[(b).] If $\lmd_-<\lmd_+<0$, then $(\psi_0, \theta_0)$ is a \textbf{stable node}. It is asymptotically stable with all the orbits asymptotically converge to $(\psi_0, \tht_0)$, when $t$ goes to positive infinity, along the linear subspace $\langle e_+ \rangle$, except two orbits which asymptotically converge to $(\psi_0, \tht_0)$ along the linear subspace $\langle e_- \rangle$. See Figure \ref{fig: equil2}.

\item[(c).] If $0<\lmd_-<\lmd_+$, then $(\psi_0, \theta_0)$ is a \textbf{unstable node}. It is asymptotically unstable with all the orbits asymptotically converge to $(\psi_0, \tht_0)$, when $t$ goes to negative infinity, along the linear subspace $\langle e_+ \rangle$, except two orbits which asymptotically converge to $(\psi_0, \tht_0)$ along the linear subspace $\langle e_- \rangle$. See Figure \ref{fig: equil3}.

\item[(d).] If $\lmd_{\pm} \in \cc \setminus \rr$, with $\Re(\lmd_{\pm})<0$, then $(\psi_0, \theta_0)$ is a \textbf{stable focus}. It is asymptotically stable with all the orbits spiral into $(\psi_0, \tht_0)$. See Figure \ref{fig: equil4}.

\item[(e).] If $\lmd_{\pm} \in \cc \setminus \rr$, with $\Re(\lmd_{\pm})>0$, then $(\psi_0, \theta_0)$ is a \textbf{unstable focus}. It is asymptotically unstable with all the orbits spiral away from $(\psi_0, \theta_0)$. See Figure \ref{fig: equil5}.
\end{enumerate} 
\end{lem}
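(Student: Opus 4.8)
The plan is to extract all the needed information from the spectral data of Lemma \ref{lem: equil eigenvalue} and then appeal to the classical local theory of equilibria of planar flows. First I would record that by Lemma \ref{lem: equail} an equilibrium on $\mf$ has $\sin\psi_0 = \pm 1$, and since $\tht_0$ is a non-degenerate critical point of $\uh$, one has $\uh_{\tht\tht}(\tht_0)\ne 0$. Hence
$$\mathrm{tr}\, M(\psi_0,\tht_0) = -\tfrac{2-\al}{2}\sqrt{2\uh(\tht_0)}\,\sin\psi_0 \ne 0, \qquad \det M(\psi_0,\tht_0) = -\uh_{\tht\tht}(\tht_0)\sin^2\psi_0 = -\uh_{\tht\tht}(\tht_0)\ne 0,$$
so in every one of the configurations (a)--(e) one has $\Re(\lmd_{\pm})\ne 0$ and $(\psi_0,\tht_0)$ is a hyperbolic equilibrium of \eqref{eq: vector field collision mfd}; note that the borderline case $\uh_{\tht\tht}(\tht_0) = -\tfrac{(2-\al)^2}{8}\uh(\tht_0)$ of Lemma \ref{lem: equil eigenvalue}(d) does not appear among (a)--(e).

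For the qualitative picture I would invoke the Hartman--Grobman theorem: in a neighborhood of $(\psi_0,\tht_0)$ the flow of \eqref{eq: vector field collision mfd} is topologically conjugate to the linear flow $w' = M(\psi_0,\tht_0)w$. This yields at once the convergence statements: in case (a) the local picture of a linear saddle with its two-dimensional stable/unstable splitting; in cases (b) and (c) the picture of a sink (resp.\ source) toward which every nearby orbit converges in forward (resp.\ backward) time; and in cases (d) and (e) orbits spiralling into (resp.\ out of) the equilibrium, the last point because the number of half-turns an orbit makes around the equilibrium is a topological invariant preserved under the conjugacy.

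The tangency assertions require slightly more than topological conjugacy. In the saddle case (a) the stable/unstable manifold theorem furnishes $C^1$ one-dimensional invariant curves $W^s$ and $W^u$ through $(\psi_0,\tht_0)$, tangent there to the eigendirections $\langle e_-\rangle$ and $\langle e_+\rangle$ respectively, with decay/growth rates $\lmd_\pm$ as in Lemma \ref{lem: equil eigenvalue}. In the node cases (b) (where $\lmd_-<\lmd_+<0$) and (c) (where $0<\lmd_-<\lmd_+$), the strong stable (resp.\ strong unstable) manifold theorem produces a $C^1$ one-dimensional invariant curve tangent to $\langle e_-\rangle$, consisting exactly of the two orbits that approach $(\psi_0,\tht_0)$ at the exponential rate $\lmd_-$; a direct estimate on the ratio of the two eigencoordinates along any orbit off this curve then shows that such an orbit approaches the equilibrium tangent to the slow direction $\langle e_+\rangle$. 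I expect this last point --- pinning down the asymptotic direction of the non-strong orbits in a node --- to be the only genuinely delicate step; it is precisely the classical computation carried out in \cite{ZDHD}, and alternatively can be obtained by a polar blow-up of the equilibrium, which replaces the node by a normally hyperbolic circle of equilibria whose two hyperbolic points read off the two admissible incoming directions.
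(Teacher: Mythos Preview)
The paper does not give its own proof of this lemma: it is stated as ``well-known'' and the reader is referred to \cite{ZDHD}. Your outline---check hyperbolicity from the spectral data of Lemma~\ref{lem: equil eigenvalue}, invoke Hartman--Grobman for the topological picture, and use the (strong) stable/unstable manifold theorems together with the standard eigencomponent-ratio estimate for the tangency assertions in the node cases---is exactly the classical argument one finds in a planar ODE text such as the one cited, so there is nothing to compare: your approach \emph{is} the approach the paper defers to.

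One small caution on your handling of case~(c): for $0<\lmd_-<\lmd_+$ the \emph{strong unstable} manifold is tangent to the eigendirection of the eigenvalue of largest modulus, namely $\langle e_+\rangle$, and the generic backward-time approach is tangent to the slow direction $\langle e_-\rangle$. Your sentence groups (b) and (c) together as ``tangent to $\langle e_-\rangle$'' for the strong manifold and ``tangent to $\langle e_+\rangle$'' for the generic orbits, which is correct for the stable node but has the labels swapped for the unstable node; be sure your write-up distinguishes the two cases.
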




\begin{figure} 
    \centering
     \begin{subfigure}[b]{0.48\textwidth}
      \includegraphics[width=\textwidth]{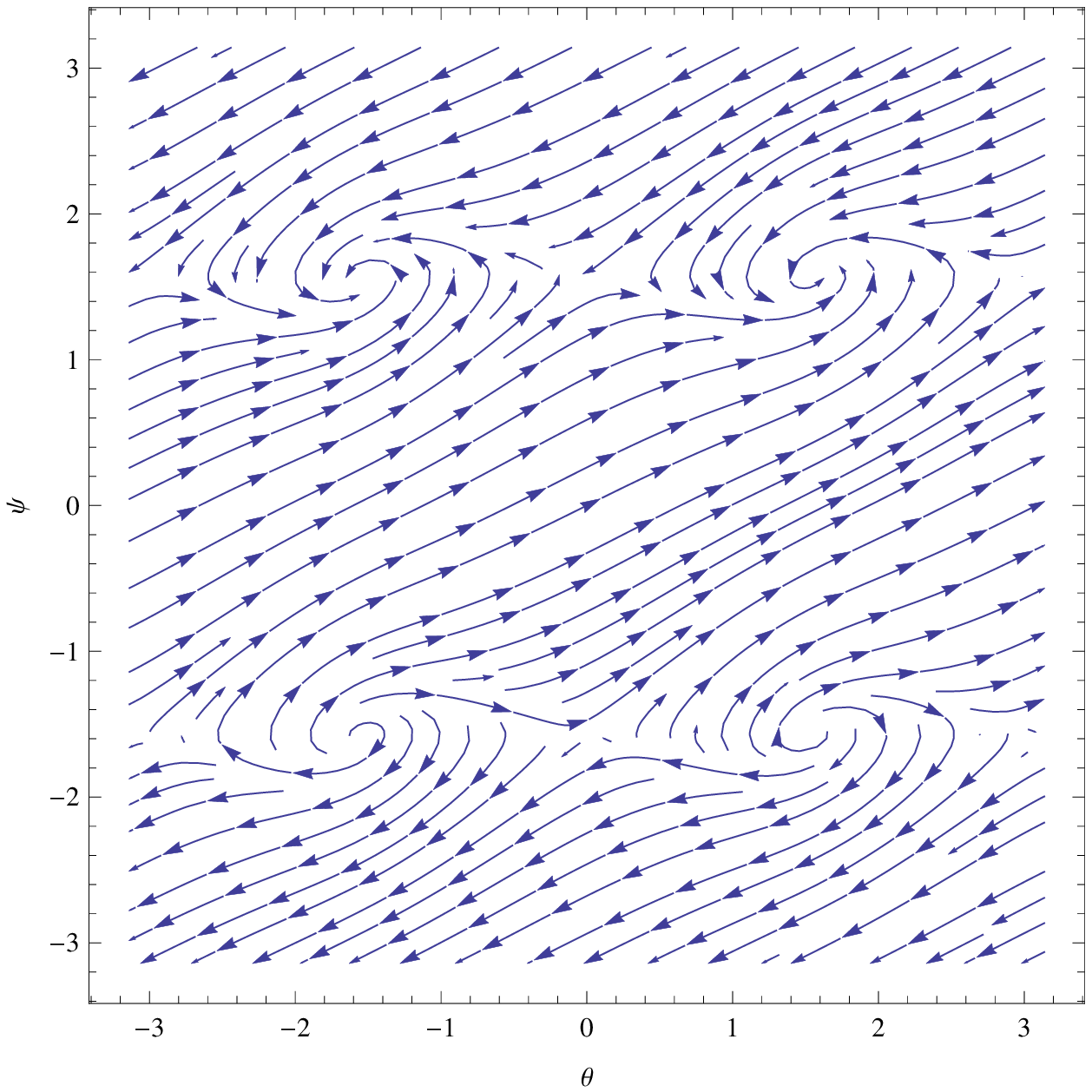}
        \caption{$\al=1, \mu=2$}
    \end{subfigure}
    ~
    \begin{subfigure}[b]{0.48\textwidth}
        \includegraphics[width=\textwidth]{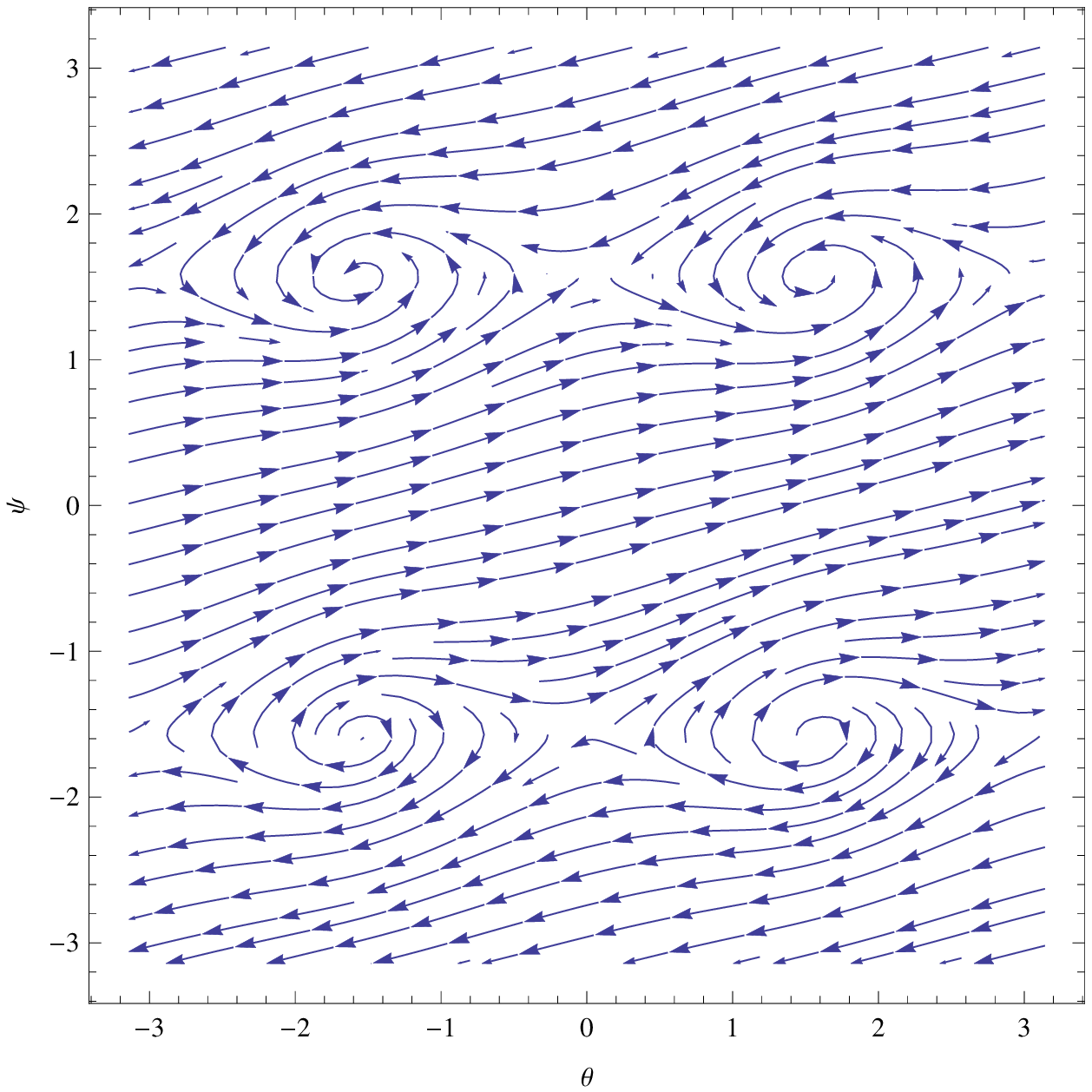}
        \caption{$\al=3/2, \mu=2$}
    \end{subfigure}
    \caption{}
    \label{fig: phase}
\end{figure}

Since $v$ is a Lyapunov function of the vector field on the collision manifold, besides the equilibria, there are no closed or recurrent orbits. As a result

\begin{cor}
\label{cor: dynamics coll mfd} If the critical point of $\uh$ are isolated, any orbit in $\mf$ is either an equilibrium or a heteroclinic orbit connecting two different equilibria.  
\end{cor}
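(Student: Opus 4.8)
The plan is to exploit the fact, established just above, that $v$ is a Lyapunov function for the flow on the two-dimensional compact manifold $\mf$, together with the Poincar\'e--Bendixson theory for flows on surfaces. First I would record the two structural facts that do the work: by \eqref{eq: v'} we have $v' = (1-\tfrac\al2)u^2 \ge 0$ along every orbit in $\mf$ (strictly positive unless $u=0$, i.e.\ unless $\cos\psi=0$), and by Lemma~\ref{lem: equail}(a) the equilibria are exactly the points $(\pm\pi/2,\tht_0)$ with $\tht_0$ a critical point of $\uh$; under the hypothesis that the critical points of $\uh$ are isolated, these equilibria form a finite set (using compactness of $\mathbb{S}^1$).

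Next I would argue there are no periodic orbits and no nontrivial recurrence. If $\ga$ were a periodic orbit, then $v$ would return to its initial value after one period, forcing $v'\equiv 0$ along $\ga$, hence $u\equiv 0$, hence $\cos\psi\equiv 0$ and $\psi\equiv \pm\pi/2$; but then the second equation of \eqref{eq: vector field collision mfd} gives $\tht'=0$, so $\ga$ is a single equilibrium, a contradiction. The same monotonicity argument rules out orbits homoclinic to an equilibrium and, more generally, any orbit whose $\omega$-limit or $\alpha$-limit set contains a cycle of connections: along any such invariant set $v$ would be forced to be constant, again collapsing it to a union of equilibria with $\cos\psi\equiv 0$, which by the above is a single point. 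So every $\omega$-limit set and every $\alpha$-limit set of an orbit in $\mf$ is a nonempty compact connected invariant set containing no periodic orbit, and by the Poincar\'e--Bendixson theorem (valid on the torus once we know there are no periodic orbits bounding, equivalently once we rule out the exceptional minimal-set case, which the Lyapunov function does) such a set must be a single equilibrium. Thus for any orbit $o(\tau)$ in $\mf$, both $\lim_{\tau\to+\infty}o(\tau)$ and $\lim_{\tau\to-\infty}o(\tau)$ exist and are equilibria; if the orbit is not itself an equilibrium these two equilibria must be distinct, since $v$ is strictly increasing at some point along $o$ (indeed $v$ at the $\alpha$-limit is $\le v(o(\tau_0)) \le v$ at the $\omega$-limit, with at least one inequality strict because $o$ is not an equilibrium so $u\not\equiv0$ on it). This is exactly the assertion that every orbit is an equilibrium or a heteroclinic orbit connecting two different equilibria.

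The main obstacle is the Poincar\'e--Bendixson step on the torus: on a general surface one can have nontrivial minimal sets (irrational-slope-type flows) that are not equilibria and not periodic, so the naive Poincar\'e--Bendixson conclusion fails. The way around this is precisely the Lyapunov function: any minimal set $\Lambda\subset\mf$ must have $v|_\Lambda$ attaining its maximum at some point $p\in\Lambda$, and by invariance the whole forward orbit of $p$ stays in $\Lambda$ where $v\le v(p)$, forcing $v'\equiv 0$ on that orbit, hence $u\equiv 0$, hence $p$ is an equilibrium; minimality then gives $\Lambda=\{p\}$. I would phrase the proof as: (i) the equilibrium set is finite; (ii) $v$ strictly increasing off the zero set of $u$ forbids periodic orbits, minimal sets other than equilibria, and recurrence; (iii) hence $\omega$- and $\alpha$-limit sets are single equilibria; (iv) monotonicity of $v$ forces the two endpoints of a non-equilibrium orbit to differ. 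Since the excerpt says "for a proof see \cite{ZDHD}" for the preceding lemma and that the present corollary's proof "follow[s] from direct computations", I expect the authors to give essentially this short Lyapunov-function argument, and I would keep it to a few lines referencing \eqref{eq: v'} and Lemma~\ref{lem: equail}.
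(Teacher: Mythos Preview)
Your argument is correct and follows essentially the same route as the paper: the authors simply remark, immediately before the corollary, that since $v$ is a Lyapunov function there are no closed or recurrent orbits on $\mf$, and state the corollary as an immediate consequence (no further proof is given). Your write-up just fills in the details of that sentence, including the torus-specific point about ruling out exceptional minimal sets via the Lyapunov function, which the paper leaves implicit.
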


Lemma \ref{lem: equail}, \ref{lem: equil eigenvalue} and \ref{lem: equi locally} give us a complete picture of the phase portraits of the vector field on $\mf$ (see Figure \ref{fig: phase} for numerical pictures when the potential is defined as in \eqref{eq: uh anisotropic kepler}). Let $(\psi, \tht)(\tau)$ be a heteroclinic orbit and $(\psi_0^{\pm}, \tht_0^{\pm})$ two equilibria in $\mf$ satisfying
\begin{equation} \label{eq: limit psi tht}
\lim_{\tau \to \pm \infty}(\psi, \tht)(\tau) = (\psi_0^{\pm}, \tht_0^{\pm}),
\end{equation}
then correspondingly
\begin{equation}
\label{eq: lim v u tht} \lim_{\tau \to \pm \infty}(v, u, \tht)(\tau)= (\sqrt{2\uh(\tht_0^{\pm})} \sin \psi_0^\pm, \sqrt{2\uh(\tht_0^\pm)} \cos \psi_0^\pm, \tht_0^\pm).
\end{equation}
Since $v$ is a Lyapunov function, 
\begin{equation}
\label{eq: v- < v+} \sqrt{2\uh(\tht_0^{-})} \sin \psi_0^-< \sqrt{2\uh(\tht_0^{+})} \sin \psi_0^+. 
\end{equation}
As a result, there are three different types of heteroclinic orbits in $\mf$:
\begin{enumerate}
\item[type-I.] $\psi_0^-= -\pi/2, \psi_0^+= \pi/2$;
\item[type-II.] $\psi_0^-=\psi_0^+=\pi/2$ and $\uh(\tht_0^-)< \uh(\tht_0^+)$;
\item[type-III.] $\psi_0^- = \psi_0^+=-\pi/2$ and $\uh(\tht_0^-) > \uh(\tht_0^+)$. 
\end{enumerate}

\begin{lem}
\label{lem: heteroclinic types} Given a heteroclinic orbit $(\psi, \tht)(\tau)$ in $\mf$,  if $(r, t)(\tau)$ satisfies
\begin{equation} \begin{cases}
r' & = r \sqrt{2 \uh(\tht)} \sin \psi,\\
t' & = r^{1+\frac{\al}{2}},
\end{cases}
\end{equation}
then 
\begin{enumerate}
\item[(a).] when $(\psi, \tht)(\tau)$ is type-I, $\lim_{\tau \to \pm \infty} r(\tau) = \pm \infty$, $\lim_{t \to \pm \infty}t(\tau)= \pm \infty$;
\item[(b).] when $(\psi, \tht)(\tau)$ is type-II, $\lim_{\tau \to +\infty} r(\tau) = +\infty$, $\lim_{\tau \to +\infty}t(\tau) = +\infty$, and $\lim_{\tau \to -\infty} r(\tau)=0$, $\lim_{\tau \to -\infty} t(\tau) = T^-_0 >-\infty$;
\item[(c).] when $(\psi, \tht)(\tau)$ is type-III, $\lim_{\tau \to +\infty} r(\tau) = 0$, $\lim_{\tau \to +\infty}t(\tau) = T^+_0< +\infty$ and $\lim_{\tau \to -\infty} r(\tau)=+\infty$, $\lim_{\tau \to -\infty}t(\tau) = -\infty$.
\end{enumerate}
\end{lem}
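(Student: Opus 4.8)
The plan is to integrate the $r$-equation in closed form and then read off every asymptotic statement from the sign of the two limiting values of $v$ along the heteroclinic orbit. Set $v(\tau) := \sqrt{2\uh(\tht(\tau))}\,\sin\psi(\tau)$, so that the first equation becomes $r' = rv$ and hence $r(\tau) = r(0)\,\exp\!\big(\int_0^\tau v(s)\,ds\big)$. Thus the behaviour of $r$ at $\tau=\pm\infty$ is controlled entirely by whether $\int_0^{\pm\infty} v$ diverges to $+\infty$ or to $-\infty$. By \eqref{eq: v'}, $v$ is non-decreasing along the orbit, and by \eqref{eq: lim v u tht} it converges to $v_0^{\pm} := \sqrt{2\uh(\tht_0^{\pm})}\,\sin\psi_0^{\pm}$ as $\tau\to\pm\infty$; monotonicity therefore pins $v$ between its two limits, $v_0^- \le v(\tau) \le v_0^+$ for all $\tau$.

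Next I would go through the three types separately, using the sign data for $\psi_0^{\pm}$ supplied by Lemma~\ref{lem: equail}(a). For type-I one has $\sin\psi_0^-=-1$ and $\sin\psi_0^+=+1$, so $v_0^-<0<v_0^+$; since $v(s)$ is eventually $\le v_0^-/2<0$ as $s\to-\infty$ and eventually $\ge v_0^+/2>0$ as $s\to+\infty$, both tails of $\int_0^\tau v$ diverge to $+\infty$, hence $r(\tau)\to+\infty$ at both ends. For type-II one has $\sin\psi_0^{\pm}=+1$ and $0<v_0^-<v_0^+$ (exactly \eqref{eq: v- < v+}), hence $v(s)\ge v_0^->0$ for every $s$; therefore $\int_0^\tau v\to+\infty$ as $\tau\to+\infty$ (so $r\to+\infty$), while for $\tau<0$ the estimate $\int_\tau^0 v \ge v_0^-|\tau|$ gives $r(\tau)\le r(0)e^{-v_0^-|\tau|}\to 0$ exponentially. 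Type-III is the time-reversed mirror image: $\sin\psi_0^{\pm}=-1$ and $v_0^-<v_0^+<0$, so $v<0$ everywhere; consequently $r(\tau)\to+\infty$ as $\tau\to-\infty$, while the estimate $\int_0^\tau v \le v_0^+\tau$ gives $r(\tau)\le r(0)e^{v_0^+\tau}\to 0$ exponentially as $\tau\to+\infty$.

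The statements about $t$ then follow from $t'=r^{1+\al/2}>0$: the function $t$ is strictly increasing, so it remains only to decide whether the improper integral $\int r^{1+\al/2}$ converges at each endpoint. Where $r\to+\infty$ the integrand is eventually bounded below by a positive constant, the integral diverges, and $t(\tau)$ runs off to $\mp\infty$ at $\tau=\mp\infty$ accordingly; where $r\to 0$ at the exponential rate found above, $r^{1+\al/2}$ decays exponentially as well, the improper integral converges, and $t(\tau)$ tends to a finite limit, which one names $T_0^-$ (in case (b), $>-\infty$) or $T_0^+$ (in case (c), $<+\infty$). Assembling the three cases gives (a)--(c).

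The one point that needs genuine care — and the only real, if modest, obstacle — is obtaining an exponential rate for $r$ near the finite-time endpoints rather than merely $r\to 0$: without such a rate one could not conclude $\int r^{1+\al/2}<\infty$, and the finiteness of $T_0^{\mp}$ would fail. This is precisely where the strict sign of $v_0^{\pm}$ is used together with the monotonicity of $v$, which forces $\int_0^\tau v$ to be asymptotically linear in $\tau$ with nonzero slope. Everything else is bookkeeping on the signs handed to us by the type classification.
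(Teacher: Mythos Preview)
Your proof is correct. The paper itself omits the proof of this lemma, remarking at the start of Section~\ref{sec: McGehee coordinates} that the results ``either can be found in the above references or follow from direct computations''; your argument is precisely the natural direct computation. The key observation you isolate --- that monotonicity of $v$ together with the strict sign of the limit $v_0^{\pm}$ forces $\int_0^\tau v$ to be asymptotically linear in $\tau$, hence $r$ decays or grows exponentially --- is exactly what is needed to settle the convergence of $\int r^{1+\al/2}$ at the collision endpoints, and you flag this correctly as the only nontrivial point.
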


Let $x(t)$, $t \in (T^-, T^+)$, be a zero energy solution of \eqref{eq: Lag equation} and $z(\tau)$, $\tau \in \rr$, the corresponding orbit of \eqref{eq: Hamiltonian equation}, we define $\pi(z)(\tau):= (\psi, \tht)(\tau)$ as the projection of $z(\tau)$ in the collision manifold.  
\begin{prop}
\label{prop: parabolic to heteroclinic} If the critical points of $\uh$ are isolated in $\mathbb{S}^1$, then
\begin{enumerate}
\item[(a).] $\pi(z)(\tau)$ is an equilibrium in $\mf$, if and only if $x(t)$ is homothetic; 
\item[(b).] $\pi(z)(\tau)$ is a type-I heteroclinic orbit, if and only if $x(t)$ is a non-homothetic parabolic solution; 
\item[(c).] $\pi(z)(\tau)$ is a type-II heteroclinic orbit, if and only if $x(t)$ is a non-homothetic collision-parabolic solution; 
\item[(d).] $\pi(z)(\tau)$ is a type-III heteroclinic orbit, if and only if $x(t)$ is a non-homothetic parabolic-collision solution. 
\end{enumerate}
\end{prop}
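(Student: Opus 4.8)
The plan is to transport the asymptotic behaviour of $x$ onto the collision manifold $\mf$ and then invoke the classification of its orbits. Let $x(t)$, $t\in(T^-,T^+)$, be a zero energy solution of \eqref{eq: Lag equation} and $z(\tau)$ the corresponding orbit in McGehee coordinates. By the energy identity \eqref{eq: energy 0}, which on the zero energy level holds for \emph{all} values of $r$, we have $u^2+v^2=2\uh(\tht)$ along $z$, so $z$ can be written in the coordinates $(\psi,\tht,r)$ of \eqref{eq: uv psi}; substituting these into \eqref{eq: McGehee 2} and using \eqref{eq: v'} (exactly as in the derivation of \eqref{eq: vector field collision mfd}) shows that the McGehee flow on the zero energy level is a skew product in which the $(\psi,\tht)$-equations are precisely \eqref{eq: vector field collision mfd} and $r'=r\sqrt{2\uh(\tht)}\sin\psi=rv$. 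Hence $\pi(z)(\tau)=(\psi,\tht)(\tau)$ is an orbit of the vector field on $\mf$. Since that field is complete ($\mf$ is compact) and $r'=rv$ keeps $r$ positive and finite over bounded $\tau$-intervals, $z(\tau)$ exists for all $\tau\in\rr$; reparametrising back by $t'(\tau)=r^{1+\al/2}$ one checks that $(T^-,T^+)=\big(\lim_{\tau\to-\infty}t(\tau),\,\lim_{\tau\to+\infty}t(\tau)\big)$.

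Now apply Corollary \ref{cor: dynamics coll mfd}: $\pi(z)(\tau)$ is either an equilibrium of \eqref{eq: vector field collision mfd} or a heteroclinic orbit, which (by the trichotomy stated just before Lemma \ref{lem: heteroclinic types}) is of type I, II or III. This already gives (a): if $\pi(z)\equiv(\psi_0,\tht_0)$ then $\tht(\tau)\equiv\tht_0$, so $\tht(t)\equiv\tht_0$ and $x$ is homothetic; conversely, if $x$ is homothetic then $\tht$ is constant, so $\tht'=\sqrt{2\uh(\tht)}\cos\psi\equiv0$ forces $\psi\equiv\pi/2$ or $\psi\equiv-\pi/2$ by continuity, whence $\pi(z)$ is the constant orbit $(\pm\pi/2,\tht_0)$, which is an equilibrium (and $\tht_0$ is a critical point of $\uh$ by Lemma \ref{lem: equail}(a)). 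In particular a non-homothetic zero energy solution projects to a heteroclinic orbit.

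For (b)--(d) it remains to match the three heteroclinic types with the three classes of Definition \ref{def: parabolic}. Assume $x$ is non-homothetic, so $\pi(z)$ is a type-I, -II or -III heteroclinic, and observe that the pair $(r,t)(\tau)$ attached to $x$ satisfies precisely the ODEs in the statement of Lemma \ref{lem: heteroclinic types} (the first being $r'=rv$, the second the time change). If $\pi(z)$ is type-I, that lemma gives $r(\tau)\to+\infty$ and $t(\tau)\to\pm\infty$ as $\tau\to\pm\infty$; hence $T^\pm=\pm\infty$, $|x(t)|\to+\infty$ as $t\to T^\pm$, and $|\xd|^2=\rd^2+r^2\thd^2=r^{-\al}(v^2+u^2)=2r^{-\al}\uh(\tht)\to0$, so $x$ is a non-homothetic parabolic solution. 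In the same way type-II forces $T^-\in\rr$ with $x$ extending continuously to $x(T^-)=0$ (as $r\to0$ and $\tht$ converges) together with complete-parabolic behaviour at $T^+=+\infty$, i.e. $x$ is collision-parabolic; and type-III gives a parabolic-collision solution. The converse implications follow from mutual exclusivity: if, say, $x$ is parabolic, it is non-homothetic (a homothetic solution hits the origin in finite time), so $\pi(z)$ is a heteroclinic, and it cannot be type-II or type-III since those force $T^-\in\rr$, resp.\ $T^+\in\rr$, contradicting $T^\pm=\pm\infty$; hence it is type-I. The collision-parabolic and parabolic-collision cases are handled identically, and since by Theorem \ref{thm: zero energy} every zero energy solution falls into one of the three classes, the proof is complete.

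The only step with genuine content is the first one: recognising, via \eqref{eq: energy 0} and \eqref{eq: v'}, that on the zero energy level the McGehee flow is a skew product over the flow \eqref{eq: vector field collision mfd} on the compact surface $\mf$, so that the asymptotics of $x$ are governed entirely by an equilibrium or heteroclinic of a flow on $\mf$. After that, everything is bookkeeping with Corollary \ref{cor: dynamics coll mfd}, Lemma \ref{lem: heteroclinic types}, and the mutual exclusivity of the three solution types.
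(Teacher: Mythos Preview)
Your argument is correct and is essentially the reconstruction one would expect; note that the paper itself omits the proof of this proposition, stating at the head of Section~\ref{sec: McGehee coordinates} that the results are due to Devaney and that ``their proofs either can be found in the above references or follow from direct computations, so will be omitted.''

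One remark on logic: your closing appeal to Theorem~\ref{thm: zero energy} is both unnecessary and circular. It is unnecessary because you have already established all four biconditionals---(a) directly, and (b)--(d) forward via Lemma~\ref{lem: heteroclinic types} and backward via mutual exclusivity of the heteroclinic types. It is circular because, as the paper records in Remark~\ref{rmk:isolated time}, Proposition~\ref{prop: parabolic to heteroclinic} is what \emph{implies} Theorem~\ref{thm: zero energy}, not the other way around. Simply delete that last clause and the proof stands.
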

 
\begin{rem}\label{rmk:isolated time} 
The above proposition implies Theorem \ref{thm: zero energy} and for a non-homothetic zero energy solution $x(t)$, Lemma \ref{lem: equail} and Proposition \ref{prop: parabolic to heteroclinic} imply $\{ t \in (T^-, T^+): \; \dot{\tht}(t)=0 \}$ is isolated in $(T^-, T^+)$, as $dt= r^{1+\frac{\al}{2}}d \tau$ and $r(\tau)>0$, for any $\tau$.
\end{rem}

\begin{rem}
For systems with arbitrary finite degrees of freedom, one can define the McGehee coordinates similarly with the corresponding $v$ being a Lyapunov function. Moreover the connection between zero energy solutions and orbits on the collision manifold still exist, so we expect results from this section will still hold. 
\end{rem}

\section{Asymptotic analysis of the linear Hamiltonian system} \label{sec: asymptotic linear}

Throughout this section let $x(t), t \in (T^-, T^+)$, be a non-homothetic zero energy solution of \eqref{eq: Lag equation} and  $z(t)=(p_1, p_2, r, \tht)^T(t)$ the corresponding zero energy orbit of \eqref{eq: Hamiltonian equation}. Consider the linearized equation of \eqref{eq: Hamiltonian equation} along $z(t)$ 
\begin{equation} \dot{\xi}(t)=J\nabla^2H(z(t))\xi(t). \label{eq: linearied} \end{equation}
Under the time parameter $\tau$ (notice that $\tau \to \pm \infty$, as $t \to T^{\pm}$),
\begin{equation} \xi'(\tau)= J B(\tau) \xi(\tau):=r^{1+\frac{\alpha}{2}}(\tau)J\nabla^2 H(z(\tau))\xi(\tau), \label{eq: linearied tau} \end{equation} 
where 
\begin{equation} \label{eq: B tau} B(\tau)=\left(
  \begin{array}{cccc}
    r^{1+\frac{\alpha}{2}} &   0 & 0 &   0\\
    0   & r^{\frac{\alpha}{2}-1} &   -2 r^{\frac{\alpha}{2}-2}p_2& 0\\
    0 &   -\frac{2p_2}{r^{2-\frac{\alpha}{2}}} &  \frac{3p^2_2}{r^{3-\frac{\alpha}{2}}}-\frac{\al(\al+1)\uh(\theta)}{r^{1+\frac{\alpha}{2}}} &   \alpha r^{-\frac{\alpha}{2}}\uh_{\tht}(\theta)\\
    0   &0 &    \alpha r^{-\frac{\alpha}{2}}\uh_{\tht}(\theta) & -r^{1-\frac{\alpha}{2}}\uh_{\tht \tht}(\theta)  \\
  \end{array}
\right) (\tau).
\end{equation}

Our main goal is to understand the asymptotic behavior of the above linear Hamiltonian system, as $\tau$ goes to $\pm \infty$. To separate the variable $r$, we define the following symplectic matrix 
\begin{equation} \label{eq: R}
R(\tau) =\text{diag} (r^{\frac{1}{2}+\frac{\alpha}{4}},r^{\frac{\alpha}{4}-\frac{1}{2}},r^{-\frac{1}{2}-\frac{\alpha}{4}},r^{\frac{1}{2}-\frac{\alpha}{4}})(\tau).
\end{equation}
Then for $\xi(\tau)$ satisfying \eqref{eq: linearied tau}, $\eta(\tau)=R(\tau)\xi(\tau)$ is a solution of 
\begin{equation} \eta'(\tau) =J\hat{B}(\tau)\eta(\tau), \label{eq: linearied normalized} \end{equation}
where $\hat{B}(\tau)=-JR'(\tau)R^{-1}(\tau)+R^{-1}(\tau)B(\tau)R^{-1}(\tau)$. 

Under McGehee coordinates \eqref{eq: v u McGehee},
\begin{equation} \hat{B}(\tau)=\left(
  \begin{array}{cccc}
    1 &   0 & -\frac{2+\al}{4}v &   0\\
    0   & 1 &   -2u & \frac{2-\alpha}{4}v\\
    -\frac{2+\alpha}{4}v &   -2u &  3u^2-\alpha(\alpha+1)\uh(\theta) &   \alpha \uh_{\tht}(\theta)\\
    0   & \frac{2-\alpha}{4}v &    \alpha \uh_{\tht}(\theta) & -\uh_{\tht \tht}(\theta)  \\
  \end{array}
\right)(\tau).
 \end{equation} 

Recall that the projection of $z(\tau)$ on the collision manifold, $(\psi, \tht)(\tau)=\pi(z)(\tau)$ is a heteroclinic orbit between two equilibria. Let $T^*=\pm \infty$, then 
\begin{equation}
\label{eq: lim psi tht *} 
\lim_{\tau \to T^*}(\psi, \tht)(\tau)= (\psi^*_0, \tht^*_0), \text{ where } \psi_0^{*} \in \{\pm \pi/2\}, \text{ and }  \uh_{\tht}(\tht_0^*)=0. 
\end{equation}
By \eqref{eq: lim v u tht},
\begin{equation}
\label{eq: lim v u tht type I}  \lim_{\tau \to T^*} (v, u, \tht)(\tau)= (\sin\psi_0^* \sqrt{2\uh(\tht_0^*)}, 0, \tht_0^*).
\end{equation} 

This implies $\bh_{*} := \lim_{\tau \to T^*} \bh(\tau)$ exists. Moreover $\bh_* = \bhy_{*} \diamond \bhe_{*}$ with
$$\hat{B}^{(1)}_* :=\left( 
\begin{array}{cccc}
1 &  -\frac{2+ \alpha}{4} \sqrt{ 2\uh(\theta_0^*)} \sin \psi^*_0 \\
-\frac{2+ \alpha}{4} \sqrt{ 2\uh(\theta^*_0)}\sin \psi^*_0 & -\alpha(\alpha+1)\uh(\theta^*_0)  
\end{array}
\right), $$ 
$$ \hat{B}^{(2)}_* :=\left(
 \begin{array}{cccc}
 1 & \frac{2-\alpha}{4}\sqrt{2\uh(\theta^*_0)}\sin \psi^*_0 \\
\frac{2-\alpha}{4}\sqrt{2\uh(\theta^*_0)}\sin \psi^*_0 & -\uh_{\tht \tht}(\theta^*_0)  
\end{array}
\right).$$ 
The \emph{symplectic sum} $\diamond$ is defined as in \cite{Lon4}: for any two $2m_k\times 2m_k$ square block matrices,  $M_k=\left(\begin{array}{cc}A_k&B_k\\ 
                             C_k&D_k\end{array}\right)$, $k=1, 2$, $ M_1 \diamond M_2=\left(
  \begin{array}{cccc}
   A_1 &   0 & B_1 &   0\\
                            0   & A_2 &   0 & B_2\\
                           C_1 &   0 & D_1 &   0\\
                           0   & C_2 &   0 & D_2  \\
  \end{array}
\right).\nonumber $

For $i =1$ or $2$, let $\lh^i_{\pm}(\psi_0^*,\theta_0^*)$ be the two eigenvalues of $J\bh^{(i)}_{*}$ with $\lh^i_{-}(\psi_0^*,\theta_0^*) \le \lh^i_{+}(\psi_0^*,\theta_0^*)$, when both of them are real, and $\eh^i_{\pm}(\psi^*_0,\theta^*_0)$ the corresponding eigenvectors.  When there is no confusion, we may omit $(\psi^*_0, \tht^*_0)$ in these notations. 
Direct computations give us the following lemma.

\begin{lem}
\label{lem: B hat 1} $J\bhy_{*}$ is a hyperbolic matrix with
$$ \lh^1_{\pm} = \pm\frac{2 +3\al}{4} \sqrt{2 \uh(\tht_0^*)} , \quad \eh^1_{\pm}= \left( \frac{(2+\al)\sin \psi^*_0\pm(2+3\al)}{4} \sqrt{2\uh(\tht^*_0)}, 1 \right)^T.$$ 
 $J\bhe_{*}$ is a hyperbolic matrix, when $\Delta(\tht^*_0)>0$, with 
$$ \lh^2_{\pm}= \pm\ey \sqrt{\Delta(\tht^*_0)},\quad   \eh^2_{\pm}= \left(-\frac{2-\al}{4}\sqrt{2\uh(\tht^*_0)} \sin \psi^*_0\pm\ey \sqrt{\Delta(\tht^*_0)}, 1 \right)^T. $$ \end{lem}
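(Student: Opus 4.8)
The statement to prove (Lemma \ref{lem: B hat 1}) asserts that the two $2\times 2$ matrices $J\bhy_*$ and $J\bhe_*$ are hyperbolic (under the stated sign condition for the second one) and gives explicit formulas for their eigenvalues and eigenvectors. My plan is as follows.

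First I would write out $J\bhy_*$ and $J\bhe_*$ explicitly. Here $J$ is the standard $2\times 2$ symplectic matrix $\left(\begin{smallmatrix}0&-1\\1&0\end{smallmatrix}\right)$, so for a symmetric matrix $\left(\begin{smallmatrix}a&b\\b&d\end{smallmatrix}\right)$ one has $J\left(\begin{smallmatrix}a&b\\b&d\end{smallmatrix}\right)=\left(\begin{smallmatrix}-b&-d\\a&b\end{smallmatrix}\right)$, which has trace $0$ and determinant $ad-b^2$. Thus both $J\bhy_*$ and $J\bhe_*$ are traceless, and their eigenvalues are $\pm\sqrt{b^2-ad}=\pm\sqrt{-\det}$. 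So the whole computation reduces to evaluating the determinants of $\bhy_*$ and $\bhe_*$ and taking square roots. For $\bhy_*$, using $\sin^2\psi_0^*=1$ (since $\psi_0^*\in\{\pm\pi/2\}$), the off-diagonal entry squared is $\frac{(2+\al)^2}{16}\cdot 2\uh(\tht_0^*)$ and the product of diagonal entries is $-\al(\al+1)\uh(\tht_0^*)$, so
\begin{equation*}
-\det \bhy_* = \tfrac{(2+\al)^2}{8}\uh(\tht_0^*)+\al(\al+1)\uh(\tht_0^*) = \tfrac{(2+3\al)^2}{8}\uh(\tht_0^*),
\end{equation*}
which is the routine algebraic identity $\frac{(2+\al)^2}{8}+\al(\al+1)=\frac{4+4\al+\al^2+8\al+8\al^2}{8}=\frac{9\al^2+12\al+4}{8}=\frac{(3\al+2)^2}{8}$. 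Taking the square root gives $\lh^1_\pm=\pm\frac{2+3\al}{4}\sqrt{2\uh(\tht_0^*)}$, and since $\uh>0$ this is a genuine pair of real nonzero eigenvalues of opposite sign, hence $J\bhy_*$ is hyperbolic. For $\bhe_*$, the same computation gives $-\det\bhe_* = \frac{(2-\al)^2}{8}\uh(\tht_0^*)+\uh_{\tht\tht}(\tht_0^*)=\frac14\Delta(\tht_0^*)$ directly from the definition \eqref{eq: delta} of $\Delta$, so $\lh^2_\pm=\pm\frac12\sqrt{\Delta(\tht_0^*)}$, which are real and nonzero precisely when $\Delta(\tht_0^*)>0$, giving hyperbolicity in that case.

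Next, for the eigenvectors, I would solve $(J\bhy_*-\lh^1_\pm I)\eh^1_\pm=0$ directly. Writing $\bhy_*=\left(\begin{smallmatrix}1&b_1\\b_1&d_1\end{smallmatrix}\right)$ with $b_1=-\frac{2+\al}{4}\sqrt{2\uh(\tht_0^*)}\sin\psi_0^*$ and $d_1=-\al(\al+1)\uh(\tht_0^*)$, one has $J\bhy_*=\left(\begin{smallmatrix}-b_1&-d_1\\1&b_1\end{smallmatrix}\right)$; choosing the second component of the eigenvector to be $1$, the first component is $\lh^1_\pm-b_1$ from the second row of the eigenvector equation, which equals $\pm\frac{2+3\al}{4}\sqrt{2\uh(\tht_0^*)}+\frac{2+\al}{4}\sqrt{2\uh(\tht_0^*)}\sin\psi_0^*=\frac{(2+\al)\sin\psi_0^*\pm(2+3\al)}{4}\sqrt{2\uh(\tht_0^*)}$, matching the claimed formula. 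The same procedure with $\bhe_*$ (replacing $2+\al$ by $2-\al$ in the off-diagonal and $-\al(\al+1)\uh$ by $-\uh_{\tht\tht}$ in the corner, and using $\lh^2_\pm=\pm\frac12\sqrt{\Delta(\tht_0^*)}$) yields $\eh^2_\pm=\left(-\frac{2-\al}{4}\sqrt{2\uh(\tht_0^*)}\sin\psi_0^*\pm\frac12\sqrt{\Delta(\tht_0^*)},\,1\right)^T$, again matching. One should note that when $\sin\psi_0^*=1$ the first eigenvector formula for $\eh^1_-$ gives first component $\frac{(2+\al)-(2+3\al)}{4}\sqrt{2\uh}=-\frac{\al}{2}\sqrt{2\uh}\neq 0$, so the vectors are genuinely two-dimensional and linearly independent; this is automatic since the eigenvalues are distinct.

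The only genuine content here is the two arithmetic identities $\frac{(2+\al)^2}{8}+\al(\al+1)=\frac{(2+3\al)^2}{8}$ and $\frac{(2-\al)^2}{8}\uh+\uh_{\tht\tht}=\frac14\Delta$, together with noting $\sin^2\psi_0^*=1$; everything else is direct substitution. There is no real obstacle — the "hard part", such as it is, is merely bookkeeping the signs of $J$ and of $\sin\psi_0^*$ correctly and recognizing the definition of $\Delta$ in the second determinant. I would therefore present the proof as: (1) reduce to trace/determinant of the $2\times2$ blocks using the structure of $J$; (2) compute the two determinants via the algebraic identities above; (3) read off eigenvalues as $\pm\sqrt{-\det}$ and conclude hyperbolicity; (4) solve the linear eigenvector equations to get $\eh^i_\pm$.
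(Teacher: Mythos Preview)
Your proposal is correct and is precisely the direct computation the paper has in mind; the paper itself gives no proof, merely the line ``Direct computations give us the following lemma.'' Your reduction via trace/determinant of the traceless $2\times 2$ blocks, together with the two algebraic identities and the observation $\sin^2\psi_0^*=1$, is exactly the intended verification.
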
 

Since $U$ is $(-\alpha)$-homogeneous, when $x(t)=(r(t),\theta(t))$ is solution of 
\eqref{eq: Lag equation},  so is $x_h(t)=(r_h(t), \theta_h(t)):=(h^{-\frac{2}{2+\alpha}}r(ht), \theta(ht))$,  for any $h>0$. This means 
\begin{equation} \dot{z}_h(t)=J\nabla H_h(z_h(t)), \label{eq: Hamiltonian h}  \end{equation}
where $z_h(t)= (p_{1,h}, p_{2,h}, r_h, \tht_h)^T(t)$ with 
$$ p_{1,h}(t)=\dot{r}_h(t)=h^{\frac{\alpha}{2+\alpha}}\dot{r}(ht); \;\;  p_{2,h}(t)=r^2_h(t)\dot{\theta}_h(t)=h^{\frac{\alpha-2}{\alpha+2}}r^2(ht)\dot{\theta}(ht), $$
and 
$$H_h(z_h(t))= \ey(p_{1,h}^2(t) + r_h^{-2}(t)p^2_{2,h}(t))- r^{-\al}_h(t)\uh(\tht_h(t)).$$
 
Let $h=1$ and differentiate \eqref{eq: Hamiltonian h} with respect to $t$, we get a solution of \eqref{eq: linearied}: 
$$ \zt_1(t):=\dot{z}_1(t)=(\ddot{r}, 2r\dot{r}\dot{\theta}+r^2\ddot{\theta}, \dot{r},\dot{\theta})^T(t).$$
Meanwhile by differentiating \eqref{eq: Hamiltonian h} with respect to $h$, we get
\begin{equation} \frac{d \dot{z}_h}{dh}|_{h=1}(t)=J\nabla^2 H(z_1(t)) \left(\frac{d z_h}{dh}|_{h=1}(t) \right). \label{3.7}  \end{equation}
Hence $\zt_3(t):= \frac{d z_h}{dh}|_{h=1}(t)$ is another solution of \eqref{eq: linearied}. Define
$$\zt_2(t):=\zt_3(t)-t\zt_1(t) =\left(\frac{\alpha}{2+\alpha}\dot{r}, \frac{\alpha-2}{\alpha+2}r^2\dot{\theta},-\frac{\alpha}{2+\alpha}r,0 \right)^T(t).$$


Under the time parameter $\tau$, using $R(\tau)$ given in \eqref{eq: R}, we find the following two solutions of the linear system \eqref{eq: linearied normalized}:
\begin{equation}
\label{eq: eta 1} \eta_1(\tau)= R(\tau)\zt_1(\tau)=r^{-\frac{2+3\al}{4}}(\tau) (u^2-\alpha \uh(\theta), \uh_{\tht}(\theta),v,u)^T(\tau),
\end{equation}
\begin{equation}
\label{eq: eta 2} \eta_2(\tau)= R(\tau)\zt_2(\tau)= r^{\frac{2-\alpha}{4}}(\tau) \left(\frac{\alpha v}{2+\alpha}, \frac{\alpha-2}{\alpha+2}u, -\frac{2}{2+\alpha},0 \right)^T(\tau).
\end{equation}

\begin{defi} \label{defi: V Lag}
For each $\tau \in \rr$, we define $V(\tau):=\text{span}\{\eta_1(\tau),\eta_2(\tau)\}$
 as the linear space generated by $\eta_1(\tau)$ and $\eta_2(\tau)$ defined as above.
\end{defi}
Notice that $\eta_1(\tau)$ and $\eta_2(\tau)$ are linear independent if and only if $x(t)$ is a non-homothetic solution.

Let $(\mathbb{R}^4,\omega)$ with $\omega(x,y)=(Jx,y) $ being the standard symplectic form on $\rr^4$. A subspace $V \subset \rr^2$ is \emph{Lagrangian}, if $\text{dim}(V)=2$ and $\omega|_V=0$.  We denote by $\text{Lag}(\rr^4)$ the Lagrangian Grassmannian, i.e. the set of all Lagrangian subspaces of $(\rr^4,\omega)$. For any $V \in \text{Lag}(\rr^4)$, let $P_V$ be the orthogonal projection of $\rr^4$ to $V$, then 
$$ \text{dist}(W, W^*):= \|P_W-P_{W^*}\|, \text{ for any } W, W^* \in \text{Lag}(\rr^4), $$
gives a complete metric on $Lag(\rr^4)$. Here $\|\cdot \|$ represents the metric on the space of bounded linear operators from $\rr^4$ to itself.

\begin{lem}\label{lem: V Lag}  
If $x(t)$ is a non-homothetic zero energy solution, $V(\tau)\in C^0(\rr, \textnormal{Lag}(\rr^4))$.\end{lem}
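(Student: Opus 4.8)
The plan is to prove two things: first, that $V(\tau)$ is a Lagrangian subspace of $(\rr^4,\omega)$ for every $\tau\in\rr$, and second, that $\tau\mapsto V(\tau)$ is continuous into $\textnormal{Lag}(\rr^4)$. For the Lagrangian property, recall that $\eta_1(\tau)$ and $\eta_2(\tau)$ are solutions of the linear Hamiltonian system \eqref{eq: linearied normalized}, and since $J\bh(\tau)$ is Hamiltonian, the symplectic form is preserved along the flow: $\frac{d}{d\tau}\omega(\eta_1(\tau),\eta_2(\tau))=0$. So it suffices to check $\omega(\eta_1(\tau),\eta_2(\tau))=0$ at a single convenient $\tau$, or better, to compute it directly from the explicit formulas \eqref{eq: eta 1}–\eqref{eq: eta 2}. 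Writing $\eta_1=r^{-\frac{2+3\al}{4}}(u^2-\al\uh,\uh_\tht,v,u)^T$ and $\eta_2=r^{\frac{2-\al}{4}}(\frac{\al v}{2+\al},\frac{\al-2}{\al+2}u,-\frac{2}{2+\al},0)^T$, the prefactors $r^{-\frac{2+3\al}{4}}\cdot r^{\frac{2-\al}{4}}=r^{-\al}$ combine into a common nonzero scalar, and $\omega(\eta_1,\eta_2)=(J\eta_1,\eta_2)$ reduces to a short bilinear computation: the first two components of $\eta_1$ pair with the last two of $\eta_2$ (which are $-\frac{2}{2+\al}$ and $0$) and vice versa. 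This should collapse, after using the energy identity \eqref{eq: energy 0} $u^2+v^2=2\uh(\tht)$ only if needed, to exactly $0$; I expect the $v$-terms to cancel against the $(u^2-\al\uh)$ term and the $u,\uh_\tht$ contributions to vanish against the zero entry. Combined with Definition \ref{defi: V Lag} and the stated fact that $\eta_1,\eta_2$ are linearly independent precisely for non-homothetic solutions, this gives $\dim V(\tau)=2$ and $\omega|_{V(\tau)}=0$, i.e. $V(\tau)\in\textnormal{Lag}(\rr^4)$ for all $\tau$.

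For continuity, the point is that $\eta_1(\tau)$ and $\eta_2(\tau)$ are each $C^0$ (indeed $C^\infty$) curves in $\rr^4$, being explicit smooth functions of the orbit $z(\tau)=(v,u,r,\tht)(\tau)$ in McGehee coordinates, which is itself a smooth solution of \eqref{eq: McGehee 2} with $r(\tau)>0$ for all finite $\tau$. A standard fact about the Lagrangian Grassmannian is that if $W_1(\tau),W_2(\tau)$ are continuous curves of vectors that remain linearly independent and span a Lagrangian plane for each $\tau$, then $\tau\mapsto\textnormal{span}\{W_1(\tau),W_2(\tau)\}$ is continuous in the gap metric $\textnormal{dist}(W,W^*)=\|P_W-P_{W^*}\|$: the orthogonal projection $P_{V(\tau)}$ can be written explicitly via the Gram matrix of $\{\eta_1(\tau),\eta_2(\tau)\}$, which is invertible (nonzero determinant) by linear independence, and matrix inversion and multiplication are continuous operations. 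So I would assemble the $4\times2$ matrix $E(\tau)=[\eta_1(\tau)\ \eta_2(\tau)]$, note $G(\tau)=E(\tau)^TE(\tau)$ is continuous and invertible, and conclude $P_{V(\tau)}=E(\tau)G(\tau)^{-1}E(\tau)^T$ is continuous in $\tau$, hence $V(\tau)\in C^0(\rr,\textnormal{Lag}(\rr^4))$.

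The main obstacle, such as it is, is the algebraic verification that $\omega(\eta_1(\tau),\eta_2(\tau))=0$ — it is purely computational but one must be careful with the signs and the exact form of $J$ as given in \eqref{eq: Hamiltonian equation}, and one should double-check whether the energy identity is actually needed or whether the vanishing is identically true from the homogeneity structure (it should follow from the general principle that $\zt_1,\zt_2$ come from the $\rr_+$-scaling action on solutions, whose infinitesimal generators are automatically $\omega$-isotropic along a solution because the scaling is a conformal symplectic symmetry, but giving the one-line direct computation is cleaner and self-contained). Everything else is routine once linear independence for non-homothetic solutions is granted, which the excerpt already asserts just after Definition \ref{defi: V Lag}.
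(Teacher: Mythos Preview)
Your approach is correct and essentially identical to the paper's: the paper simply records the direct computation
\[
\omega(\eta_1,\eta_2)=\frac{2\al}{2+\al}\,r^{-\al}\Big(\tfrac12(u^2+v^2)-\uh(\tht)\Big),
\]
and invokes the zero-energy identity \eqref{eq: energy 0}. One small correction to your expectations: the vanishing is \emph{not} identically true from homogeneity alone---the energy identity is genuinely needed, so your parenthetical speculation that it might drop out without \eqref{eq: energy 0} is off (the scaling symmetry is only conformally symplectic, and the conformal factor is exactly what produces the energy term). Your continuity argument via the Gram matrix is correct and more explicit than the paper, which leaves continuity implicit.
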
 
\begin{proof} By a direct computation,
$$ \om(\eta_1, \eta_2) = \frac{2\al}{2+\al} r^{-\al} \left( \ey(u^2 +v^2) -\uh(\tht)\right).$$
Then the result follows from \eqref{eq: energy} and $x(t)$ with $0$ energy.
\end{proof}

We will study the limit of $V(\tau)$, as $\tau$ goes to $T^*$. For it to exist,  $J\bh_{*}$ needs to be hyperbolic, and the precise limit depends on how the corresponding heteroclinic orbit $(\psi, \tht)(\tau)$ approaches to the equilibrium $(\psi^*_0, \tht^*_0)$ on the collision manifold. 

When $\Delta(\tht_0^*)>0$, by Lemma \ref{lem: equil eigenvalue} and \ref{lem: equi locally}, $(\psi, \tht)(t)$ converges to $(\psi^*_0, \tht^*_0)$ either along the subspace $\langle e^*_- \rangle$ or $\langle e^*_+ \rangle$, where $e^*_{\pm} = e_{\pm}(\psi_0^*, \tht_0^*)$, see Notation \ref{dfn: eigen value vector}. 

\begin{prop}\label{prop: lim V} 
Assume $\Delta(\tht_0^*)>0$ and $\uh_{\tht \tht}(\tht_0^*) \ne 0$, 
when $(\psi, \tht)(\tau) \to (\psi^*_0, \tht^*_0)$ along $\langle e^*_{\pm}\rangle$, as $\tau \to T^*$,  
\begin{equation} \lim_{\tau \to T^*} V(\tau) = \text{span}\{ \eh^1_{j(\psi^*_0)}, \eh^2_{\pm}\}, \; \text{ where } \; \eh^1_{j(\psi^*_0)}=\begin{cases}
\eh^1_+ & \text{ if} \; \psi^*_0=-\pi/2\\
\eh^1_- & \text{ if} \; \psi^*_0=\pi/2
\end{cases}.
\end{equation}

\end{prop}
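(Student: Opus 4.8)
The plan is to track the two explicit solutions $\eta_1(\tau)$ and $\eta_2(\tau)$ of \eqref{eq: linearied normalized} as $\tau \to T^*$ and show that, after renormalization, each one converges to an eigenvector of the limiting block-diagonal system $J\bh_* = J\bhy_* \diamond J\bhe_*$. Since $\bh_* = \bhy_* \diamond \bhe_*$ is block-diagonal in the splitting $\rr^4 = \rr^2 \oplus \rr^2$ (coordinates $(1,3)$ and $(2,4)$ in the $\diamond$-convention), I would first observe that the limit of any solution of the asymptotically autonomous system \eqref{eq: linearied normalized}, if it exists as a line, must be spanned by an eigenvector of $J\bh_*$; and when $\Delta(\tht_0^*)>0$ and $\uh_{\tht\tht}(\tht_0^*)\ne 0$ both $J\bhy_*$ and $J\bhe_*$ are hyperbolic by Lemma \ref{lem: B hat 1}, so the only candidate limits for $V(\tau)$ are spans of pairs $\{\eh^1_{\pm}, \eh^2_{\pm}\}$. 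The work is to pin down which sign is selected in each block.

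For the $\eh^1$-block: from \eqref{eq: eta 1}, $\eta_1(\tau) = r^{-\frac{2+3\al}{4}}(u^2-\al\uh(\tht), \uh_\tht(\tht), v, u)^T$. Using $dr/d\tau = rv$ one has $(r^{-\frac{2+3\al}{4}})' = -\frac{2+3\al}{4} v \, r^{-\frac{2+3\al}{4}}$, and $v \to \sqrt{2\uh(\tht_0^*)}\sin\psi_0^*$. So $\|\eta_1(\tau)\|$ behaves like $r^{-\frac{2+3\al}{4}}$, which blows up (if $\psi_0^*=-\pi/2$, i.e. $v\to$ negative, so $r\to 0$) or decays (if $\psi_0^*=\pi/2$). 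The normalized vector $\eta_1/\|\eta_1\|$ has direction converging, after projecting onto the $(1,3)$-plane, to the eigendirection of $J\bhy_*$ whose eigenvalue has the sign forcing this growth rate — i.e. $\eh^1_+$ when $\psi_0^*=-\pi/2$ (the system flows backward toward the equilibrium, picking the unstable direction of $J\bhy_*$), and $\eh^1_-$ when $\psi_0^*=\pi/2$. I would make this precise by computing $\eta_1'/\|\eta_1\|$ and showing the angular part satisfies a Riccati-type equation on the projective line whose only attracting fixed point (in the relevant time direction) is $[\eh^1_{j(\psi_0^*)}]$; alternatively, one checks directly that $\lim (u^2-\al\uh(\tht))/v$ and $\lim u/v$ give exactly the ratio of components of $\eh^1_{j(\psi_0^*)}$ from Lemma \ref{lem: B hat 1}, using \eqref{eq: lim v u tht type I} and the asymptotics of $\tht'(\tau)$.

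For the $\eh^2$-block: from \eqref{eq: eta 2}, the $(2,4)$-components of $\eta_2$ are $r^{\frac{2-\al}{4}}(\frac{\al-2}{\al+2}u, 0)^T$ — the $\theta$-component vanishes identically. So $\eta_2$ alone does not see the $\eh^2_{\pm}$ distinction directly; instead I would argue that $V(\tau) = \mathrm{span}\{\eta_1,\eta_2\}$ is two-dimensional and Lagrangian (Lemma \ref{lem: V Lag}), and since its limit must be a Lagrangian subspace invariant under $J\bh_*$ and spanned by eigenvectors, and since it already contains $\lim[\eta_1] = \eh^1_{j(\psi_0^*)}$ (an $\eh^1$-direction), the complementary direction in the limit must be one of $\eh^2_{\pm}$. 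To decide the sign: use that $(\psi,\tht)(\tau)$ approaches $(\psi_0^*,\tht_0^*)$ along $\langle e^*_{\pm}\rangle$ by hypothesis, compute the leading-order behavior of $\tht(\tau)-\tht_0^*$ and $\psi(\tau)-\psi_0^*$ (which decay like $e^{\lmd_{\pm}(\psi_0^*,\tht_0^*)\tau}$), feed this into $\eta_1(\tau)$ to extract its subleading term in the $(2,4)$-block, and show that the subleading direction combined with $\eta_2$ spans $\eh^2_{\pm}$. The key computational input is that $\sqrt{\Delta}/\sqrt{2\uh}$ appears in both $e_\pm$ (Lemma \ref{lem: equil eigenvalue}) and $\eh^2_\pm$ (Lemma \ref{lem: B hat 1}), so the sign $\pm$ propagates directly.

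The main obstacle I expect is the $\eh^2$-block sign determination: because $\eta_2$ has a vanishing $\theta$-component, one cannot read off the limit from a naive ratio, and one must genuinely use the rate at which the heteroclinic orbit enters the equilibrium along $\langle e^*_\pm\rangle$ — i.e. control the subleading asymptotics of $\eta_1$ in the $(2,4)$-plane and verify it is not dominated by the $\eh^1$-contribution. Handling this cleanly likely requires a normal-form or center-stable-manifold style estimate for \eqref{eq: vector field collision mfd} near the equilibrium, controlling $(\uh_\tht(\tht(\tau)), u(\tau))$ to the precision needed; everything else reduces to the explicit formulas in \eqref{eq: eta 1}, \eqref{eq: eta 2}, \eqref{eq: lim v u tht type I} and Lemma \ref{lem: B hat 1}.
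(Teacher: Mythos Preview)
Your overall plan is sound --- track $V(\tau)$ through the asymptotics of $\eta_1,\eta_2$ and show the limit is a span of eigenvectors of $J\bh_*$ --- and your identification of the $\eh^1$-direction is correct: both $\eta_1/|\eta_1|$ and $\eta_2/|\eta_2|$ converge to the line $\langle \eh^1_{j(\psi_0^*)}\rangle$, so you rightly see that the $\eh^2$-sign has to come from somewhere else.

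Where you overshoot is in the $\eh^2$-block. You anticipate needing subleading expansions, normal forms, or center-manifold estimates, and you worry about ``domination by the $\eh^1$-contribution'' in the $(2,4)$-components of $\eta_1$. But $\eh^1_\pm$ live entirely in the $(1,3)$-plane (that is the $\diamond$-splitting), so the $(2,4)$-components of $\eta_1$, namely $r^{-\frac{2+3\al}{4}}(\uh_\tht(\tht),u)$, are pure $\eh^2$-block data. Their ratio $\uh_\tht(\tht)/u$ is thus exactly the quantity that decides between $\eh^2_+$ and $\eh^2_-$, and it has a finite limit computable by a single application of L'Hospital:
\[
\lim_{\tau\to T^*}\frac{\uh_\tht(\tht)}{u}
=\lim_{\tau\to T^*}\frac{\uh_{\tht\tht}(\tht)\,\tht'}{(\sqrt{2\uh(\tht)}\cos\psi)'}
=-\frac{\uh_{\tht\tht}(\tht_0^*)}{\sqrt{2\uh(\tht_0^*)}\sin\psi_0^*}\cdot\lim_{\tau\to T^*}\frac{\tht'}{\psi'},
\]
and the last limit is read off directly from the hypothesis that $(\psi,\tht)(\tau)\to(\psi_0^*,\tht_0^*)$ along $\langle e_\pm^*\rangle$, using the components of $e_\pm^*$ in Lemma~\ref{lem: equil eigenvalue}. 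The sign $\pm$ passes straight through to give the components of $\eh^2_\pm$ in Lemma~\ref{lem: B hat 1}. No rate estimates or normal forms are needed; the only dynamical input is the tangent direction of approach.

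The paper packages this as a single computation of the bivector $\eta_1\wedge\eta_2$: one writes it out in the basis $\e_i\wedge\e_j$, factors out the scalar $u/((2+\al)r^\al)$ that carries the common decay, and is left with coefficients in which the only non-elementary limit is precisely $\uh_\tht(\tht)/u$. After plugging in the L'Hospital value, the limiting bivector is checked to equal $2\,\eh^1_{j(\psi_0^*)}\wedge\eh^2_\pm$. This wedge-product approach is tidy because it sidesteps the collapse of $\eta_1/|\eta_1|$ and $\eta_2/|\eta_2|$ onto the same line, but your approach of isolating the $(2,4)$-part of $\eta_1$ is equivalent and equally short once you notice that the ``naive ratio'' $\uh_\tht(\tht)/u$ is exactly what is required.
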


We first give a proof of the above proposition using the following lemma. 

\begin{lem} \label{lem: lim uh_tht u} 
 Assume $\Delta(\tht^*_0) >0$ and $\uh_{\tht \tht}(\tht^*_0) \ne 0$,  if $(\psi, \tht)(\tau) \to (\psi^*_0, \tht^*_0)$ along $\langle e_{\pm}^* \rangle$, as $\tau \to T^*$,  then 
 $$\lim_{\tau \to T^*} \frac{\uh_{\tht}(\tht)}{u} =-\lmd_\mp(\psi^*_0, \tht^*_0) =  \frac{2-\al}{4}\sqrt{2\uh(\tht^*_0)} \sin \psi^*_0 \pm \ey \sqrt{\Delta(\tht^*_0)}.$$

 \end{lem}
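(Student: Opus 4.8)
The plan is to push everything down to the collision manifold and read off the limit from the linearization of \eqref{eq: vector field collision mfd} at the equilibrium $(\psi_0^*,\tht_0^*)$. First I would record two facts. By the fourth equation of \eqref{eq: McGehee 2}, $u=\tht'$ along $z(\tau)$; and since the relation $u^2+v^2=2\uh(\tht)$ is preserved by \eqref{eq: McGehee 2} when $H=0$, the projection $(\psi,\tht)(\tau)=\pi(z)(\tau)$ of the zero energy orbit is an honest solution of the planar system \eqref{eq: vector field collision mfd}. Hence $\uh_\tht(\tht)/u=\uh_\tht(\tht)/\tht'$ is a $0/0$ indeterminate form as $\tau\to T^*$: $\tht(\tau)\to\tht_0^*$ with $\uh_\tht(\tht_0^*)=0$, and $\tht'(\tau)\to 0$ because $\psi_0^*=\pm\pi/2$. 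The strategy is to replace numerator and denominator by their first order behaviour near the equilibrium.

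The core of the proof is the identity $\displaystyle\lim_{\tau\to T^*}\tht'(\tau)/\big(\tht(\tau)-\tht_0^*\big)=\lmd_\pm$, where $\lmd_\pm=\lmd_\pm(\psi_0^*,\tht_0^*)$. To establish it, set $\xi(\tau):=(\psi(\tau)-\psi_0^*,\tht(\tau)-\tht_0^*)^T\to 0$. As $\uh\in C^2$, the vector field in \eqref{eq: vector field collision mfd} is $C^1$ with derivative $M(\psi_0^*,\tht_0^*)$ at the equilibrium, so $\xi'=M\xi+g(\xi)$ with $g(\xi)=o(|\xi|)$. Since $\Delta(\tht_0^*)>0$ and $\uh_{\tht\tht}(\tht_0^*)\ne0$, Lemma \ref{lem: equil eigenvalue} (case (a) or (c)) gives that $\lmd_\pm$ are real and nonzero and that the eigenvectors $e_\pm^*=e_\pm(\psi_0^*,\tht_0^*)$ have second coordinate equal to $1$. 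The hypothesis that $(\psi,\tht)(\tau)$ converges to $(\psi_0^*,\tht_0^*)$ along $\langle e_\pm^*\rangle$, read through Lemma \ref{lem: equi locally}, means exactly that $\hat\xi(\tau):=\xi(\tau)/|\xi(\tau)|$ tends to a unit vector $\hat e$ spanning $\langle e_\pm^*\rangle$; in particular $\tht(\tau)-\tht_0^*=\xi_2(\tau)\ne0$ near $T^*$. Dividing the second component of $\xi'=M\xi+g(\xi)$ by $\xi_2=|\xi|\hat\xi_2$ yields
\[
\frac{\tht'(\tau)}{\tht(\tau)-\tht_0^*}=\frac{\big(M\hat\xi(\tau)\big)_2+o(1)}{\hat\xi_2(\tau)}\longrightarrow \frac{(M\hat e)_2}{\hat e_2}=\lmd_\pm,
\]
the last equality because $\hat e$ is a $\lmd_\pm$-eigenvector with $\hat e_2\ne0$; this also shows $u=\tht'\ne0$ near $T^*$.

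To finish, differentiability of $\uh_\tht$ at $\tht_0^*$ together with $\uh_\tht(\tht_0^*)=0$ gives $\uh_\tht(\tht)/(\tht-\tht_0^*)\to\uh_{\tht\tht}(\tht_0^*)$, so
\[
\lim_{\tau\to T^*}\frac{\uh_\tht(\tht)}{u}=\lim_{\tau\to T^*}\frac{\uh_\tht(\tht)}{\tht-\tht_0^*}\cdot\frac{\tht-\tht_0^*}{\tht'}=\frac{\uh_{\tht\tht}(\tht_0^*)}{\lmd_\pm}.
\]
Then I would compute $\lmd_+\lmd_-=\det M(\psi_0^*,\tht_0^*)=-\uh_{\tht\tht}(\tht_0^*)\sin^2\psi_0^*=-\uh_{\tht\tht}(\tht_0^*)$ (using $\psi_0^*=\pm\pi/2$), so that $\uh_{\tht\tht}(\tht_0^*)/\lmd_\pm=-\lmd_+\lmd_-/\lmd_\pm=-\lmd_\mp$, and the explicit expression for $\lmd_\mp$ in Lemma \ref{lem: equil eigenvalue} turns this into $\frac{2-\al}{4}\sqrt{2\uh(\tht_0^*)}\sin\psi_0^*\pm\ey\sqrt{\Delta(\tht_0^*)}$, as claimed.

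The step I expect to require the most care is the assertion that ``convergence along $\langle e_\pm^*\rangle$'' yields a genuine limit $\hat\xi(\tau)\to\hat e$ (rather than merely a set of limiting directions) and that the nonlinear remainder $g$ is $o(|\xi|)$: the former is precisely where the saddle/node dichotomy of Lemma \ref{lem: equi locally} is used (in the node case $\langle e_-^*\rangle$ is the $1$-dimensional strong manifold and $\langle e_+^*\rangle$ the common tangent direction of the remaining orbits), and the latter rests on $\uh\in C^2$. Everything else is the elementary ratio bookkeeping above and the relation $\lmd_+\lmd_-=-\uh_{\tht\tht}(\tht_0^*)$.
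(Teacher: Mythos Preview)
Your proof is correct and arrives at the same conclusion via a slightly different route than the paper. The paper applies L'H\^opital's rule directly to $\uh_\tht(\tht)/u$, writes $u=\sqrt{2\uh(\tht)}\cos\psi$, and reduces the limit to $\displaystyle -\frac{\uh_{\tht\tht}(\tht_0^*)}{\sqrt{2\uh(\tht_0^*)}\sin\psi_0^*}\lim_{\tau\to T^*}\frac{\tht'}{\psi'}$; the last ratio is read off as the reciprocal of the \emph{eigenvector slope} of $e_\pm^*$ from Lemma~\ref{lem: equil eigenvalue}, and one then simplifies. You instead factor $\uh_\tht(\tht)/u=\big(\uh_\tht(\tht)/(\tht-\tht_0^*)\big)\cdot\big((\tht-\tht_0^*)/\tht'\big)$ and identify the second factor as $1/\lmd_\pm$ via the linearization, finishing with the determinant identity $\lmd_+\lmd_-=\det M(\psi_0^*,\tht_0^*)=-\uh_{\tht\tht}(\tht_0^*)$. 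Both arguments rest on the same linearization at the equilibrium; the difference is whether one extracts the eigenvector direction (the paper) or the eigenvalue (you). Your version has the pleasant feature that the final algebraic step $\uh_{\tht\tht}(\tht_0^*)/\lmd_\pm=-\lmd_\mp$ is a one-line consequence of the product of eigenvalues, whereas the paper carries the explicit formulas through; the paper's L'H\^opital step, on the other hand, bypasses the need to argue that $\tht(\tau)-\tht_0^*\ne 0$ eventually. Your flagged caveat---that ``convergence along $\langle e_\pm^*\rangle$'' yields a genuine limit for $\hat\xi(\tau)$---is handled by the saddle/node picture of Lemma~\ref{lem: equi locally}, exactly as you note; and since the ratio $(M\hat\xi)_2/\hat\xi_2$ is even in $\hat\xi$, the sign ambiguity of the limiting unit vector is harmless.
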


\begin{proof}
 We only give details for $\psi_0^*=\pi/2$ and $(\psi, \tht)(\tau)$ converges to $(\psi_0^*, \tht^*_0)$ along $\langle e^*_- \rangle$, while the others are similarly. Let $\e_i \in \rr^4$, $i=1,2,3,4$, be an orthogonal basis of $\rr^4$ with the $i$-th component equal to $1$ and the others all being zero.,

 Let $V(\tau) =\text{span}\{\eta_1(\tau), \eta_2(\tau)\}$ be defined as in Definition \ref{defi: V Lag}, then
 \begin{align*}
 \eta_1 \wg \eta_2 & = \frac{u}{(2+\al)r^{\al}} \Big\{ \Big( (2-\al)(\al \uh(\tht)-u^2)-\al v \frac{\uh_{\tht}(\tht)}{u}\Big) \e_1 \wg \e_2  \\
 & -(2-\al)u \e_1 \wg \e_3 -\al v \e_1 \wg \e_4 + \Big( (2-\al)v-2 \frac{\uh_{\tht}(\tht)}{u}\Big) \e_2 \wg \e_3 \\ 
 &+ (2-\al)u \e_2 \wg \e_4 + 2 \e_3 \wg \e_4
 \Big\}.
 \end{align*}
 By \eqref{eq: lim v u tht type I} and  Lemma \ref{lem: lim uh_tht u}, a direct computation shows

 \begin{multline}
 \label{eq: lim eta positive inf}
 \lim_{\tau \to T^*} \frac{(2+\al)r^\al}{u} \eta_1 \wg \eta_2 = \Big( \frac{\al(2-\al)}{2}\uh(\tht_0^*)+ \frac{\al}{2}\sqrt{2\uh(\tht_0^*) \Delta(\tht_0^*)} \Big) \e_1 \wg \e_2  \\
 -\al \sqrt{2\uh(\tht_0^*)} \e_1 \wg \e_4 + \Big( \frac{2-\al}{2}\sqrt{2\uh(\tht_0^*)} + \sqrt{\Delta(\tht_0^*)} \Big) \e_2 \wg \e_3 + 2 \e_3 \wg \e_4. 
 \end{multline}

Meanwhile for $\eh^1_{-}, \eh^2_{-}$ given in Lemma \ref{lem: B hat 1}, a straight forward computation shows $ 2 \eh^1_{-} \wg \eh^2_{-}$ is the same as what we got in \eqref{eq: lim eta positive inf}. This finishes our proof.
\end{proof}

 \begin{proof}[Proof of Lemma \ref{lem: lim uh_tht u}]
 We will only give the details for $T^*= +\infty$. 
 As both $\uh_{\tht}(\tht)$ and $u$ goes to $0$, when $\tau \to +\infty$, by L'Hospital's rule,
\begin{equation}
\label{eq: posi inf} \lim_{\tau \to +\infty} \frac{\uh_{\tht}(\tht)}{u} = \lim_{\tau \to +\infty} \frac{\uh_{\tht \tht}(\tht)\tht'}{(\sqrt{2\uh(\tht)} \cos \psi)'}= -\frac{\uh_{\tht \tht}(\tht^*_0)}{\sqrt{2 \uh(\tht^*_0)} \sin \psi^*_0} \lim_{\tau \to +\infty} \frac{\tht'}{\psi'} 
\end{equation}
If $(\psi, \tht)(\tau)$ converges to $(\psi^*_0, \tht^*_0)$ along $\langle e^*_- \rangle$, as $\tau \to +\infty$, by Lemma \ref{lem: equil eigenvalue},
$$ \lim_{\tau \to +\infty} \frac{\psi'}{\tht'} = \frac{2-\al}{4}+ \frac{\sin \psi^*_0}{2} \sqrt{\frac{\Delta(\tht^*_0)}{2 \uh(\tht^*_0)}}. $$
Plug this into \eqref{eq: posi inf}, we get 
$$ \lim_{\tau \to +\infty} \frac{\uh_{\tht}(\tht)}{u} = \frac{2-\al}{4}\sqrt{2\uh(\tht^*_0)} \sin \psi^*_0 - \ey \sqrt{\Delta(\tht^*_0)} = -\lmd_+(\psi_0^*, \tht_0^*). $$
The second equality follows from Lemma \ref{lem: equil eigenvalue}. 

Similarly if $(\psi, \tht)(\tau)$ converges to $(\psi_0^*, \tht_0^*)$ along $\langle e^*_+ \rangle$, as $\tau \to +\infty$, then 
$$ \lim_{\tau \to +\infty} \frac{\psi'}{\tht'} = \frac{2-\al}{4}- \frac{\sin \psi_0^*}{2} \sqrt{\frac{\Delta(\tht_0^*)}{2 \uh(\tht_0^*)}},$$
and
$$ \lim_{\tau \to +\infty} \frac{\uh_{\tht}(\tht)}{u} = \frac{2-\al}{4}\sqrt{2\uh(\tht^*_0)} \sin \psi^*_0 + \ey \sqrt{\Delta(\tht^*_0)}= -\lmd_-(\psi^*_0, \tht^*_0). $$
\end{proof}

\section{Connect the Morse and oscillation indices by Maslov indices} \label{sec: Morse Maslov indices}
In this section, except the last proof, which deals with the homothetic solution, we always assume $x(t)$, $t \in (T^-, T^+)$,  is a non-homothetic zero energy solution of \eqref{eq: Lag equation} with $z(t)$ being the corresponding zero energy orbit of \eqref{eq: Hamiltonian equation} and $\pi(z)(\tau)$ the heteroclinic orbit on $\mf$ satisfying $\lim_{\tau \to \pm\infty}\pi(z)(\tau)= (\psi_0^{\pm}, \tht_0^{\pm}).$

We need the \emph{Maslov index} to connect the Morse and oscillation indices. For details of the Maslov index, see \cite{CLM} or Section \ref{sec: appendix B}. Let $\gamma(t, t_1)$ be the fundamental solution of the linear Hamiltonian equation \eqref{eq: linearied}:
\begin{equation}  
\dot{\gamma}(t,t_1)=J\nabla^2 H(z(t))\gamma(t,t_1),\quad  \gamma(t_1,t_1)=I_4. 
\end{equation}
 For any $t_1<t_2$, we define the Maslov index of $x(t), t \in [t_1, t_2]$ as 
\begin{equation}
\label{eq: Maslov index} \mu(V_d, \gm(t, t_1)V_d; [t_1, t_2]), \; \text{ where } V_d:=\rr^2 \oplus 0.
\end{equation}
By the \emph{Morse Index Theorem} (see \cite{HS}) 
\begin{equation} 
m^-(x; t_1, t_2)+2=\mu(V_d, \gamma(t,t_1)V_d; [t_1,t_2]).
\end{equation}

Under the time parameter $\tau$, the corresponding $\gm(\tau, \tau_1):=\gm(t(\tau), \tau_1)$, where $t_1=t(\tau_1)$, is the fundamental solution of \eqref{eq: linearied tau}, and $\hat{\gamma}(\tau,\tau_1)=R(\tau)\gamma(\tau,\tau_1)R^{-1}(\tau_1)$ ($R(\tau)$ is the matrix defined in \eqref{eq: R}) is the fundamental solution of equation \eqref{eq: linearied normalized}:
\begin{equation} \label{eq: linearized gm hat}
\hat{\gamma}'(\tau,\tau_1)=J\hat{B}(\tau)\hat{\gamma}(\tau,\tau_1), \quad  \hat{\gamma}(\tau_1,\tau_1)=I_4. 
\end{equation}
\begin{lem} When $t_i= t(\tau_i)$, $i=1,2$, 
$$ \mu(V_d, \hat{\gamma}(\tau,\tau_1)V_d; [\tau_1,\tau_2]) =\mu(V_d, \gm(t, t_1)V_d; [t_1, t_2]). $$
\end{lem}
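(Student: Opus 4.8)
The plan is to reduce the statement to two standard facts about the Maslov index: its invariance under orientation-preserving reparametrization of the parameter interval, and its invariance under the action of a continuous path of symplectic matrices on both Lagrangian arguments (see \cite{CLM} or Section \ref{sec: appendix B}).

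First I would record that $t = t(\tau)$, defined by $dt = r^{1+\frac{\al}{2}}\,d\tau$ with $r(\tau)>0$, is a smooth strictly increasing diffeomorphism from $[\tau_1,\tau_2]$ onto $[t_1,t_2]$. Hence $\tau \mapsto \gm(t(\tau),t_1)V_d$ is merely a reparametrization of the path $t \mapsto \gm(t,t_1)V_d$, and by reparametrization invariance of the Maslov index,
$$ \mu(V_d, \gm(t,t_1)V_d; [t_1,t_2]) = \mu\big(V_d, \gm(t(\tau),t_1)V_d; [\tau_1,\tau_2]\big). $$

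Next I would exploit the fact that $R(\tau)=\text{diag}(r^{\frac12+\frac{\al}{4}}, r^{\frac{\al}{4}-\frac12}, r^{-\frac12-\frac{\al}{4}}, r^{\frac12-\frac{\al}{4}})(\tau)$ is diagonal, so it maps $V_d = \langle \e_1,\e_2\rangle$ onto itself for every $\tau$; in particular $R^{-1}(\tau_1)V_d = V_d$. Combined with $\hat{\gamma}(\tau,\tau_1)=R(\tau)\gamma(t(\tau),t_1)R^{-1}(\tau_1)$ this gives
$$ \hat{\gamma}(\tau,\tau_1)V_d = R(\tau)\,\gm(t(\tau),t_1)V_d. $$
Since $R(\tau)$ is a continuous path of symplectic matrices (as stated below \eqref{eq: R}), the symplectic invariance of the Maslov index yields
$$ \mu\big(R(\tau)V_d, R(\tau)\gm(t(\tau),t_1)V_d; [\tau_1,\tau_2]\big) = \mu\big(V_d, \gm(t(\tau),t_1)V_d; [\tau_1,\tau_2]\big), $$
and because $R(\tau)V_d = V_d$ the left-hand side is exactly $\mu(V_d, \hat{\gamma}(\tau,\tau_1)V_d; [\tau_1,\tau_2])$. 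Chaining this with the reparametrization identity above finishes the proof.

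I do not expect a serious obstacle here; the only points to handle carefully are that $R(\tau)$ genuinely preserves $V_d$ (immediate from it being diagonal) and that the two invariance properties of the Maslov index are stated in the form used — both are contained in the basic list of properties in \cite{CLM} and will be recalled in Section \ref{sec: appendix B}.
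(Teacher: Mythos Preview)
Your proof is correct and follows essentially the same approach as the paper's: reparametrization invariance for the change from $t$ to $\tau$, then symplectic invariance together with the observation that the diagonal matrix $R(\tau)$ preserves $V_d$. The only cosmetic difference is that the paper multiplies both Lagrangian arguments by $R^{-1}(\tau)$ rather than $R(\tau)$, which is immaterial.
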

\begin{proof}
First as the Maslov index is invariant under the change of time parameter, 
\begin{equation} 
\mu(V_d, \gm(t, t_1)V_d; [t_1, t_2])=\mu(V_d, \gamma(\tau,\tau_1)V_d; [\tau_1,\tau_2]),  
\end{equation}
Meanwhile
\begin{equation} 
\begin{split}
\mu(V_d, &\hat{\gamma}(\tau,\tau_1)V_d; [\tau_1,\tau_2]) =\mu(V_d, R(\tau)\gamma(\tau,\tau_1)R^{-1}(\tau_1)V_d; [\tau_1,\tau_2])\\ 
 & = \mu(R^{-1}(\tau)V_d, \gamma(\tau,\tau_1)R^{-1}(\tau_1)V_d; [\tau_1,\tau_2]) = \mu(V_d, \gamma(\tau,\tau_1)V_d; [\tau_1,\tau_2]).
\end{split}
\end{equation}  
The last equality follows from the fact that $R^{-1}(\tau)V_d = V_d$, for any $\tau$, as $R(\tau)$ is a diagonal matrix.  
\end{proof}
By the above lemma, 
  \begin{equation} m^-(x; t_1,t_2)+2=\mu(V_d, \hat{\gamma}(\tau,\tau_1)V_d; [\tau_1,\tau_2]). \label{4.8} \end{equation}
Then for any sequences $\tau^-_n < \tau^+_n$ satisfying $\lim_{n \to +\infty}\tau^{\pm}_n = \pm \infty$, 
\begin{equation}
m^-(x)+2= \lim_{n \to +\infty} \mu(V_d, \hat{\gamma}(\tau,\tau_1)V_d; [\tau^-_n,\tau^+_n]).
\end{equation}

To compute the above limit, we need another Maslov index. For any $\tau \in \rr$, define the stable/unstable subspace   $V^+(\tau)/V^-(\tau)$ of the linear system \eqref{eq: linearized gm hat} as 
\begin{align*}
 V^\pm(\tau) & := \{ v \in \rr^4| \; \lim_{\sg \to \pm\infty} \gh(\sg, \tau) v= 0  \}.
\end{align*}
Notice that $V^{\pm}(\tau) = \hat{\gm}(\tau, \sg) V^{\pm}(\sg)$, for any two $\sg, \tau  \in \rr.$ 

\begin{defi} \label{dfn: Maslov index} We define the \textbf{Maslov index} $\mu(x)$ of $x$ as
\begin{equation} 
\mu(x):=\mu(V_d,V^-(\tau); \mathbb{R}) = \lim_{T \to +\infty} \mu(V_d, V^-(\tau); [-T, T]).  
\end{equation}  
\end{defi} 
The index $\mu(x)$ defined above was introduced in the study of heteroclinic orbits (see \cite{HO}, \cite{HP} or the Appendix for more details).

At this moment it is not clear whether $\mu(x)$ is well defined. We will show this shortly. Following the notations from the previous section, we set $T^*= \pm \infty$. Recall that $J\bh_{*}= \lim_{\tau \to T^*} J\bh(\tau)$ is a hyperbolic matrix, when $\Delta(\tht^*_0) >0$. Let $V^{+}(J\bh_{*})$ and $V^-(J\bh_{*})$ be the $J\bh_{*}$ invariant  subspaces of $\rr^4$ corresponding to eigenvalues with positive and negative real part respectively. By Lemma \ref{lem: B hat 1}, $$ V^\pm(J\bh_{*}) = \text{span}\{\eh^1_\pm, \eh^2_\pm\}. $$

In the following, we may need to specify the value of $T^*$, in those cases we set $J\bh_{\pm}:= \lim_{\tau \to \pm \infty} J\bh(\tau)$. The next lemma follows from \cite[Theorem 2.1]{AM}.

\begin{lem}
\label{lem: Vs Vu} When $J\bh_{\pm}$ are hyperbolic matrices.
\begin{enumerate}
\item[(a).] $V^\pm(\tau)$ is the only linear subspace of $\rr^4$ satisfying $\gh(\sg, \tau) V^\pm(\tau) \to V^\mp(J\bh_\pm)$, as $\sg \to \pm\infty$. 
\item[(b).] if a linear subspace $W$ of $\rr^4$ is topologically complement of $V^+(\tau)$ ( or $V^-(\tau)$), then for any $v \in W \setminus \{0 \}$, $|\gh(\sg, \tau) v| \to +\infty$ exponentially fast, as $\sg \to +\infty$ ( or $\tau \to -\infty$), and $\gh(\sg, \tau)W \to V^+(J\bh_+)$ ( or  $V^-(J\bh_-)$) at the same time. 
\end{enumerate}
\end{lem}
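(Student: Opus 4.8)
The plan is to read the linear system \eqref{eq: linearized gm hat} as an \emph{asymptotically hyperbolic} Hamiltonian system and feed it into the theory of exponential dichotomies. By Section \ref{sec: asymptotic linear} we have $J\bh(\tau)\to J\bh_\pm$ as $\tau\to\pm\infty$, and under the running hypothesis each $J\bh_\pm$ is hyperbolic; being a $4\times4$ infinitesimally symplectic matrix with no eigenvalue on the imaginary axis, its spectrum splits evenly across that axis, so $\dim V^\pm(J\bh_\pm)=2$ (cf. Lemma \ref{lem: B hat 1}). Since $J\bh(\tau)-J\bh_\pm\to 0$, I would invoke \cite[Theorem 2.1]{AM} (the roughness of dichotomies) to conclude that \eqref{eq: linearized gm hat} carries an exponential dichotomy on each half-line $[\tau_0,+\infty)$ and $(-\infty,\tau_0]$, with stable bundle $\tau\mapsto V^+(\tau)$ on the former and unstable bundle $\tau\mapsto V^-(\tau)$ on the latter --- each a $2$-plane depending continuously (in fact smoothly) on $\tau$, so that $\mu(x)$ in Definition \ref{dfn: Maslov index} is well posed. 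The same theorem also yields that $V^+(\sg)\to V^-(J\bh_+)$ and that the complementary bundle $U(\sg)$ of that dichotomy satisfies $U(\sg)\to V^+(J\bh_+)$ as $\sg\to+\infty$, with the mirror statements at $-\infty$.

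Part (a) is then immediate from $\gh(\sg,\tau)V^+(\tau)=V^+(\sg)$. For the uniqueness clause I would argue by contradiction: if a $2$-plane $W'\ne V^+(\tau)$ also satisfied $\gh(\sg,\tau)W'\to V^-(J\bh_+)$, then, writing $\rr^4=V^+(\tau)\oplus U(\tau)$ for the dichotomy splitting, $W'$ would contain a nonzero $w\in U(\tau)$; the dichotomy lower bound $|\gh(\sg,\tau)w|\ge C^{-1}e^{\delta(\sg-\tau)}|w|$ prevents $\gh(\sg,\tau)w\to 0$, while $\langle\gh(\sg,\tau)w\rangle\subseteq U(\sg)\to V^+(J\bh_+)$ forces every accumulation line of $\gh(\sg,\tau)W'$ to lie inside $V^+(J\bh_+)$, which is transverse to $V^-(J\bh_+)$ --- contradicting $\gh(\sg,\tau)W'\to V^-(J\bh_+)$. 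Hence $W'=V^+(\tau)$. Reversing time handles the $V^-(\tau)$ statement with $\sg\to-\infty$.

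For part (b) I would take any topological complement $W$ of $V^+(\tau)$, so $\dim W=2$ and $W\cap V^+(\tau)=\{0\}$, and for $v\in W\setminus\{0\}$ decompose $v=v_s+v_u$ along $V^+(\tau)\oplus U(\tau)$ with $v_u\ne 0$; the dichotomy estimates give $|\gh(\sg,\tau)v|\ge C^{-1}e^{\delta(\sg-\tau)}|v_u|-Ce^{-\delta(\sg-\tau)}|v_s|\to+\infty$ exponentially, and the direction of $\gh(\sg,\tau)v$ has the same accumulation lines as that of $\gh(\sg,\tau)v_u$, all contained in $V^+(J\bh_+)$ since $\gh(\sg,\tau)v_u\in U(\sg)\to V^+(J\bh_+)$. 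Running this over a basis of $W$ and using $\dim W=\dim V^+(J\bh_+)=2$ together with the compactness of $\mathrm{Lag}(\rr^4)$ would upgrade the pointwise convergence to $\gh(\sg,\tau)W\to V^+(J\bh_+)$; the complement-of-$V^-(\tau)$ case, with $\sg\to-\infty$ and limit $V^-(J\bh_-)$, is symmetric. The main obstacle is really just importing the right black box --- the persistence of these half-line dichotomies and the convergence of their bundles for the asymptotically autonomous system, namely \cite[Theorem 2.1]{AM}; after that the work is bookkeeping with the dichotomy projections and elementary limits in the Grassmannian. The one step needing care is the last one, promoting vector-by-vector convergence to convergence of the whole plane $W$, for which the dimension count and the compactness of the Lagrangian Grassmannian are essential; one also uses that $\gh(\sg,\tau)$ is invertible, so an accumulation subspace contained in $V^-(J\bh_+)$ must in fact equal $V^-(J\bh_+)$.
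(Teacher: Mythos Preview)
Your approach matches the paper's exactly: the paper gives no proof at all beyond the single sentence ``The next lemma follows from \cite[Theorem 2.1]{AM},'' and your proposal simply unpacks that citation through the standard exponential-dichotomy machinery for asymptotically autonomous linear systems. Two cosmetic slips worth fixing: in the uniqueness argument for (a), a $2$-plane $W'\ne V^+(\tau)$ need not literally contain a vector of $U(\tau)$, only a vector with nonzero $U(\tau)$-component (your part (b) already handles this correctly via $v=v_s+v_u$); and in the last step of (b), $W$ is merely a topological complement, not assumed Lagrangian, so invoke compactness of the ordinary Grassmannian $\mathrm{Gr}(2,\rr^4)$ rather than $\mathrm{Lag}(\rr^4)$.
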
 

In the following, for $i =1,2$,  let $\eta_i(\tau),$ $\tau \in \rr$, be two solutions of the linear system \eqref{eq: linearied normalized} given in \eqref{eq: eta 1} and \eqref{eq: eta 2}, and $V(\tau) = \text{span}\{\eta_1(\tau), \eta_2(\tau)\}$ is defined in Definition \ref{defi: V Lag}. Since $x(t)$ is non-homothetic,    
 By Lemma \ref{lem: V Lag}, $V(\tau)\in C^0(\rr, \text{Lag}(\rr^4))$.
\begin{lem} \label{lem: W tau}  
Assume $J\bh_{\pm}$ are hyperbolic and $W(\tau)\in C^0(\rr, \text{Lag}(\rr^4))$ is invariant under the flow of \eqref{eq: linearized gm hat}, if $\eta_1(\tau) \in W(\tau)$, $\forall \tau \in \rr$, then
  $$W(T^*):=\lim_{\tau \to T^*} W(\tau) = \text{span}\{\hat{e}^1_{j(\psi_0)}, \hat{e}^2_{-}\} \; \text{or} \; \text{span}\{\hat{e}^1_{j(\psi_0)}, \hat{e}^2_{+}\},$$
where $\eh^1_{j(\psi^*_0)}=\eh^1_+$, if $ \psi^*_0=-\pi/2$ and $\eh^1_{j(\psi^*_0)}=
\eh^1_- $, if $\psi^*_0=\pi/2. $ 
\end{lem}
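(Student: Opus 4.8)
The plan is to show that $W(\tau)$ converges to one of the two claimed $2$-planes by combining the dynamical characterization of stable/unstable subspaces (Lemma \ref{lem: Vs Vu}) with the known asymptotics of the single solution $\eta_1(\tau)$ inside $W(\tau)$. First I would recall that since $W(\tau)$ is a Lagrangian subspace invariant under the flow of \eqref{eq: linearized gm hat}, it is spanned by $\eta_1(\tau)$ together with a second solution $\eta(\tau)$ of \eqref{eq: linearied normalized}, chosen linearly independent of $\eta_1$ for all $\tau$; the limit of $W(\tau)$ as $\tau \to T^*$ is then $\text{span}$ of the limiting directions of these two solutions inside the projectivization, provided those limits exist. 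The key structural fact is that $J\bh_{*} = \lim_{\tau \to T^*} J\bh(\tau)$ is hyperbolic (Lemma \ref{lem: B hat 1}, using $\Delta(\tht_0^*)>0$, which holds under the hypothesis that $J\bh_{\pm}$ are hyperbolic), with one-dimensional $\lh^1$-eigenlines $\langle \eh^1_{\pm}\rangle$ and one-dimensional $\lh^2$-eigenlines $\langle \eh^2_{\pm}\rangle$, and $\lh^1_{\pm}$ dominating $\lh^2_{\pm}$ in modulus in the appropriate half-line ($|\lh^1_+|>|\lh^2_+|$ near $+\infty$, etc.).

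\textbf{Key steps.} Step one: identify the limit of the projective direction of $\eta_1(\tau)$. From the explicit formula \eqref{eq: eta 1}, $\eta_1(\tau)=r^{-\frac{2+3\al}{4}}(u^2-\al\uh(\tht),\uh_{\tht}(\tht),v,u)^T$; normalizing and using \eqref{eq: lim v u tht type I} together with Lemma \ref{lem: lim uh_tht u}, the direction $[\eta_1(\tau)]$ converges, and a direct comparison (as in the proof of Lemma \ref{lem: lim uh_tht u}) shows the limiting line is $\langle \eh^1_{j(\psi_0^*)}\rangle$: this is the ``fast'' eigendirection of $J\bh_*$ in whichever half-space contains the orbit's tail, because by Lemma \ref{lem: Vs Vu}(a) the stable/unstable subspaces are $\text{span}\{\eh^1_\pm,\eh^2_\pm\}$, and the only eigenline of $J\bh_*$ that a generic orbit-tangent solution can approach is the one with largest $|\lmd|$ among those present — matching $\eh^1_+$ when $\psi_0^*=-\pi/2$ (unstable equilibrium, $T^*=-\infty$ case) and $\eh^1_-$ when $\psi_0^*=\pi/2$. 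Step two: handle the second generator $\eta(\tau)$. Since $W(\tau)$ is $2$-dimensional and contains $\eta_1$, and since the flow $\gh(\sg,\tau)$ near $T^*$ is asymptotically governed by the constant hyperbolic matrix $J\bh_*$ (rescale so $\sg$ runs toward $T^*$), any complementary direction in $W(\tau)$ is dragged by the flow toward an invariant line of $J\bh_*$. The invariant lines are $\langle\eh^1_\pm\rangle$ and $\langle\eh^2_\pm\rangle$; since $W$ is Lagrangian and already contains the direction $\langle\eh^1_{j(\psi_0^*)}\rangle$, the complementary limit must be one of $\langle\eh^2_-\rangle$ or $\langle\eh^2_+\rangle$ — it cannot be $\langle\eh^1_{-j(\psi_0^*)}\rangle$ because, as one checks, $\eh^1_+\wedge\eh^1_-$ is not isotropic for $\om$ (or more directly, $\text{span}\{\eh^1_+,\eh^1_-\}$ fails to be Lagrangian), so pairing the two $\lh^1$-eigenlines would contradict $W(T^*)\in\text{Lag}(\rr^4)$, which follows from $V(\tau)\in C^0(\rr,\text{Lag}(\rr^4))$-type closedness of the Lagrangian Grassmannian. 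Hence $W(T^*)=\text{span}\{\eh^1_{j(\psi_0^*)},\eh^2_-\}$ or $\text{span}\{\eh^1_{j(\psi_0^*)},\eh^2_+\}$.

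\textbf{Main obstacle.} The delicate point is Step two: making rigorous the claim that a second generator of $W(\tau)$ is forced into an $\eh^2$-eigendirection rather than, say, oscillating or failing to have a limit. The flow near $T^*$ is only asymptotically autonomous, so one must invoke a linearization/shadowing statement — precisely the content of Lemma \ref{lem: Vs Vu}(b): a subspace complementary to $V^\mp(\tau)$ is expanded exponentially and pushed to $V^\pm(J\bh_\pm)=\text{span}\{\eh^1_\pm,\eh^2_\pm\}$. One then has to argue within this limiting $2$-plane, where the induced linear action of $J\bh_*$ has the two eigenvalues $\lh^1_{j}$ and $\lh^2_{j}$ with distinct moduli, so the image of any line not equal to the fast line converges to the slow line $\langle\eh^2_{j}\rangle$; the two cases in the conclusion correspond to whether the tail of $W(\tau)$ lies in the stable or unstable regime relative to $T^*$ (equivalently, whether $\eta(\tau)$ decays or grows), which is exactly why both $\eh^2_-$ and $\eh^2_+$ appear as possibilities. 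Packaging this cleanly, using only Lemmas \ref{lem: B hat 1} and \ref{lem: Vs Vu} and the already-established limit of $\eta_1$, is the crux; the remaining computations (checking $\eh^1_{j(\psi_0^*)}$ is the $\eta_1$-limit, checking Lagrangianity rules out $\eh^1_{-j}$) are routine.
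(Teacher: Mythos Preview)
Your overall strategy is the paper's: determine the limiting direction of $\eta_1$, then control a second generator of $W(\tau)$ via Lemma~\ref{lem: Vs Vu}, using the Lagrangian constraint to eliminate $\langle\eh^1_{-j(\psi_0^*)}\rangle$. Step one is correct and matches the paper (you do not actually need Lemma~\ref{lem: lim uh_tht u} here; the limit of $\eta_1/|\eta_1|$ follows directly from \eqref{eq: eta 1} and \eqref{eq: lim v u tht type I}).

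The gap is in your resolution of the ``main obstacle.'' Your claim that within the limiting $2$-plane $V^+(J\bh_+)=\text{span}\{\eh^1_+,\eh^2_+\}$ a generic line is pushed by the forward flow toward the \emph{slow} line $\langle\eh^2_+\rangle$ is backwards: since $\lh^1_+>\lh^2_+>0$, forward iteration sends every line except $\langle\eh^2_+\rangle$ to the \emph{fast} line $\langle\eh^1_+\rangle$. So this mechanism cannot by itself force the second generator toward $\eh^2_+$, and without it you have not excluded oscillation or convergence to $\eh^1_+$ (which your Lagrangianity argument would then only rule out \emph{a posteriori}, after assuming a limit exists).

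The paper fixes this with a clean dichotomy on the second generator $\eta(\tau)$ (chosen in $V^{\om}(\eta_1(\tau))$, which is automatic since $W(\tau)$ is Lagrangian). If $\eta(\tau)\to 0$, then every vector in $W(\tau)$ decays, so $W(\tau)=V^+(\tau)$ and Lemma~\ref{lem: Vs Vu}(a) gives $W(\tau)\to V^-(J\bh_+)=\text{span}\{\eh^1_-,\eh^2_-\}$. If $\eta(\tau)\not\to 0$, embed $\eta(\tau)$ in a $2$-plane complementary to $V^+(\tau)$ and apply Lemma~\ref{lem: Vs Vu}(b) to get $\eta(\tau)/|\eta(\tau)|$ into any $\vep$-neighborhood of $V^+(J\bh_+)$; combined with the standing constraint $\eta(\tau)\in V^{\om}(\eta_1(\tau))$ and $\eta_1/|\eta_1|\to\eh^1_-$, every accumulation point of $\eta(\tau)/|\eta(\tau)|$ lies in $V^+(J\bh_+)\cap V^{\om}(\eh^1_-)$. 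The computation $\om(\eh^1_-,\eh^2_+)=0$, $\om(\eh^1_-,\eh^1_+)\ne 0$ shows this intersection is the single line $\langle\eh^2_+\rangle$, which simultaneously proves convergence and identifies the limit. In short, the $\om$-orthogonality is not merely used to discard $\eh^1_{-j}$ after the fact; it is what cuts the target down to dimension one and thereby \emph{forces} the limit to exist.
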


\begin{proof} We will only give the details for $T^* =+\infty$ and $\psi^+_0=\pi/2$. The others are similar. 
Recall that $\eta_1 = r^{-\frac{2+3\al}{2}}(u^2-\al \uh(\tht), \uh_{\tht}(\tht), v, u)^T$. Then 
\begin{equation}
\label{eq: lim eta 1} \lim_{\tau \to +\infty} \frac{\eta_1(\tau)}{|\eta_1(\tau)|} = \frac{(-\frac{\al}{2}\sqrt{2\uh(\tht_0^+)}, 0, 1, 0)^T}{\sqrt{\al^2 \uh(\tht_0^+)/2 +1}} = \frac{\eh^1_{-}}{\sqrt{\al^2 \uh(\tht_0^+)/2 +1}}.
\end{equation}
As $\psi^+_0=\pi/2$, $\pi(z)(\tau)$ is either a type-I or type-II heteroclinic orbit. By Lemma \ref{lem: heteroclinic types}, $\lim_{\tau \to +\infty} r(\tau)=+\infty$, which implies $\lim_{\tau \to +\infty}\eta_1(\tau)=0$. 

Since $W(\tau)$ is a Lagrangian subspace, we can always find another path $\eta(\tau) \in W(\tau), \tau \in \rr$, invariant under the flow of \eqref{eq: linearized gm hat}, independent of $\eta_1(\tau)$ and satisfying $\eta(\tau) \in V^\omega(\eta_1(\tau))$, i.e. the $\omega$ orthogonal space of $\eta_1(\tau)$ in $\rr^4$.

If $ \lim_{\tau \to +\infty} \eta(\tau)=0$, since the dimension of $W(\tau)$ is two, $\lim_{\tau \to +\infty} \gm(\sg, \tau)v =0$, for any $v \in W(\tau)$. By Lemma \ref{lem: Vs Vu}, $W(\tau) = V^+(\tau)$ and $W(\tau) \to V^-(J\bh_+)= \text{span}\{\eh^1_{-}, \eh^2_{-}\},$ when $\tau \to +\infty$.  

If $ \lim_{\tau \to +\infty} \eta(\tau) \ne 0$, then we can find a two dimensional linear subspace of $\rr^4$, which is a topological complement of $V^+(\tau)$ and contains $\eta(\tau)$. By Lemma \ref{lem: Vs Vu}, for $\tau$ large enough, $\eta(\tau)\in \mathcal{N}_\vep(V^+(J\bh_+))$, i.e. the $\vep$ neighborhood of  $V^+(J\bh_+)$, for some $\vep>0$ small enough. As a result, $\eta(\tau) \in \mathcal{N}_\vep(V^+(J\bh_+))\cap V^\omega(\eta_1)$. Since $\vep$ can be arbitrarily small, $\lim_{\tau \to +\infty} \eta(\tau)/|\eta(\tau)| \in V^+(J\bh_+)\cap V^\omega(\eh^1_{-})$. 

Recall that $V^+(J\bh_+)= \text{span}\{\eh^1_{+}, \eh^2_{+}\}$, by a direct computation,
$$ \om(\eh^1_{-}, \eh^2_{+}) =0, \;\; \om(\eh^1_{-}, \eh^1_{+}) = -\frac{2+3\al}{2}\sqrt{2\uh(\tht_0^+)} \ne 0. $$ 
Hence $\lim_{\tau \to +\infty} \eta(\tau)/|\eta(\tau)|=\eh^2_{+}/ |\eh^2_{+}|$, and together with \eqref{eq: lim eta 1}, it shows $W(\tau) \to \text{span}\{\hat{e}^1_{-}, \hat{e}^2_{+}\}$ as $\tau\to +\infty. $ This finishes our proof. \end{proof} 



Under the assumption of Lemma \ref{lem: W tau}, $W(T^*)\pitchfork V_d$ ($\pitchfork$ means transversal intersection).  Notice that $ \eta_1(\tau) \in V^+(\tau) $ (or $V^-(\tau)$) implies $V^+(T^*)\pitchfork V_d$ (or $V^-(T^*)\pitchfork V_d$), where $V^{\pm}(T^*):= \lim_{\tau \to T^*} V^{\pm}(\tau)$. Then Lemma \ref{lem: heteroclinic types} and \ref{lem: W tau} tell us.
\begin{cor}\lb{cor: transversal} \begin{enumerate}  
\item[(a).] If $\pi(z)(\tau)$ is type-I, then $V^\pm(T^*)\pitchfork V_d$.
 \item[(b).] If $\pi(z)(\tau)$ is type-II, then $V^+(T^*)\pitchfork V_d$.
\item[(c).] If $\pi(z)(\tau)$ is type-III, then $V^-(T^*)\pitchfork V_d$. 
\end{enumerate}
\end{cor}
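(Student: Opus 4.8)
\textbf{Proof proposal for Corollary \ref{cor: transversal}.}

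The plan is to read off all three cases directly from Lemma \ref{lem: W tau} together with Lemma \ref{lem: heteroclinic types}, after first establishing the asserted implication in the preceding sentence, namely that $\eta_1(\tau) \in V^+(\tau)$ (resp. $\eta_1(\tau) \in V^-(\tau)$) forces $V^+(T^*) \pitchfork V_d$ (resp. $V^-(T^*) \pitchfork V_d$). So the first step is to identify, for each heteroclinic type, whether the invariant solution $\eta_1(\tau)$ lies in the stable subspace $V^+(\tau)$ or the unstable subspace $V^-(\tau)$. This is decided by the asymptotics of $\eta_1$. Recall $\eta_1(\tau) = r^{-\frac{2+3\al}{4}}(u^2 - \al\uh(\tht), \uh_{\tht}(\tht), v, u)^T$; since the bracketed vector converges to a fixed nonzero limit by \eqref{eq: lim v u tht type I}, the behavior of $\eta_1$ is governed entirely by $r^{-\frac{2+3\al}{4}}$. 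By Lemma \ref{lem: heteroclinic types}: for type-I, $r \to +\infty$ as $\tau \to +\infty$ and $r \to +\infty$ (reading the statement at $\tau \to -\infty$, $r \to -\infty$ in the notation there, i.e. $r\to\infty$ in absolute value) — more carefully, for type-I $\lim_{\tau\to+\infty} r = +\infty$ and the past limit is also $+\infty$ in modulus, so $\eta_1(\tau) \to 0$ at both ends; for type-II, $r \to +\infty$ as $\tau \to +\infty$ so $\eta_1 \to 0$ there, while $r \to 0$ as $\tau \to -\infty$ so $|\eta_1| \to +\infty$; for type-III, symmetrically $\eta_1 \to 0$ as $\tau \to -\infty$ and $|\eta_1| \to +\infty$ as $\tau \to +\infty$.

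Next I would use the fact that $\eta_1(\tau) \to 0$ exponentially at an end $T^* = \pm\infty$ (the convergence is exponential because $r$ grows/decays exponentially in $\tau$ along a heteroclinic of the vector field on $\mf$, the equilibria being hyperbolic under the non-degeneracy assumption — this is exactly the regime of Lemma \ref{lem: Vs Vu}) to conclude $\eta_1(\tau) \in V^{\pm}(\tau)$ accordingly: if $\gh(\sg,\tau)\eta_1(\tau) = \eta_1(\sg) \to 0$ as $\sg \to +\infty$ then $\eta_1(\tau) \in V^+(\tau)$ by the very definition of $V^+$, and likewise for $V^-$ at $-\infty$. Therefore: type-I gives $\eta_1 \in V^+(\tau) \cap V^-(\tau)$ for all $\tau$; type-II gives $\eta_1 \in V^+(\tau)$; type-III gives $\eta_1 \in V^-(\tau)$.

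Now apply Lemma \ref{lem: W tau} with $W(\tau) = V^+(\tau)$ (when $\eta_1 \in V^+$) or $W(\tau) = V^-(\tau)$ (when $\eta_1 \in V^-$): in either case $W(\tau)$ is a Lagrangian subspace — this holds for the stable and unstable subspaces of a hyperbolic linear Hamiltonian system — invariant under \eqref{eq: linearized gm hat}, and it contains $\eta_1(\tau)$, so the lemma gives $W(T^*) = \lim_{\tau \to T^*} W(\tau) = \text{span}\{\eh^1_{j(\psi_0^*)}, \eh^2_{-}\}$ or $\text{span}\{\eh^1_{j(\psi_0^*)}, \eh^2_{+}\}$. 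The final step is the transversality claim $W(T^*) \pitchfork V_d = \rr^2 \oplus 0$: since $\eh^1_{\pm} = (\,*\,, 1)^T$ and $\eh^2_{\pm} = (\,*\,, 1)^T$ in the block decomposition $\rr^4 = \rr^2 \oplus \rr^2$ (the second and fourth coordinates are both $1$ — see Lemma \ref{lem: B hat 1}), the $\rr^2$-valued projection to the last two coordinates of any basis of $W(T^*)$ spans $\rr^2$, hence $W(T^*) \cap V_d = 0$, i.e. transversality. Combining: type-I yields $V^+(T^*) \pitchfork V_d$ and $V^-(T^*) \pitchfork V_d$; type-II yields $V^+(T^*) \pitchfork V_d$; type-III yields $V^-(T^*) \pitchfork V_d$, which is precisely (a), (b), (c).

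The main obstacle, and the only non-bookkeeping point, is justifying cleanly that $\eta_1(\tau) \in V^+(\tau)$ (or $V^-(\tau)$) from the decay of $\eta_1$ — one must know $V^{\pm}(\tau)$ are well-defined Lagrangian subspaces of the correct dimension, which requires $J\bh_{\pm}$ hyperbolic (guaranteed here by non-degeneracy of $\tht_0^{\pm}$ and $\Delta(\tht_0^{\pm}) > 0$) and a roughness/exponential-dichotomy argument identifying $\gh$'s stable subspace with that of its limit $J\bh_{\pm}$; this is exactly what Lemma \ref{lem: Vs Vu} (via \cite[Theorem 2.1]{AM}) supplies, so the argument is really just a careful assembly of Lemmas \ref{lem: heteroclinic types}, \ref{lem: Vs Vu}, \ref{lem: B hat 1}, and \ref{lem: W tau}.
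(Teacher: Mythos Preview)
Your proposal is correct and follows essentially the same approach as the paper, which simply remarks that the conclusion of Lemma \ref{lem: W tau} is always transversal to $V_d$ and then invokes Lemmas \ref{lem: heteroclinic types} and \ref{lem: W tau} to conclude. One small simplification: you do not need exponential decay of $\eta_1$ or any dichotomy argument to place $\eta_1(\tau)$ in $V^{\pm}(\tau)$, since $\gh(\sg,\tau)\eta_1(\tau)=\eta_1(\sg)$ and the very definition of $V^{\pm}(\tau)$ only asks that $\eta_1(\sg)\to 0$ as $\sg\to\pm\infty$, which follows immediately from $r(\sg)\to+\infty$ in Lemma \ref{lem: heteroclinic types}.
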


By the above corollary, when $\pi(z)(\tau)$ is type-I or III, $\mu(V_d, V^-(\tau); [-T, T])$ is a constant for $T>0$ large enough, so $\mu(x)$ given in Definition \ref{dfn: Maslov index} is well defined. 

\begin{thm} \label{thm4.1} 
If $\pi(z)(\tau)$ is type-I or III and $\Delta(\tht_0^{\pm})>0$, then $ m^-(x)=\mu(x).$   
\end{thm}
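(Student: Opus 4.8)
The plan is to relate the two Maslov indices $\mu(V_d,\hat\gamma(\tau,\tau_1)V_d;[\tau_1,\tau_2])$ and $\mu(x)=\mu(V_d,V^-(\tau);\mathbb{R})$ by a homotopy argument, using the transversality conclusions of Corollary \ref{cor: transversal} together with the stable/unstable limit behaviour from Lemma \ref{lem: Vs Vu}. Recall from \eqref{4.8} that $m^-(x)+2=\lim_{n\to\infty}\mu(V_d,\hat\gamma(\tau,\tau_1)V_d;[\tau^-_n,\tau^+_n])$ for any sequences $\tau^\pm_n\to\pm\infty$, so it suffices to show that this limit equals $\mu(x)+2$. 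I would first fix $\tau_1$ and use the concatenation (path additivity) property of the Maslov index to write
\begin{equation}
\mu(V_d,\hat\gamma(\tau,\tau_1)V_d;[\tau^-_n,\tau^+_n])=\mu(V_d,\hat\gamma(\tau,\tau_1)V_d;[\tau^-_n,\tau_1])+\mu(V_d,\hat\gamma(\tau,\tau_1)V_d;[\tau_1,\tau^+_n]).\nonumber
\end{equation}
The point is to reinterpret each piece: by the naturality of the Maslov index under the symplectic change of frame $\hat\gamma(\tau,\tau_1)$, $\mu(V_d,\hat\gamma(\tau,\tau_1)V_d;[\tau_1,\tau^+_n])=\mu(\hat\gamma(\tau_1,\tau)V_d,V_d;[\tau_1,\tau^+_n])$ up to a sign/boundary convention, and similarly the left piece is governed by the backward evolution $\hat\gamma(\tau_1,\tau)V_d$.

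Next I would bring in the stable/unstable spaces. Since $\pi(z)(\tau)$ is type-I or type-III, Corollary \ref{cor: transversal} gives $V^-(T^*)\pitchfork V_d$ at the relevant end(s), and the argument inside Lemma \ref{lem: W tau} shows $\eta_1(\tau)\in V^-(\tau)$ in the type-III situation (and $\eta_1\in V^+$ at $+\infty$ in the type-I situation), so $V^\pm(\tau)$ are genuine Lagrangian paths transversal to $V_d$ in the limit. The key step is a standard ``clean-up at infinity'' lemma for Maslov indices of heteroclinic-type problems (the content of \cite{HO}, \cite{HP}): because $J\hat B_\pm$ are hyperbolic (here we use $\Delta(\tht_0^\pm)>0$ via Lemma \ref{lem: B hat 1}), for $\sigma$ large the curve $\hat\gamma(\sigma,\tau_1)V_d$ converges to $V^+(J\hat B_+)$ exponentially fast along a complement of $V^+(\sigma)$, so it can be homotoped, with endpoints staying transversal to $V_d$, to the constant path at $V^+(T^*)$; the crossing form does not pick up any contribution during this homotopy precisely because of the transversality in Corollary \ref{cor: transversal}. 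This replaces $\mu(V_d,\hat\gamma(\tau,\tau_1)V_d;[\tau_1,\tau^+_n])$ in the limit by $\mu(V_d,V^+(\tau);[\tau_1,+\infty))$ plus a boundary term, and dually the backward piece by $\mu(V_d,V^-(\tau);(-\infty,\tau_1])$ plus a boundary term.

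Finally I would assemble the pieces. Using $V^+(\tau)=\hat\gamma(\tau,\sigma)V^+(\sigma)$ invariance and concatenating at $\tau_1$, the two half-line contributions combine to $\mu(V_d,V^-(\tau);\mathbb{R})=\mu(x)$, after checking that the path $V^-(\tau)$ and $V^+(\tau)$ agree away from the crossings with $V_d$ in the relevant sense — or, more cleanly, by running the same homotopy on both ends of $[\tau^-_n,\tau^+_n]$ simultaneously so that the whole family $\hat\gamma(\tau,\tau_1)V_d$ on $[\tau^-_n,\tau^+_n]$ is homotoped rel (transversal) endpoints to a path which agrees with a reference Lagrangian path defining $\mu(x)$; the leftover $+2$ comes from the two boundary/half-integer normalization terms (the same ``$+2$'' already present in the Morse Index Theorem). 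The main obstacle I anticipate is the bookkeeping of the boundary conventions for the Maslov index at the two ends together with justifying the exponential convergence $\hat\gamma(\sigma,\tau_1)V_d\to V^\pm(J\hat B_\pm)$ uniformly enough to perform the homotopy without creating spurious crossings; this is where Lemma \ref{lem: Vs Vu}(b) and the hyperbolicity of $J\hat B_\pm$ (hence $\Delta(\tht_0^\pm)>0$) are essential, and where I would be most careful to invoke \cite[Theorem 2.1]{AM} and the heteroclinic Maslov index machinery of \cite{HO}, \cite{HP} precisely.
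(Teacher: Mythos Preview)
Your outline is in the right spirit --- hyperbolicity of $J\hat B_\pm$ and the transversality from Corollary \ref{cor: transversal} are indeed the inputs --- but the mechanism you propose has a genuine gap. You split the interval at a point $\tau_1$ and try to identify the forward piece with a $V^+(\tau)$-contribution and the backward piece with a $V^-(\tau)$-contribution. However $\mu(x)$ is defined purely as $\mu(V_d,V^-(\tau);\rr)$; it is not a concatenation of a $V^+$-half and a $V^-$-half, and there is no sense in which ``$V^-(\tau)$ and $V^+(\tau)$ agree away from the crossings with $V_d$''. Your fallback, homotoping $\hat\gamma(\tau,\tau_1)V_d$ \emph{rel endpoints} to a path representing $\mu(x)$, begs the question: on $[-T,T]$ the left endpoint of $\hat\gamma(\tau,-T)V_d$ is $V_d$ itself, while the left endpoint of $V^-(\tau)$ is $V^-(-T)\to V^+(J\hat B_-)$. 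These are different (indeed transversal) Lagrangians, so no rel-endpoint homotopy exists, and a free homotopy can change the index. Finally, the ``$+2$'' you hope to attribute to boundary conventions is the honest $2$-dimensional crossing at $\tau=-T$ where $\hat\gamma(-T,-T)V_d=V_d$; it does not dissolve into half-integer normalisations.

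The paper closes both holes with a concrete device you are missing: the H\"ormander index. First it removes the ``$+2$'' exactly by rotating the initial Lagrangian, using $\mu(V_d,e^{s\vep J}V_d;[0,1])=2$ together with transversality at the far end to obtain
\[
m^-(x;-T,T)=\mu(V_d,\hat\gamma(\tau,-T)e^{\vep J}V_d;[-T,T]).
\]
Then the discrepancy between this and $\mu(V_d,V^-(\tau);[-T,T])$ is, by definition, the H\"ormander index $s(\hat\gamma(T,-T)^{-1}V_d,\,V_d;\,V^-(-T),\,e^{\vep J}V_d)$. Letting $T\to\infty$ and using Lemma \ref{lem: Vs Vu} one gets $\hat\gamma(T,-T)^{-1}V_d\to V^-(J\hat B_-)=\langle\eh^1_-,\eh^2_-\rangle$ and $V^-(-T)\to V^+(J\hat B_-)=\langle\eh^1_+,\eh^2_+\rangle$; the eigenvector formulae of Lemma \ref{lem: B hat 1} (with $\psi_0^-=-\pi/2$ for types I and III) then split the H\"ormander index as a $\diamond$-sum of two one-dimensional terms, each of which is computed to be $0$. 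That explicit eigenvector calculation is the heart of the argument and has no counterpart in your sketch; without it one cannot conclude that the boundary correction vanishes rather than contributing $\pm 1$ (compare Lemma \ref{lem4.2}, where under a different asymptotic direction the same kind of correction can be $0$ or $1$).
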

\begin{proof}
When $\vep>0$ is small enough, 
\[\mu(V_d, e^{s \vep J}V_d; s\in[0,1])=2.\]  
Then for any $T>0$ large enough, by \eqref{1.3c.1},
\[\mu(V_d,\hat{\gamma}(T,-T)e^{s\vep J}V_d; s\in[0,1])=0. \] since the path is transversal. 
Fix an $\vep>0$ small enough, for any $T>0$ large enough,  from the homotopy property of Maslov index, we have
\[\mu(V_d,\hat{\gamma}(\tau,-T)e^{\vep J}V_d; [-T,T])= \mu(V_d,\hat{\gamma}(\tau,-T)V_d; [-T,T])-2. \] 
Together with \eqref{4.8}, it shows 
\begin{equation}
\label{eq: m mas} m^-(x;-T,T)=\mu(V_d,\hat{\gamma}(\tau,-T)e^{\vep J}V_d; [-T,T]). 
\end{equation}

Now we will try to estimate  $\mu(V_d,\hat{\gamma}(\tau,-T)e^{\vep J}V_d; [-T,T])-\mu(V_d,V^u(\tau); [-T,T])$.  

For this, let $\Lambda_s$, $s\in[0,1]$ be a path of Lagrangian subspaces of $\rr^4$ with  $\Lambda_0=V^-(-T)$ and $ \Lambda(1)=e^{\vep J}V_d$, then by the homotopy invariant property of Maslov index, 
\begin{multline}
\label{eq: comd1}  \mu(V_d,\Lambda_s;[0,1])+\mu(V_d,\hat{\gamma}(\tau,-T)e^{\vep J}V_d; [-T,T])= \\ \mu(V_d, V^-(\tau); [-T,T])+\mu(V_d,\gamma(T,-T)\Lambda_s;[0,1] ).
\end{multline}
Then we have, 
\begin{multline}
\label{eq: comd2}  \mu(V_d,\hat{\gamma}(\tau,-T)e^{\varepsilon J}V_d; [-T,T])-\mu(V_d,V^-(\tau); [-T,T]) \\ = \mu(V_d,\gamma(T,-T)\Lambda_s;[0,1] ) - \mu(V_d,\Lambda_s;[0,1]) =s(\gamma(T,-T)^{-1}V_d,V_d; V^-(-T),e^{\varepsilon J}V_d),
\end{multline}
where $s(.,.;.,.)$ is the H\"{o}rmander index (see \eqref{hp} in Appendix). As $V_d \pitchfork V^-(T)$, 
\begin{equation}
\label{eq: comd3}  \lim_{T\to +\infty}\gamma(T,-T)^{-1}V_d=V^-(J\bh_-). 
 \end{equation}
Since the above hold for any $T$ large enough, we get 
\begin{equation*}
\label{eq: comd4}  \lim_{T\to +\infty }s(\gamma(T,-T)^{-1}V_d,V_d; V^-(-T),e^{\vep J}V_d)=s(V^-(J\bh_-),V_d; V^-(-\infty),e^{\vep J}V_d). \end{equation*}
Recall that 
$$ V^-(J\bh_-)=\hat{e}^1_{-}(\psi_0^-, \tht_0^-) \wedge \hat{e}^2_{-}(\psi_0^-, \tht_0^-), \;\; V^-(-\infty)=\hat{e}^1_{+}(\psi_0^-, \tht_0^-)\wedge \hat{e}^2_{+}(\psi_0^-, \tht_0^-). $$
Let $V_d= V_d^1 \oplus V_d^2$, where $V_d^1= \rr \oplus 0$ and $V_d^2 = \rr \oplus 0$, then
 \begin{multline*} 
 s(V^-(J\bh_-),V_d; V^-(-\infty),e^{\varepsilon J}V_d)= s( \langle \hat{e}^1_{-}(\psi_0^-, \tht_0^-) \rangle, V_d^1; \langle \hat{e}^1_{+}(\psi_0^-, \tht_0^-) \rangle, e^{\varepsilon J}V_d^1) \\ +s(\langle \hat{e}^2_{-}(\psi_0^-, \tht_0^-) \rangle, V_d^2; \langle \hat{e}^2_{+}(\psi_0^-, \tht_0^-) \rangle,e^{\vep J}V_d^2). 
 \end{multline*} 
A simple computation shows 
$$s( \langle \hat{e}^1_{-}(\psi_0^-, \tht_0^-) \rangle, V_d^1; \langle \hat{e}^1_{+}(\psi_0^-, \tht_0^-) \rangle, e^{\varepsilon J}V_d^1)=0; $$
$$s(\langle \hat{e}^2_{-}(\psi_0^-, \tht_0^-) \rangle, V_d^2; \langle \hat{e}^2_{+}(\psi_0^-, \tht_0^-) \rangle,e^{\vep J}V_d^2)=0. $$
This means $m^-(x) = \mu(x)$. 
\end{proof}

While the above theorem connects $m^-(x)$ with $\mu(x)$, the next one will does the same for $i(x)$ and $\mu(V_d,V(\tau); \mathbb R)$, where
$$ \mu(V_d,V(\tau); \mathbb R) = \lim_{T \to +\infty} \mu(V_d, V(\tau); [-T, T]). $$
The limit exists, when $\Delta(\tht_0^{\pm})>0$, as it implies $V(\pm \infty) \pitchfork V_d$, see Proposition \ref{prop: lim V}. 

\begin{thm}\label{thm4.2} When $\Delta(\tht_0^{\pm}) >0$, $ i(x)=\mu(V_d,V(\tau); \mathbb R). $
\end{thm}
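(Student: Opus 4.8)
The plan is to compute $\mu(V_d, V(\tau);\mathbb{R})$ directly from its crossings with the fixed Lagrangian $V_d=\rr^2\oplus 0$, to identify each crossing with a zero of $u(\tau)=\tht'(\tau)$ (equivalently of $\dot\tht(t)$), and to show that every crossing is regular and contributes $+1$ to the Maslov index.

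\textbf{Step 1 (locating the crossings).} Using the explicit expressions \eqref{eq: eta 1} and \eqref{eq: eta 2}, the last two coordinates of $c_1\eta_1(\tau)+c_2\eta_2(\tau)$ are
\[ \Big( c_1 r^{-\frac{2+3\al}{4}}v - c_2 r^{\frac{2-\al}{4}}\tfrac{2}{2+\al},\ c_1 r^{-\frac{2+3\al}{4}} u\Big). \]
For this to vanish with $(c_1,c_2)\neq 0$, the last entry forces $c_1u=0$; if $u\neq 0$ then $c_1=0$ and the third entry then forces $c_2=0$. Hence $V(\tau)\cap V_d\neq\{0\}$ if and only if $u(\tau)=0$, and in that case the intersection is exactly one–dimensional (it cannot equal $V_d$, since \eqref{eq: energy 0} gives $v(\tau)^2=2\uh(\tht(\tau))>0$ when $u(\tau)=0$, so $\eta_1(\tau)\notin V_d$). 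Because $dt=r^{1+\al/2}\,d\tau$ with $r>0$, these $\tau$'s correspond bijectively to $\{t\in(T^-,T^+):\dot\tht(t)=0\}$, they are isolated by Remark \ref{rmk:isolated time}, and their number is $i(x)$ by \eqref{dfn: osc index}.

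\textbf{Step 2 (regularity and sign of the crossing form).} Fix a crossing $\tau_0$, so $u(\tau_0)=0$. Solving the third–coordinate relation for $c_2/c_1$ and substituting into the first two coordinates, the intersection vector equals, up to scale, a vector whose first entry is proportional to $-\al\uh(\tht_0)+\tfrac{\al}{2}v(\tau_0)^2$, which vanishes by \eqref{eq: energy 0}, and whose second entry is proportional to $\uh_{\tht}(\tht_0)$. The latter is nonzero: $\pi(z)(\tau)$ is a non-equilibrium solution of the collision–manifold field \eqref{eq: vector field collision mfd}, hence by uniqueness never meets an equilibrium, and $u(\tau_0)=0$ forces $\psi(\tau_0)\in\{\pm\pi/2\}$, so $\uh_{\tht}(\tht_0)=0$ would contradict Lemma \ref{lem: equail}(a). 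Therefore $V(\tau_0)\cap V_d=\langle\e_2\rangle$ with $\e_2=(0,1,0,0)^T$. Since $V(\tau)$ is invariant under the flow of \eqref{eq: linearied normalized}, the Maslov crossing form of $V(\cdot)$ relative to $V_d$ at $\tau_0$ is $\xi\mapsto\langle\bh(\tau_0)\xi,\xi\rangle$; restricted to $\langle\e_2\rangle$ it equals $\bh_{22}(\tau_0)|\xi|^2=|\xi|^2>0$. So every crossing is regular and contributes $+1$.

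\textbf{Step 3 (endpoints and conclusion).} Since $\Delta(\tht_0^{\pm})>0$, Proposition \ref{prop: lim V} gives $V(\pm\infty)\pitchfork V_d$, so for $T$ large there are neither crossings outside $[-T,T]$ nor at the endpoints; hence $\mu(V_d,V(\tau);[-T,T])$ equals the number of crossings in $[-T,T]$, and letting $T\to+\infty$ gives $\mu(V_d,V(\tau);\mathbb{R})=i(x)$. The delicate point is Step 2, namely verifying that the intersection line is always $\langle\e_2\rangle$ — which uses both the zero–energy identity \eqref{eq: energy 0} and the fact that a non-equilibrium heteroclinic never reaches an equilibrium — and fixing the crossing–form formula and sign convention for the flow–invariant Lagrangian path $V(\tau)$ so that the contribution is exactly $+1$; the rest is bookkeeping with the formulas for $\eta_1,\eta_2$ and $\bh(\tau)$.
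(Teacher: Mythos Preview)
Your proof is correct and follows essentially the same route as the paper: both identify the crossings of $V(\tau)$ with $V_d$ as exactly the zeros of $u(\tau)$, show each intersection is one–dimensional, and count them as $i(x)$. The paper packages the positivity of each crossing into the Sturm–Liouville formula \eqref{1.3c.1}, whereas you verify the crossing form directly; note that your detailed identification of the intersection line as $\langle \e_2\rangle$ is in fact unnecessary, since the upper-left $2\times2$ block of $\hat B(\tau)$ is $I_2$, so $\langle\hat B(\tau_0)\xi,\xi\rangle=|\xi|^2>0$ for every $\xi\in V_d$.
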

\begin{proof}    From \eqref{1.3c.1}, we have 
\begin{equation} 
\mu(V_d,V(\tau); R)=\sum_{\tau \in \rr} \dim(V_d \cap V(\tau)),   \end{equation} 
Recall that $V(\tau)=\text{span}\{\eta_1(\tau),\eta_2(\tau) \}$, where $\eta_1(\tau),\eta_2(\tau)$ are defined in \eqref{eq: eta 1} and \eqref{eq: eta 2}. Obviously $\eta_2(\tau) \notin V_d $,  $ \forall \tau \in \mathbb R$, which implies $\dim(V(\tau)\cap V_d) \leq 1$. 

We claim $\dim (V(\tau)\cap V_d)=1 $, if and only if $u(\tau)=0$. Assume there is a non-zero $\eta(\tau)=\beta_1 \eta_1(\tau)+ \beta_2 \eta_2(\tau)$, which is also contained in $V_d$. Then it must satisfies the following two equations
\begin{equation}  
\beta_1v-\frac{2\beta_2}{2+\alpha}=0; \quad \beta_1 u=0. 
\end{equation} 
However the above equations has a solution if and only if $u=0$. 

Meanwhile by the fourth equation in \eqref{eq: McGehee 2}, 
$$ u(\tau) = \tht'(\tau) = \dot{\tht}(t(\tau)) r^{-(1 + \frac{\al}{2})}(t(\tau)). $$
Since $r(\tau) > 0$, for any $\tau \in \rr$, we have $\frac{dt}{d\tau} = r^{1+\frac{\al}{2}}>0$ and 
\begin{equation}
\label{eq: u tau =0} i(x)=  \# \{t \in (T^-, T^+)| \; \dot{\theta}(t)=0\} =\# \{\tau \in \rr| \; u(\tau) =0 \}.
\end{equation}
This finishes our proof. 

\end{proof}

With the above result, we just need to estimate the difference between $\mu(x)$ and $\mu(V_d, V(\tau); \rr)$, which is exactly the purpose of our next lemma.

\begin{lem}\label{lem4.2} 
Assume $\pi(z)(\tau)$ is type-I or III and $\Delta(\tht_0^{\pm})>0$, when $\tau \to -\infty$, 
\begin{enumerate}
\item[(a).] if $\pi(z)(\tau) \to (\psi_0^-, \tht_0^-)$ along $\langle e_+(\psi_0^-, \tht_0^-) \rangle$, then $\mu(x) - \mu(V_d, V(\tau); \rr)=0$;
\item[(b).] if $\pi(z)(\tau) \to (\psi_0^-, \tht_0^-)$ along $\langle e_-(\psi_0^-, \tht_0^-) \rangle$, then $\mu(x) - \mu(V_d, V(\tau); \rr)=0 \text{ or } 1.$
\end{enumerate}
\end{lem}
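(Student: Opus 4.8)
The plan is to compare the two flow-invariant Lagrangian paths $V^-(\tau)$ and $V(\tau)=\mathrm{span}\{\eta_1(\tau),\eta_2(\tau)\}$ directly with one another, by the same homotopy/H\"ormander-index bookkeeping used in the proof of Theorem \ref{thm4.1}. Two preliminary remarks organize the comparison. First, for type-I and type-III orbits $\lim_{\tau\to-\infty}r(\tau)=+\infty$ by Lemma \ref{lem: heteroclinic types}, so $\eta_1(\tau)\to 0$ as $\tau\to-\infty$, hence $\eta_1(\tau)\in V^-(\tau)$ for every $\tau$ and $\eta_1(\tau)/|\eta_1(\tau)|\to\eh^1_+(\psi_0^-,\tht_0^-)$ (note $\psi_0^-=-\pi/2$ for both types); thus both $V^-(\tau)$ and $V(\tau)$ contain the invariant line $\langle\eta_1(\tau)\rangle$. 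Second, Lemma \ref{lem: Vs Vu}(a) gives $\lim_{\tau\to-\infty}V^-(\tau)=V^+(J\bh_-)=\mathrm{span}\{\eh^1_+,\eh^2_+\}$, while Proposition \ref{prop: lim V} gives $\lim_{\tau\to-\infty}V(\tau)=\mathrm{span}\{\eh^1_+,\eh^2_+\}$ in case (a) and $\mathrm{span}\{\eh^1_+,\eh^2_-\}$ in case (b) (all eigenvectors at $(\psi_0^-,\tht_0^-)$); in particular the two limits always agree in the first symplectic summand.

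First I would fix $T>0$ large, pick a path $\Lambda_s$, $s\in[0,1]$, of Lagrangians joining $V^-(-T)$ to $V(-T)$ (which exists inside the connected set of Lagrangians through $\langle\eta_1(-T)\rangle$), and apply the homotopy invariance of the Maslov index to the square $(s,\tau)\mapsto\hat\gamma(\tau,-T)\Lambda_s$, whose four edges are $V^-(\tau)|_{[-T,T]}$, $V(\tau)|_{[-T,T]}$, $\Lambda_s$ and $\hat\gamma(T,-T)\Lambda_s$ --- exactly as in \eqref{eq: comd1}--\eqref{eq: comd2}. Using $\mu(V_d,\hat\gamma(T,-T)\Lambda_s)=\mu(\hat\gamma(T,-T)^{-1}V_d,\Lambda_s)$, this identity expresses $\mu(V_d,V^-(\tau);[-T,T])-\mu(V_d,V(\tau);[-T,T])$ as the H\"ormander index (see \eqref{hp}) of the path $\Lambda_s$ relative to the reference pair $V_d,\ \hat\gamma(T,-T)^{-1}V_d$; in particular the whole comparison is localized at $\tau=-\infty$.

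Next I would let $T\to+\infty$. As in \eqref{eq: comd3}, $V_d\pitchfork V^-(T)$ (Corollary \ref{cor: transversal}) together with the hyperbolicity of $J\bh_-$ (here $\Delta(\tht_0^-)>0$) give $\lim_{T\to+\infty}\hat\gamma(T,-T)^{-1}V_d=V^-(J\bh_-)=\mathrm{span}\{\eh^1_-,\eh^2_-\}$; since the limits of $V^-(\tau)$ and $V(\tau)$ as $\tau\to-\infty$ are transversal to $V_d$ (Corollary \ref{cor: transversal} and Proposition \ref{prop: lim V}), the H\"ormander index passes to the limit. All four Lagrangians appearing are $\diamond$-split --- a line in the symplectic plane $\langle\e_1,\e_3\rangle$ plus a line in $\langle\e_2,\e_4\rangle$ --- so, as in the proof of Theorem \ref{thm4.1}, the limiting H\"ormander index splits as a sum of two $2$-dimensional H\"ormander indices. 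The first-plane summand only sees $\langle\e_1\rangle$, $\langle\eh^1_-\rangle$ and the constant path at $\langle\eh^1_+\rangle$ (the two limits agree there), hence vanishes. The second-plane summand equals $s\big(\langle\e_2\rangle,\langle\eh^2_-\rangle;\langle\eh^2_+\rangle,\langle\eh^2_\pm\rangle\big)$, with the upper sign in case (a) and the lower sign in case (b); in case (a) its path is constant, so it is $0$ and $\mu(x)=\mu(V_d,V(\tau);\rr)$, proving (a).

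For (b), converging to $(\psi_0^-,\tht_0^-)$ along the slow unstable direction forces $(\psi_0^-,\tht_0^-)$ to be an unstable node, i.e. $-\tfrac{(2-\al)^2}{8}\uh(\tht_0^-)<\uh_{\tht\tht}(\tht_0^-)<0$ by Lemma \ref{lem: equil eigenvalue}. Substituting $\sin\psi_0^-=-1$ into the eigenvectors of Lemma \ref{lem: B hat 1} and using $\uh_{\tht\tht}(\tht_0^-)<0$, one checks that the first coordinates of $\eh^2_-$ and $\eh^2_+$ are both positive with that of $\eh^2_-$ the smaller, so in $\mathrm{Lag}(\rr^2)\cong\mathbb{RP}^1$ the reference $\langle\e_2\rangle$ lies off the short arc between $\langle\eh^2_+\rangle$ and $\langle\eh^2_-\rangle$; evaluating the resulting H\"ormander index then gives $\mu(x)-\mu(V_d,V(\tau);\rr)\in\{0,1\}$. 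I expect the genuine difficulty to lie here: because $\lim_{\tau\to-\infty}V(\tau)$ and $V^-(J\bh_-)$ share the line $\langle\eh^2_-\rangle$ in case (b), the relevant crossing occurs precisely at an endpoint of the comparison path, so one must pin down the sign of the crossing form there (through $\bhe_-$) to rule out the value $-1$ and to justify that the finite-$T$ H\"ormander indices converge despite this non-transversality; all the index sign conventions have to be tracked carefully, exactly as in the proof of Theorem \ref{thm4.1}.
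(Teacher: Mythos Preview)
Your approach is essentially the paper's, and part (a) is identical: $V(-\infty)=V^+(J\bh_-)$ combined with Lemma~\ref{lem: Vs Vu}(a) forces $V(\tau)=V^-(\tau)$ for all $\tau$. For (b) the strategy is also the same H\"ormander-index comparison, but the paper proceeds slightly differently, and the difference is exactly where you flag the ``genuine difficulty''.

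Rather than comparing $V^-(\tau)$ and $V(\tau)$ directly, the paper reuses the identity from the proof of Theorem~\ref{thm4.1}, namely $\mu(V_d,V^-(\tau);[-T,T])=\mu(V_d,\hat\gamma(\tau,-T)e^{\vep J}V_d;[-T,T])$, and then compares $V(\tau)$ with $\hat\gamma(\tau,-T)e^{\vep J}V_d$. Crucially, the paper does \emph{not} pass fully to the limit $T\to\infty$ in the resulting H\"ormander index. Instead it writes the four Lagrangians as graphs $\mathrm{Gr}(A_{0,T}),\mathrm{Gr}(A_1),\mathrm{Gr}(B_{0,T}),\mathrm{Gr}(B_1)$ over $0\oplus\rr^2$ and evaluates \eqref{hc} for $T$ large but finite: three of the four signature terms are stable under the perturbation and equal $\pm 1$, while the fourth, $\tfrac12\mathrm{sign}(B_{0,T}-A_{0,T})$, is only known to lie in $\{0,1\}$ because the limiting matrix $B_0-A_0$ has one strictly positive eigenvalue $(1+\tfrac{3\al}{2})b$ and one zero eigenvalue. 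This is precisely the non-transversality you noticed (the shared line $\langle\eh^2_-\rangle$), and the paper sidesteps the crossing-form analysis entirely by leaving that one term indeterminate at finite $T$.

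Your direct comparison would run into the same degenerate term; the resolution you sketch (pin down the crossing form at the endpoint) is plausible but not carried out, whereas the paper's $e^{\vep J}$-perturbation plus finite-$T$ signature bookkeeping gives the conclusion $\{0,1\}$ with no further analysis. If you want to keep your direct route, the cleanest fix is to imitate the paper: stay at finite $T$, write everything as graphs, and observe that three signatures are determined while the degenerate one contributes $0$ or $1$.
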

\begin{proof} 

Notice that when $\pi(z)(\tau)$ is type-I or III, $\psi_0^- = -\pi/2$. If $\pi(z)(\tau) \to (\psi_0^-, \tht_0^-)$ along $\langle e_+(\psi_0^-, \tht_0^-) \rangle$, as $\tau \to -\infty$, by Proposition \ref{prop: lim V}, 
$$\lim_{\tau \to -\infty}V(\tau) =\text{span}\{\eh^1_{+}(\psi_0^-, \tht_0^-), \eh^2_{+}(\psi_0^-, \tht_0^-) \} = V^+(J\bh_-).$$
Then Lemma \ref{lem: Vs Vu} implies $V(\tau) = V^-(\tau)$, $\forall \tau \in \rr$, so $\mu(x) = \mu(V_d, V(\tau); \rr)$. This proves property (a). 

Now assume $\pi(z)(\tau) \to (\psi_0^-, \tht_0^-)$ along $\langle e_-(\psi_0^-, \tht_0^-) \rangle$, as $\tau \to -\infty$. Fix an arbitrary $T>0$ large enough in the following. It will be enough for us to prove 
\begin{equation}
 \label{eq: Vu - V} \mu(V_d, V^u(\tau); [-T, T]) - \mu(V_d, V(\tau); [-T, T]) = 0 \text{ or } 1.
 \end{equation} 
By the proof of Theorem \ref{thm4.1}, for a given $\vep>0$ small enough, 
$$ \mu(V_d, V^u(\tau); [-T, T]) = \mu(V_d, \gh(\tau, -T)e^{\vep J}V_d; [-T, T]).$$
Hence instead of \eqref{eq: Vu - V}, we will show the following 
\begin{equation*}
\label{eq: gmV - V} \mu(V_d, \gh(\tau, -T)e^{\vep J}V_d; [-T, T])-\mu(V_d, V(\tau); [-T, T]) = 0 \text{ or } 1.
\end{equation*}

Let $\Lambda_s$, $s\in[0,1]$ be a path of Lagrangian subspaces of $\rr^4$ with  $\Lambda_0=V(-T)$ and $ \Lambda_1=e^{\vep J}V_d$. Similar to \eqref{eq: comd2}, we have 
\begin{multline*}
\label{eq: comd2.1}  \mu(V_d,\hat{\gamma}(\tau,-T)e^{\vep J}V_d; -T,T)-\mu(V_d,V(\tau); -T,T) \\ =s(\gamma(T,-T)^{-1}V_d,V_d; V(-T),e^{\vep J}V_d).
\end{multline*}
Notice that $\lim_{T \to +\infty}\gamma(T,-T)^{-1}V_d=V^-(J\hat{B}_-)=\hat{e}^1_{-}(\psi_0^-, \tht_0^-)\wedge \hat{e}^2_{-}(\psi_0^-, \tht_0^-) $, and under the condition of property (b),  $\lim_{T\to +\infty}V(-T)=V(-\infty)=\hat{e}^1_{+}(\psi_0^-, \tht_0^-)\wedge \hat{e}^2_{-}(\psi_0^-, \tht_0^-) $. Therefore 
\[ s(\gamma(T,-T)^{-1}V_d,V_d; V(-T),e^{\vep J}V_d)=s(\gamma(T,-T)^{-1}V_d, e^{\frac{\vep}{2} J}V_d; V(-T),e^{\vep J}V_d). \]
When $\psi_0^-=-\pi/2$, by Lemma \ref{lem: B hat 1}
$$ \eh^1_{-}(\psi_0^-, \tht_0^-)=\left( -(\al+1)\sqrt{2\uh(\tht_0^-)}, 1 \right)^T, \;\; \eh^1_{+}(\psi_0^-, \tht_0^-)= \left( \frac{\al}{2} \sqrt{2\uh(\tht_0^-)}, 1 \right)^T,  $$
$$ \eh^2_{-}(\psi_0^-, \tht_0^-)= \left(\frac{2-\al}{4}\sqrt{2\uh(\tht_0^{-})}-\ey \sqrt{\Delta(\tht_0^{-})}, 1 \right)^T.$$
For simplicity, set $b=\sqrt{2\uh(\tht_0^-)}, c=\frac{2-\al}{4}\sqrt{2\uh(\tht_0^{-})}-\ey \sqrt{\Delta(\tht_0^{-})} $. 

Write the Lagrangian subspaces as graphs of linear maps: $ 0 \oplus \rr^2\to V_d$:
$$ V^-(J\hat{B}_-)= \text{Gr}(A_0), e^{\frac{\varepsilon}{2} J}V_d = \text{Gr}(A_1); \;\; V(-\infty) = \text{Gr}(B_0), e^{\varepsilon J}V_d= \text{Gr}(B_1),$$ 
where
  \[ A_0=\left(\begin{array}{cc}-(\alpha+1)b &0\\ 
                             0 & c \end{array}\right), \quad  A_1=\left(\begin{array}{cc}\cot(\vep/2) &0\\ 
                             0 & \cot(\vep/2) \end{array}\right);\] 
 \[ B_0=\left(\begin{array}{cc}\frac{\alpha}{2}b &0\\ 
                             0 & c \end{array}\right), \quad  B_1=\left(\begin{array}{cc}\cot(\vep) &0\\ 
                             0 & \cot(\vep) \end{array}\right).\]
  
Let $A_{0,T}, B_{0,T}$ be the matrices, such that  $\gamma(T,-T)^{-1}V_d=\text{Gr}(A_{0,T})$ and $V(-T)=\text{Gr}(B_{0,T})$. Then for $T$ large enough, $A_{0,T}, B_{0,T}$ are in the $\vep/2$-neighborhood of $A_0, B_0$ correspondingly. By the property of H\"{o}rmander index (see \eqref{hc}),  
 
\begin{multline*}
s(\gamma(T,-T)^{-1}V_d, e^{\frac{\vep}{2} J}V_d; V(-T),e^{\vep J}V_d)) = \frac{1}{2}\text{sign}(B_{0,T}-A_1)+ \\ \frac{1}{2}\text{sign}(B_1-A_{0,T}) -\frac{1}{2}\text{sign}(B_1-A_1)-\frac{1}{2}\text{sign}(B_{0,T}-A_{0,T}).
\end{multline*}

Notice that $B_{0,T}-A_1, B_1-A_1$ are negative definite, and $B_1-A_{0,T}$ is positive definite. Hence
\[ -\frac{1}{2}\text{sign}(B_{0,T}-A_1)=\frac{1}{2}\text{sign}(B_1-A_{0,T}) 
=  -\frac{1}{2}\text{sign}(B_1-A_1)=1. \]    
Since $B_{0,T}-A_{0,T}$ is in the $\vep$-neighborhood of $B_0-A_0$, which has a positive eigenvalue $ (1+\frac{3}{2}\alpha) $, we have  
\[\frac{1}{2}\text{sign}(B_{0,T}-A_{0,T})=0,\quad \text{or} \quad 1.\]
This completes our proof.
\end{proof}

\begin{cor}
\label{cor: hyperbolic m i} Assume $\Delta(\tht_0^{\pm})>0$. When $\pi(z)(\tau)$ is a type-I or III heteroclinic orbit,  
\begin{enumerate}
\item[(a).] if $\pi(z)(\tau) \to (\psi_0^-, \tht_0^-)$ along $\langle e_+(\psi_0^-, \tht_0^-)  \rangle$, as $\tau \to -\infty$, then  $m^-(x)=i(x)$;
\item[(b).] if $\pi(z)(\tau) \to (\psi_0^-, \tht_0^-)$ along $\langle e_-(\psi_0^-, \tht_0^-)  \rangle$, as $\tau \to -\infty$, then  $m^-(x) - i(x)= 0 \text{ or } 1.$
\end{enumerate}
When $\pi(z)(\tau)$ is a type-II heteroclinic orbit, 
\begin{enumerate}
\item[(c).] if $\pi(z)(\tau) \to (\psi^+_0, \tht_0^+)$ along $\langle e_-(\psi_0^+, \tht_0^+) \rangle$, as $\tau \to +\infty$, then $m^-(x)= i(x)$; 
\item[(d).] if $\pi(z)(\tau) \to (\psi^+_0, \tht_0^+)$ along $\langle e_+(\psi_0^+, \tht_0^+)  \rangle$, as $\tau \to +\infty$, then $m^-(x) - i(x)= 0 \text{ or } 1.$
\end{enumerate}
\end{cor}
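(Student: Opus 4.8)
The plan is to treat the type-I/III cases (parts (a), (b)) directly and to reduce the type-II cases (parts (c), (d)) to them by time reversal. \textbf{Parts (a), (b):} when $\pi(z)(\tau)$ is type-I or type-III, Theorem \ref{thm4.1} gives $m^-(x)=\mu(x)$ and Theorem \ref{thm4.2} gives $i(x)=\mu(V_d,V(\tau);\rr)$, so $m^-(x)-i(x)=\mu(x)-\mu(V_d,V(\tau);\rr)$. By Lemma \ref{lem4.2} this difference is $0$ if $\pi(z)(\tau)\to(\psi_0^-,\tht_0^-)$ along $\langle e_+(\psi_0^-,\tht_0^-)\rangle$ as $\tau\to-\infty$, and is $0$ or $1$ if the convergence is along $\langle e_-(\psi_0^-,\tht_0^-)\rangle$. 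This is exactly (a) and (b); no new input is needed.

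\textbf{Parts (c), (d):} for a type-II orbit the index $\mu(x)$ of Definition \ref{dfn: Maslov index} need not be defined (Corollary \ref{cor: transversal} only yields $V^+(\pm\infty)\pitchfork V_d$), so Theorem \ref{thm4.1} does not apply directly. Instead I would use the time-reversibility of \eqref{eq: Lag equation}: if $x(t)$, $t\in(T^-,T^+)$, is a zero-energy solution, so is $\tilde x(t):=x(-t)$, $t\in(-T^+,-T^-)$. Since the kinetic part of $L$ is even in $\dot x$, one has $\mathcal{F}(\tilde x;a,b)=\mathcal{F}(x;-b,-a)$, and the resulting conjugation of the second variations shows, via \eqref{dfn: morse index}, that $m^-(\tilde x)=m^-(x)$; similarly $\dot{\tilde\tht}(t)=-\dot\tht(-t)$ together with \eqref{eq: u tau =0} gives $i(\tilde x)=i(x)$. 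In McGehee coordinates, time reversal is the map $(v,u,r,\tht)(\tau)\mapsto(-v,-u,r,\tht)(-\tau)$, i.e. on $\mf$ the map $(\psi,\tht)(\tau)\mapsto(\psi(-\tau)+\pi,\tht(-\tau))$; a direct check from \eqref{eq: McGehee 2} and \eqref{eq: vector field collision mfd} (equivalently the identity $F(\psi+\pi,\tht)=-F(\psi,\tht)$ for the collision-manifold vector field $F$) shows it carries solutions to solutions. Hence a type-II orbit with $\lim_{\tau\to+\infty}\pi(z)(\tau)=(\pi/2,\tht_0^+)$ is taken to a type-III orbit $\pi(\tilde z)(\tau)$ with $\lim_{\tau\to-\infty}\pi(\tilde z)(\tau)=(-\pi/2,\tht_0^+)$, and $\tilde x$ is a parabolic-collision solution, so parts (a), (b) apply to it.

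It then remains to match the asymptotic directions. From Lemma \ref{lem: equil eigenvalue} (equivalently from $M(-\pi/2,\tht)=-M(\pi/2,\tht)$) one reads off, as one-dimensional subspaces, $\langle e_-(\pi/2,\tht)\rangle=\langle e_+(-\pi/2,\tht)\rangle$ and $\langle e_+(\pi/2,\tht)\rangle=\langle e_-(-\pi/2,\tht)\rangle$. Tracing the curve $\pi(z)(\tau)-(\pi/2,\tht_0^+)$ through the substitution $\tau\mapsto-\tau$, $\psi\mapsto\psi+\pi$ shows that $\pi(\tilde z)(\tau)$ converges to $(-\pi/2,\tht_0^+)$, as $\tau\to-\infty$, along the \emph{same} eigendirection along which $\pi(z)(\tau)$ converged to $(\pi/2,\tht_0^+)$ as $\tau\to+\infty$. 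Thus in case (c), convergence along $\langle e_-(\pi/2,\tht_0^+)\rangle$ becomes convergence of $\pi(\tilde z)$ along $\langle e_+(-\pi/2,\tht_0^+)\rangle=\langle e_+(\tilde\psi_0^-,\tilde\tht_0^-)\rangle$, the hypothesis of part (a), giving $m^-(x)=m^-(\tilde x)=i(\tilde x)=i(x)$; and in case (d), convergence along $\langle e_+(\pi/2,\tht_0^+)\rangle$ becomes the hypothesis of part (b), giving $m^-(x)-i(x)=m^-(\tilde x)-i(\tilde x)\in\{0,1\}$.

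\textbf{Main obstacle:} all analytic content already resides in Theorems \ref{thm4.1}, \ref{thm4.2} and Lemma \ref{lem4.2}; the delicate part of parts (c), (d) is the bookkeeping — verifying that time reversal composed with $\psi\mapsto\psi+\pi$ is a symmetry of the collision-manifold flow, and then keeping the labels $(\psi_0^\pm,\tht_0^\pm)$, the eigendirections $e_\pm$ and the two ends $\tau\to\pm\infty$ consistent so that type-II case (c) lands in type-III case (a) rather than case (b). A minor but necessary companion point is the reversal-invariance of $m^-$ and $i$ at the level of the limiting definitions \eqref{dfn: morse index}, \eqref{dfn: osc index}, which follows from the reversal-invariance of the second variation of $\mathcal{F}$.
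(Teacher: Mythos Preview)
Your proposal is correct and follows essentially the same approach as the paper: parts (a) and (b) are an immediate combination of Theorem \ref{thm4.1}, Theorem \ref{thm4.2} and Lemma \ref{lem4.2}, and parts (c) and (d) are reduced to (a) and (b) by the time-reversal $\tilde x(t)=x(-t)$, which in McGehee coordinates reads $(\tilde v,\tilde u,\tilde r,\tilde\tht)(\tau)=(-v,-u,r,\tht)(-\tau)$ and on $\mf$ becomes $(\tilde\psi,\tilde\tht)(\tau)=(\psi+\pi,\tht)(-\tau)$. The paper likewise uses $M(\psi_0^++\pi,\tht_0^+)=-M(\psi_0^+,\tht_0^+)$ to obtain the swap $e_\pm(\tilde\psi_0^-,\tilde\tht_0^-)=e_\mp(\psi_0^+,\tht_0^+)$, exactly your eigendirection matching; your additional remarks on why $m^-$ and $i$ are reversal-invariant are a welcome elaboration of what the paper leaves as ``by their definitions''.
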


\begin{proof}
Property (a) and (b) follows directly from Theorem \ref{thm4.1}, \ref{thm4.2} and Lemma \ref{lem4.2}. 

For property (c) and (d), as the corresponding $x(t)$ is a collision-parabolic solution, $\tilde{x}(t) = x(-t)$ will be a parabolic-collision solution. By their definitions, it is not hard to see $m^-(\tilde{x}) = m^-(x)$ and $i(\tilde{x})= i(x)$. 

Let $\tilde{z}$ be the zero energy orbit of \eqref{eq: Hamiltonian equation} corresponding to $\tilde{x}$, and $(\tilde{v}, \tilde{u}, \tilde{r}, \tilde{\tht})(\tau)$ the corresponding orbit in McGehee coordinates, then by the computation given at the beginning of Section \ref{sec: McGehee coordinates}, we have
$$ (\tilde{v}, \tilde{u}, \tilde{r}, \tilde{\tht})(\tau)= (-v, -u, r, \tht)(-\tau). $$  
As a result, on the collision manifold $\mf$ with coordinates defined in \eqref{eq: uv psi}, we have 
$$ (\tilde{\psi}, \tilde{\tht})(\tau) = (\psi+\pi, \tht)(-\tau). $$
Then
$$ (\tilde{\psi}^-_0, \tilde{\tht}^-_0):= \lim_{\tau \to -\infty}(\tilde{\psi}, \tilde{\tht})(\tau) = \lim_{\tau \to -\infty} (\psi+\pi, \tht)(-\tau) = (\psi_0^+ +\pi, \tht_0^+). $$
By \eqref{eq: linear equil}, 
$$ M(\tilde{\psi}_0^-, \tilde{\tht}_0^-) = M(\psi_0^+ +\pi, \tht_0^+)= -M(\psi_0^+, \tht_0^+). $$ 
As a result,
$$ e_+(\tilde{\psi}_0^-, \tilde{\tht}_0^-) = e_-(\psi_0^+, \tht_0^+), \;\; e_-(\tilde{\psi}_0^-, \tilde{\tht}_0^-) = e_+(\psi_0^+, \tht_0^+). $$
Then the rest follows from property (a) and (b), which we have already proven. 
\end{proof}

In the above we always assume $\Delta(\tht_0^{\pm})>0$, to deal the non-hyperbolic case, i.e., $\Delta(\tht_0^{\pm}) <0$, we have the next proposition
\begin{prop}
\label{prop: non-hyper index} If at least one of $\Delta(\tht_0^{\pm})$ is negative, then $m^-(x) = i(x)=+\infty$.
\end{prop}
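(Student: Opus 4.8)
The plan is to reduce the claim to a spectral statement about the limiting frozen-coefficient system and then argue that a focus equilibrium forces infinitely many zeros of $u$ (hence $i(x)=+\infty$), while simultaneously forcing the Maslov/Morse count to diverge. Assume without loss of generality that $\Delta(\tht_0^-)<0$ (the case $\Delta(\tht_0^+)<0$ is handled by the time-reversal trick used in the proof of Corollary \ref{cor: hyperbolic m i}, replacing $x(t)$ by $x(-t)$). First I would note that the oscillation index is already infinite: by Lemma \ref{lem: equil eigenvalue}(e) and Lemma \ref{lem: equi locally}(d)-(e), when $\Delta(\tht_0^-)<0$ the equilibrium $(\psi_0^-,\tht_0^-)$ on the collision manifold is a focus, so the heteroclinic orbit $\pi(z)(\tau)$ spirals into (or out of) it as $\tau\to-\infty$; since $\tht'=u$ and the $\psi$-coordinate makes infinitely many full turns, $\cos\psi$ changes sign infinitely often, so $\{\tau : u(\tau)=0\}$ is infinite. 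By \eqref{eq: u tau =0}, $i(x)=+\infty$.

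Next I would handle $m^-(x)=+\infty$. The idea is that because $m^-(x;t_1,t_2)$ is monotone in the interval (inequality \eqref{4.1}), it suffices to produce, for every $N$, an interval on which the negative index of $d^2\mathcal F$ is at least $N$. I would work in the normalized coordinates \eqref{eq: linearied normalized} near $\tau=-\infty$, where $\hat B(\tau)\to \hat B_- = \bhy_-\diamond\bhe_-$. The block $J\bhy_-$ is hyperbolic, but $J\bhe_-$ has the pair of complex eigenvalues $\lh^2_\pm=\pm\tfrac12\sqrt{\Delta(\tht_0^-)}\in\cc\setminus\rr$ (Lemma \ref{lem: B hat 1}), so the corresponding $2\times2$ subsystem is a linear oscillator with nonzero frequency $\tfrac12\sqrt{|\Delta(\tht_0^-)|}$. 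A frozen-coefficient oscillator accumulates Maslov index linearly in the length of the time interval: each half-period contributes a crossing of $V_d$ (or of any fixed Lagrangian transverse to the rotating line). Making this rigorous is the main step — one invokes a Sturm-type comparison / the standard fact that for $\eta'' + \omega^2\eta = O(\text{small})$ the number of sign changes of $\eta$ on $[-T,-T_0]$ grows like $\omega(T-T_0)/\pi$ — and then uses the Morse index theorem $m^-(x;t_1,t_2)+2 = \mu(V_d,\gamma V_d;[t_1,t_2])$ together with the change-of-time-parameter lemma to translate growth of the Maslov index into growth of $m^-$. Letting $T\to\infty$ and using \eqref{4.1} gives $m^-(x)=+\infty$.

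Two technical points I would be careful about. First, the perturbation: $\hat B(\tau)$ only converges to $\hat B_-$ as $\tau\to-\infty$, so one must choose $T_0$ large enough that $\|\hat B(\tau)-\hat B_-\|$ is small for $\tau\le -T_0$ and then absorb the error into the Sturm comparison — this is routine since the oscillation of the limiting equation is structurally stable (nonzero imaginary part of the eigenvalues persists under small perturbation), but it needs to be stated. Second, I must restrict the quadratic form $d^2\mathcal F$ to the relevant directions: the oscillating behaviour lives in the $\bhe$-block, which in the original variables corresponds (via $R(\tau)$) to the $(\tht,\dot\tht)$-type directions, i.e. variations of the angular coordinate; restricting test functions in $W_0^{1,2}$ to this two-dimensional subbundle and counting sign changes there gives the lower bound on $m^-$ directly, bypassing the full Maslov machinery if one prefers. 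I expect the main obstacle to be exactly this bookkeeping — cleanly isolating the oscillatory $2\times2$ block from the hyperbolic $\bhy$-block and from the $r$-direction that $R(\tau)$ was designed to strip off — rather than any deep new idea; the spectral input is entirely supplied by Lemma \ref{lem: equil eigenvalue}(e) and Lemma \ref{lem: B hat 1}.
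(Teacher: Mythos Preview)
Your proposal is correct and follows essentially the same route as the paper. The paper treats the case $\Delta(\tht_0^+)<0$ (the other being symmetric), and for $m^-(x)=+\infty$ it makes your ``Sturm comparison'' step precise via the monotonicity of the Maslov index in the symmetric matrix: from $\hat B(\tau)>\hat B_+-\vep I_4$ for $\tau\ge\tau_0$ one gets $\mu(V_d,\hat\gamma(\tau,\tau_0)V_d)\ge \mu(V_d,e^{(\tau-\tau_0)J(\hat B_+-\vep I_4)}V_d)$, then splits by symplectic additivity and observes that $\hat B^{(2)}_+-\vep I_2$ is positive definite (since $\det \hat B^{(2)}_+=-\tfrac14\Delta(\tht_0^+)>0$ and the trace is positive), so the frozen $2\times2$ flow rotates and accumulates crossings with $V_d$ without bound. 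For $i(x)=+\infty$ the paper uses exactly your argument: the focus picture from Lemmas~\ref{lem: equil eigenvalue}(e) and~\ref{lem: equi locally} forces the heteroclinic orbit to spiral, so $u=\tht'$ vanishes infinitely often.
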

 
\begin{proof}
We only give the details for the case $\Delta(\tht_0^{+}) <0$, while the proof for the other case is exactly the same. 

For $\vep>0$ small enough, we can find a $\tau_0>0$, such that $\|\hat{B}(\tau)-\hat{B}_+\|<\vep$, for any $\tau\in[\tau_0,+\infty)$. By \eqref{4.8}, if $ \lim_{\tau_1 \to +\infty} \mu(V_d,\hat{\gamma}(\tau,\tau_0))V_d; [\tau_0, \tau_1])= +\infty$, then $m^-(x)=+\infty$.
Since $\hat{B}(\tau)>\hat{B}_+-\vep I_4$, from the monotonic property of Maslov index, 
\begin{equation*}
 \mu(V_d,\hat{\gamma}(\tau,\tau_0))V_d; [\tau_0, \tau_1])\geq  \mu(V_d,e^{(\tau-\tau_0)(\hat{B}_+-\vep I_4)}V_d; [\tau_0, \tau_1]), \; \forall \tau_1>\tau_0. 
 \end{equation*} 
By the symplectic additivity property,
\begin{multline*}
\mu(V_d,e^{(\tau-\tau_0)(\hat{B}_+-\vep I_4)}V_d)= \\ \mu(V_d,e^{(\tau-\tau_0)(\hat{B}^{(1)}_+-\vep I_2)}V_d)+\mu(V_d,e^{(\tau-\tau_0)(\hat{B}^{(2)}_+-\vep I_2)}V_d).
\end{multline*}
Since in this case the crossing form is always positive, $\mu(V_d,\exp((\tau-\tau_0)(\hat{B}^{(i)}_+-\vep I_2))V_d)$ is the summation of  $\dim(\exp((\tau-\tau_0)(\hat{B}^{(i)}_+-\vep I_2)V_d)\cap V_d)$ over $\tau\in[\tau_0,\tau_1]$, for $i =1, 2$. As a result, 
\[ \mu(V_d,e^{(\tau-\tau_0)(\hat{B}_+-\vep I_4)}V_d)\geq \sum_{\tau\in[\tau_0,\tau_1]}\dim e^{(\tau-\tau_0)(\hat{B}^{(2)}_+-\vep I_2)}V_d \cap V_d.  \]
Notice that 
$$ \hat{B}^{(2)}_+-\vep I_2=\left(
 \begin{array}{cccc}
 1-\vep & \frac{2-\alpha}{4}\sqrt{2\uh(\theta_0^+)} \\
\frac{2-\alpha}{4}\sqrt{2\uh(\theta_0^+)} & -\uh_{\tht \tht}(\theta_0^+)-\vep  
\end{array}
\right).$$
For $\vep$ small enough, $ \hat{B}^{(2)}_+-\vep I_2>0$, a direct computation shows the summation of crossing time is unbounded as $\tau_1 \to +\infty$. Hence $m^-(x)=+\infty$. 

For $i(x)$, when $\Delta(\tht_0^+) < 0$, by Lemma \ref{lem: equil eigenvalue} and \ref{lem: equi locally}, $(\psi_0^+, \tht_0^+)$ is a stable focus. As a heteroclinic orbit on $\mf$, $\pi(z)(\tau)$ spiral into $(\psi_0^+, \tht_0^1)$ as $\tau \to +\infty$. Therefore $i(x) = \# \{ \tau \in \rr: \tht'(\tau) =0 \}= +\infty$.
\end{proof}

Now we are ready to prove our main results. 

\begin{proof}[Proof of Theorem \ref{thm: osc morse}]
The fact the $\tht_0^{\pm}$ are critical points of $\uh$ follows directly from Theorem \ref{thm: zero energy}. Let's assume $x(t)$ is either a parabolic or a parabolic-collision solution (a collision-parabolic solution becomes parabolic-collision after reversing time). Since $x(t)$ is not homothetic, the corresponding orbit $\pi(z)(\tau)$ on the collision manifold $\mf$ is either a type-I or III heteroclinic orbit.  

Now if one of $\Delta(\tht_0^{\pm})$ is negative, then $m^-(x)= i(x)= +\infty$, by Proposition \ref{prop: non-hyper index}. This proves property (a). If both $\Delta(\tht_0^{\pm})$ are positive, then property (b) follows from Lemma \ref{lem: heteroclinic types}. In particular, when $\tht_0^-$ is a non-degenerate local minimizer, $\uh_{\tht \tht}(\tht_0^-)>0$. Then by Lemma \ref{lem: equil eigenvalue}, $\lmd_-(\psi_0^-, \tht_0^-) < 0 < \lmd_{+}(\psi_0^-, \tht_0^+)$, and by Lemma \ref{lem: equi locally}, the unstable manifold of $(\psi_0^-, \tht_0^-)$ in the collision manifold is tangent to $\langle e_+(\psi_0^-, \tht_0^-) \rangle$. Hence $\pi(z)(\tau)$ approaches to $(\psi_0^-, \tht_0^-)$ along $\langle e_+(\psi_0^-, \tht_0^-) \rangle$, as $\tau \to -\infty$. Then by property (a) in Corollary \ref{cor: hyperbolic m i}, $m^-(x)=i(x)$. 
\end{proof}

\begin{proof}[Proof of Corollary \ref{cor: saddle to saddle}] 
By Theorem \ref{thm: osc morse}, it is enough to show $i(x)=0$. Meanwhile by \eqref{eq: u tau =0} and \eqref{eq: uv psi}, this is equivalent to $\psi(\tau) \ne \pm \pi/2$, for any $\tau \in \rr$.

Assume $\psi(\tau_0)= \frac{\pi}{2}$ (the case for $\psi(\tau_0)=-\frac{\pi}{2}$ is similar), for some $\tau_0 \in \rr$, then $v(\tau_0)= \sqrt{\uh(\tht(\tau_0))}$. Recall that $v(\tau)$ is a non-decreasing function of $\tau$, so $v(\tau_0)=\sqrt{\uh(\tht(\tau_0))} \le v(+\infty)= \sqrt{\uh(\tht_0^{+})}$. This means $\tht(\tau_0)$ must be a global minimizer of $\uh$ as well. Then by Lemma \ref{lem: equail}, $(\psi(\tau_0), \tht(\tau_0))$ is a equilibrium in the collision manifold, which is absurd. 
\end{proof}

Our next proof follows ideas from \cite{BS} and \cite{BHPT}. 

\begin{proof} [Proof of Theorem \ref{thm: homothtic}]
Without loss of generality let's assume $\xb(t)$ is a collision-parabolic solution defined on $\rr^+=(0, +\infty)$. With the energy being zero, we have 
$$ \rb(t) = (\kp t)^{\frac{2}{2+\al}}, \text{ where } \kp = \frac{2+\al}{2} \sqrt{2\uh(\tht_0)}.$$

Recall that in polar coordinates, the action functional is 
$$ \cf(r, \tht)= \int \ey \dot{r}^2 + \ey r^2 \dot{\tht}^2 +r^{-\al} \uh(\tht) \,dt. $$
By results from  \cite{LM14}, for any $ [t_0, t_1] \subset (0, +\infty)$,  $\bar{x}(t)$ is a minimizer of $\cf$ in 
$$ \{ (r, \tht) \in W^{1,2}([t_0, t_1], \rr^+ \times \mathbb{S}^1): \; r(t_0)= \rb(t_0), r(t_1) = \rb(t_1), \tht(t) \equiv \tht_0 \}. $$ 
Therefore we only need to consider variations of $\cf$ along $\phi \in C_0^{\infty}(\rr^+, \mathbb{S}^1)$ (smooth functions with compact supports). The second derivative of $\cf$ along such a $\phi$ is 
\begin{equation}
\label{eq: F sec dev tau} d^2\cf(r, \tht)[\phi, \phi] = \int r^2 \dot{\phi}^2 + r^{-\al}\uh_{\tht \tht}(\tht) \phi^2 \,dt = \int \rb^{\frac{2-\al}{2}} \big( (\phi')^2+\uh_{\tht \tht}(\tht_0) \phi^2 \big) \,d\tau,
\end{equation}

If $\xi(t)= \rb(t)^{\frac{2-\al}{4}}\phi(t)$, then $\xi' =\rb^{\frac{2-\al}{4}}\phi' + \frac{2-\al}{4}\rb^{-\frac{2+\al}{4}}r' \phi$, and
\begin{equation}
\label{eq: xi} \begin{split}
\rb^{\frac{2-\al}{2}}(\phi')^2 & = (\xi')^2 + \frac{(2-\al)^2}{16} \big(\frac{\rb'}{\rb}\big)^2 \xi^2 - \frac{2-\al}{2} \big(\frac{\rb'}{\rb} \big)\xi \xi' \\
& = (\xi')^2 + \frac{(2-\al)^2}{8}\uh(\tht_0) \xi^2 - \frac{2-\al}{2}\sqrt{2\uh(\tht_0)} \xi \xi' ,
\end{split}
\end{equation}
where the second equality following from 
$$ \frac{\rb'}{\rb} = \dot{\rb} \rb^{-1} \frac{dt}{d\tau}= \dot{\rb} \rb^{\frac{\al}{2}} = \frac{2}{2+\al} \kp = \sqrt{2 \uh(\tht_0)}. $$
Plug \eqref{eq: xi} into \eqref{eq: F sec dev tau}, we get 
$$ d^2\cf(\rb, \tht_0)[\phi, \phi]= \int (\xi')^2 + \frac{1}{4}\Delta(\tht_0) \xi^2 -\frac{2-\al}{2} \sqrt{2\uh(\tht_0)} \xi \xi'\,d\tau.  $$
As $\xi$ has a compact support in $\rr^+$, using integration by parts,
$$ d^2\cf(\rb, \tht_0)[\phi, \phi]= \int (\xi')^2 +\frac{1}{4}\Delta(\tht_0) \xi^2 \,d\tau.  $$

When $\Delta(\tht_0) \ge 0$, $d^2 \cf(\rb, \tht_0)[\phi, \phi] \ge 0$, for any $\phi$, so $m^-(x)=0$. When $\Delta(\tht_0)< 0$, there is a countable set of linear independent functions $\{\phi_n \in C^{\infty}_0(\rr^+, \mathbb{S}^1): n \in \mathbb{Z}^+ \}$ satisfying $d^2 \cf(\rb, \tht_0)[\phi_n, \phi_n]<0$ (see \cite[Theorem 4.3]{BS}), which implies $m^-(x) = +\infty$. 
\end{proof}



\section{Application in Celestial Mechanics} \label{sec: application}

In this section, we give some applications of our results to celestial mechanics. 

\subsection{The planar isosceles three body problem } \label{sec: isosceles}
Consider the problem of three point masses, $m_i$, $i=1,2,3$, in a plane moving under the Newtonian gravitational force of each other. Let $q=(q_1, q_2, q_3)$, where $q_i$ represents the position of $m_i$, and $p = M \dot{q}$, where $M= \text{diag}(m_1, m_1, m_2, m_2, m_3, m_3)$, then 
\begin{equation} \label{eq: three body}
\dot{p} = \nabla_q \tilde{U}(q); \;\; \dot{q} = M^{-1} p, 
\end{equation}
where $\tilde{U}(q)= \sum_{1 \le i < j \le 3} \frac{m_i m_j}{|q_i -q_j|}$, is the (negative) potential. This is equivalent to the Euler-Lagrangian equation $\frac{d}{dt}L_{\dot{q}}(q, \dot{q})= L_q(q, \dot{q})$ with
\begin{equation}
\label{eq: Lagrangian three body} L(q, \dot{q}) = K(\dot{q}) +\tilde{U}(q) = \ey |\dot{q}|_M^2 + \tilde{U}(q), 
\end{equation}
where $|w|_M:=(\sum_{i=1}^3 m_i |w_i|^2)^{\ey}$, for any $w=(w_1, w_2, w_3) \in \rr^{2 \times 3}$. 

The above problem has six degrees of freedom. It can be reduced to four after fixing the center of mass at the origin, $\sum_{i=1}^3 m_i q_i=0$. Moreover when two of the masses are equal ($m_1 =m_2$), it has an invariant sub-system with two degrees of freedom, where the three masses form an isosceles triangle all the time: 
\begin{equation}
\label{eq: reflection} \{q=(q_1, q_2, q_3)| \; q_2 = \mr(q_1), q_3 = \mr(q_3) \}.
\end{equation}
Here $\mr$ represents the reflection in $\rr^2$ with respect to the vertical axis. 

For simplicity, we assume $m_1= m_2 =m$ and $m_3=1$. Let
$$ r= |q|_M  \; \text{ and } \; s_i = q_i/r, \;\; \forall i =1,2,3. $$
Then $s=(s_1, s_2, s_3)$ satisfies  $|s|_M =1$. Set $s_1=(\xi, \eta)$, by \eqref{eq: reflection}, 
\begin{equation*}
s_2= (-\xi, \eta), \;\; s_3 = (0, -2m \eta),
\end{equation*}
which means 
$$ |s|^2_M=2m \xi^2 + 2m(2m+1) \eta^2=1. $$
This allows us to introduce an angular variable $\tht \in \mathbb{S}^1$ by
$$ \xi= \frac{\cos \tht}{\sqrt{2m}}, \;\; \eta= \frac{\sin \tht}{\sqrt{2m (2m+1)}}. $$

Under the new variables $(r, \tht)$, the Lagrangian of the isosceles three problem has the following expression which fits the framework of this paper:
\begin{equation*}
\label{eq: uh isosceles} L(r, \tht, \dot{r}, \dot{\tht}) = \ey (\dot{r}^2 +r^2 \dot{\tht}^2) +\frac{\uh(\tht)}{r}, \; \text{where } \uh(\tht) = \frac{m^{\frac{5}{2}}}{\sqrt{2}|\cos \tht|} + \frac{2 \sqrt{2}m^{\frac{3}{2}}}{(1 +2m \sin^2 \tht)^{\frac{1}{2}}}.
\end{equation*}
However besides the singularity at the origin, $r=0$, corresponding a triple collision. There are additional singularities at $\tht= \pm \frac{\pi}{2}$ due to binary collisions between $m_1$ and $m_2$. Although a double collision can be regularized (see \cite[Section 7]{SM95} or \cite{Mk81}), it is not so clear how to define the corresponding Morse index in this case, so when applying our results, we have to restrict ourselves to a domain of the zero energy solution, where there is no binary collision. 

It is easy to see $\uh(\tht)$ has four different non-degenerate global minima: 
$$ -\pi+\tht^*< -\tht^*< \tht<  \pi-\tht^*, \; \text{ for some } \tht^* \in (0, \pi/2),$$
which are the Lagrangian configurations, where the three masses form an equilateral triangle. The second derivatives of $\uh(\tht)$ at these critical points all are positive, so the condition required in Lemma \ref{lem: B hat 1} always holds at these points.

Meanwhile there are two non-degenerate critical points at $\tht = 0$ or $\pi$, which are local maxima of $\uh$. They are the Euler configurations with $m_3$ at the origin. By a direct computation, 
$$ \uh(0) = \uh(\pi)= \frac{m^{\frac{5}{2}}}{\sqrt{2}}+ 2 \sqrt{2} m^{\frac{3}{2}}, \;\; \uh_{\tht \tht}(0) = \uh_{\tht \tht}(\pi) = -\frac{7}{\sqrt{2}}m^{\frac{5}{2}}. $$
Recall that for $\al=1$, $\Delta(\tht_0) = \ey \uh(\tht_0) +4 \uh_{\tht \tht}(\tht_0)$. Then $\Delta(0)= \Delta(\pi)$ are positive, when $m < 4/55$, and negative, when $m > 4/55.$ As shown by Moeckel \cite{Mk81}, if a zero energy solution (non-homothetic) approaches to the origin or the infinity along the horizontal axis (or equivalently the configuration formed by the three masses converges to a Euler configuration), then for a generic $m > 4/55$, during the process, the three masses oscillate frequently along the horizontal axis. This corresponds to the change of the sign of $\dot{\tht}(t)$, which by our results gives an estimate of the Morse index of the solution.

\subsection{The Kepler-type problem} \label{sec: Kepler}
In our results, we require the critical points of $\uh$ to be non-degenerate. In general our approach may still work even when this condition does not hold. What we need is the knowledge of the asymptotic behavior of $V(\tau)$ defined in Lemma \ref{lem: V Lag}, as $\tau$ goes to infinity. This is important as in celestial mechanics these critical points corresponds to central configurations, which are degenerate due to symmetries. As an example, we will consider the \emph{Kepler-type} problem, where each $\tht$ is a degenerate critical point of $\uh$:
$$ \uh(\tht) \equiv m, \; \; \forall \tht \in \mathbb{S}^1, \text{ for some constant } m>0. $$ 

Now the vector field \eqref{eq: vector field collision mfd} on the collision manifold $\mf$ becomes
\begin{equation} \label{eq: vector field kepler}
\begin{cases}
\psi'&=(1-\frac{\alpha}{2}) \sqrt{2m} \cos\psi, \\
  \theta'&= \sqrt{2m} \cos\psi,
\end{cases} 
\end{equation}
and very $(\psi_0, \tht_0)$ with $\psi_0 \in \{ \pm \frac{\pi}{2}\}$, $\tht_0 \in \mathbb{S}^1$, is an equilibrium. Let $M(\psi_0, \tht_0)$ be defined as in \eqref{eq: linear equil}. Following Notation \ref{dfn: eigen value vector}, by Lemma \ref{lem: equil eigenvalue}, 
\begin{equation*}
\begin{cases}
& -(2 -\al)= \lmd_-(\frac{\pi}{2}, \tht_0) <  \lmd_+(\frac{\pi}{2}, \tht_0) =0; \\
& e_-(\frac{\pi}{2}, \tht_0) = (\frac{2-\al}{2}, 1)^T, \;\; e_+(\frac{\pi}{2}, \tht_0) = (0, 1)^T, 
\end{cases}
\end{equation*}
\begin{equation*}
\begin{cases}
& 0= \lmd_-(-\frac{\pi}{2}, \tht_0) <  \lmd_+(-\frac{\pi}{2}, \tht_0) =(2 -\al); \\
& e_-(-\frac{\pi}{2}, \tht_0) =  (0, 1)^T, \;\; e_+(-\frac{\pi}{2}, \tht_0) = (\frac{2-\al}{2}, 1)^T
\end{cases}
\end{equation*}

Let $x(t)$ be a parabolic solution of the Kepler-type problem, then its projection to the collision manifold is a heteroclinic orbit going from $(-\frac{\pi}{2}, \tht_0^-)$ to $(\frac{\pi}{2}, \tht_0^+)$. Since $\tht_0^{\pm}$ are degenerate, Lemma \ref{lem: equi locally} does not apply. However by \eqref{eq: vector field kepler},
\begin{equation}
\label{eq: lim psi' tht'} \frac{\psi'}{\tht'}(\tau) = \frac{2-\al}{2}, \;\; \forall \tau \in \rr.
\end{equation}
Hence the heteroclinic orbit converges to $(\frac{\pi}{2}, \tht_0^+)$ along the subspace $\langle e_-(\frac{\pi}{2}, \tht_0^+) \rangle$, as $\tau \to +\infty$, and converges to $(-\frac{\pi}{2}, \tht_0^-)$ along the subspace $\langle e_+(-\frac{\pi}{2}, \tht_0^- \rangle$, as $\tau \to -\infty$, which means it is a type-I heteroclinic orbit. 

Let $V(\tau) =\text{span}\{\eta_1(\tau), \eta_2(\tau) \}$ be the path of Lagrangian subspaces given in Definition \ref{defi: V Lag}. With \eqref{eq: lim psi' tht'}, the same computation used in the proof of Lemma \ref{lem: lim uh_tht u} shows $\lim_{\tau \to \pm \infty} \frac{\uh_{\tht}(\tht)}{u} = 0.$ Recall that $\lmd_+(\frac{\pi}{2}, \tht_0^+)= \lmd_-(\frac{\pi}{2}, \tht_0^-)=0$, so results of Lemma \ref{lem: lim uh_tht u} still hold. Then by Proposition \ref{prop: lim V}, 
$$ \lim_{\tau \to \pm\infty} V(\tau) = \text{span}\{\eh^1_{\mp}(\pm\frac{\pi}{2}, \tht_0^\pm), \eh^2_{\mp}(\pm\frac{\pi}{2}, \tht_0^\pm) \}. $$
Notice that for the Kepler-type potential,
$$ \eh^1_{-}(\pi/2, \tht_0^+)= (-\al \sqrt{m/2}, 1)^T, \;\; \eh^2_{-}(\pi/2, \tht_0^+) = (-(2-\al)\sqrt{m/2}, 1)^T; $$
$$ \eh^1_{+}(-\pi/2, \tht_0^-)= (\al \sqrt{m/2}, 1)^T, \;\; \eh^2_{+}(-\pi/2, \tht_0^+) = ((2-\al)\sqrt{m/2}, 1)^T. $$
This means the corresponding results in Section \ref{sec: Morse Maslov indices} will still hold. In particular, by Corollary \ref{cor: hyperbolic m i}, $ i(x) = \mu(x) = m^-(x). $

Since the angular momentum is a first integral of the Kepler-type problem, for a parabolic solution (so non-homothetic), $\dot{\tht}(t)$ is always positive or negative. Together with the above result they imply
\begin{cor}
For a Kepler-type problem, the Morse index of a parabolic solution is always zero. 
\end{cor}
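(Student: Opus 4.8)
The plan is to combine the index identity just established for Kepler-type parabolic solutions with the conservation of angular momentum, which forces the oscillation index to vanish.

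First I would observe that for the Kepler-type potential $\uh(\tht)\equiv m$ one has $\uh_{\tht}\equiv 0$, so the force in \eqref{eq: Lag equation} is central and the angular momentum $p_2=r^2\dot{\tht}$ is a first integral along every solution; indeed, from \eqref{eq: Hamiltonian equation} one has $\dot{p}_2=-\partial_{\tht}H=r^{-\al}\uh_{\tht}(\tht)=0$. Since a parabolic solution $x(t)$ is non-homothetic (a homothetic solution must collide with the origin at a finite time, as noted after Definition \ref{def: parabolic}), we have $\dot{\tht}\not\equiv 0$; as $r(t)>0$ for all $t\in(T^-,T^+)$, the constant $p_2=r^2\dot{\tht}$ is nonzero, hence $\dot{\tht}(t)=p_2/r^2(t)\ne 0$ for every $t$. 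Thus $\{t\in(T^-,T^+):\dot{\tht}(t)=0\}=\emptyset$, and by \eqref{dfn: osc index} we get $i(x)=0$.

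Second, I would invoke the identity $m^-(x)=i(x)$ recorded just above for the Kepler-type problem. This holds because $\pi(z)(\tau)$ is a type-I heteroclinic orbit converging to $(-\tfrac{\pi}{2},\tht_0^-)$ along $\langle e_+(-\tfrac{\pi}{2},\tht_0^-)\rangle$ as $\tau\to-\infty$, the matrices $J\bh_{\pm}$ are hyperbolic with their stable/unstable eigendata computed explicitly, and $\lim_{\tau\to\pm\infty}V(\tau)$ is the subspace predicted by Proposition \ref{prop: lim V}; consequently case (a) of Corollary \ref{cor: hyperbolic m i} applies and gives $m^-(x)=i(x)$. Combining the two steps yields $m^-(x)=0$.

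The argument is short and presents no serious obstacle; the only point deserving care is that the derivation of $m^-(x)=i(x)$ in Section \ref{sec: Morse Maslov indices}, carried out under the non-degeneracy hypothesis of Theorem \ref{thm: osc morse}, transfers to the degenerate Kepler-type setting. This has in effect already been checked above: Lemma \ref{lem: lim uh_tht u}, Proposition \ref{prop: lim V} and Lemma \ref{lem4.2} use only the hyperbolicity of $J\bh_{\pm}$ and the known asymptotics of the heteroclinic orbit along the eigendirections $\langle e_{\pm}\rangle$, both of which remain valid even though $\uh_{\tht\tht}(\tht_0^{\pm})=0$.
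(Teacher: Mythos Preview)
Your proof is correct and follows essentially the same approach as the paper: conservation of angular momentum forces $\dot{\tht}$ to be nowhere zero (hence $i(x)=0$), and the identity $m^-(x)=i(x)$ established just above for the Kepler-type setting then gives $m^-(x)=0$. Your write-up is more detailed in justifying why $p_2\ne 0$ and in explaining why the degenerate case still falls under Corollary \ref{cor: hyperbolic m i}(a), but the argument is the same.
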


\section{Appendix: a brief introduction to the Maslov index for heteroclinic orbits  }\label{sec: appendix B}

We start with a brief review of the  Maslov index theory from \cite{Ar,CLM, RS1}.  Let $(\mathbb{R}^{2n},\omega)$ be the standard
symplectic space, and $\text{Lag}(2n)$ the Lagrangian Grassmanian, i.e. the set of
Lagrangian subspaces of $(\mathbb{R}^{2n},\omega)$. Given two continuous paths $L_1(t),L_2(t)$, $t\in[a,b]$, in $\text{Lag}(2n)$, the Maslov index $\mu(L_1(t), L_2(t))$ is an integer invariant. There several different ways to define such an invariant. Here we use the one given in \cite{CLM}. Following are some properties of the Maslov index (for the details see \cite{CLM}).

\textbf{Property I. (Reparametrization invariance)}  Let $\vr:[c,d]\rightarrow [a,b]$ be a continuous and piecewise smooth function satisfying $\vr(c)=a$, $\vr(d)=b$, then \begin{equation} 
\mu(L_1(t), L_2(t))=\mu(L_1(\vr(\tau)), L_2(\vr(\tau))). \label{adp1.1} 
\end{equation}

\textbf{Property II. (Homotopy invariant with end points)} If two continuous
families of Lagrangian paths $L_1(s,t)$, $L_2(s,t)$, $0\leq s\leq 1$, $a\leq
t\leq b$ satisfies  $\text{dim}(L_1(s,a)\cap L_2(s,a))=C_1, \text{dim}(L_1(s,b)\cap L_2(s,b)) = C_2,$, for any $0\leq s\leq 1$, where $C_1, C_2$ are two constant integers, then
\begin{equation} 
\mu(L_1(0,t), L_2(0,t))=\mu(L_1(1,t),L_2(1,t)). \label{adp1.2} 
\end{equation}

\textbf{Property III. (Path additivity)}  If $a<c<b$, then  
\begin{equation}
\mu(L_1(t),L_2(t))=\mu(L_1(t),L_2(t); [a,c])+\mu(L_1(t),L_2(t); [c,b]).
\label{adp1.3} 
\end{equation}

\textbf{Property IV. (Symplectic invariance)} Let $\ga(t)$, $t\in[a,b]$ be a
continuous path of symplectic matrices in $\Sp(2n)$, then 
\begin{equation} 
\mu(L_1(t),L_2(t))=\mu(\ga(t)L_1(t), \ga(t)L_2(t)). \label{adp1.4} 
\end{equation}

\textbf{Property V. (Symplectic additivity)} Let $W_i$, $i=1,2$, be two symplectic spaces, if $L_i \in C([a,b], Lag(W_1))$ and $\hat{L}_i \in C([a,b], Lag(W_2))$, $i =1, 2$, then 
\begin{equation} 
\mu(L_1(t)\oplus \hat{L}_1(t),L_2(t)\oplus \hat{L}_2(t))= \mu(L_1(t),L_2(t))+
\mu(\hat{L}_1(t),\hat{L}_2(t)). 
\label{adp1.5add} 
\end{equation}

\textbf{Property VI. (Symmetry)} If $L_i \in C([a,b], \text{Lag}(2n)$, $i=1,2$, then
\begin{equation}  \mu(L_1(t), L_2(t))= \dim L_1(a)\cap L_2(a)-\dim L_1(b)\cap L_2(b)  -\mu(L_2(t),L_1(t)).    \label{adp1.5sym}       
\end{equation}

When the Hamiltonian system is given by the Legender transformation of a Sturm-Liouville system, then 
 \begin{equation}
\mu(V_d,\Lambda(t))=\text{dim}(\Lambda(a)\cap V_d)+\sum_{a<t<b} \text{dim}(\Lambda(t)\cap V_d). 
\label{1.3c.1}\end{equation}
For the detail see   \cite{RS1}, \cite{HO}.


Given a Lagrangian path $t \mapsto\Lambda(t)$, the difference of the Maslov indices of it with respect to two Lagrangian subspaces $V_0,V_1 \in \text{Lag}(2n)$, is 
given in terms of the \emph{H\"{o}rmander index} (see \cite[Theorem 3.5]{RS1})
\begin{equation}
s(V_0, V_1; \Lambda(0), \Lambda(1)) = \mu(V_0, \Lambda(t))-\mu(V_1,\Lambda(t)). 
\end{equation} 
Obviously for $\varepsilon>0$ small enough, 
\begin{equation} s(V_0, V_1; \Lambda(0), \Lambda(1)) = s(V_0, V_1; e^{-\varepsilon J}\Lambda(0),e^{-\varepsilon J}\Lambda(1)), \label{hp}
\end{equation} 

The H\"{o}rmander index is independent of the choice of the path connecting $\Lambda(0)$ and $\Lambda(1)$. Under the non-degenerate condition, i.e.,  $V_0,V_1$  are
transversal to $\Lambda(0), \Lambda(1)$ correspondingly, it has the following two basic properties
\begin{equation} \label{eq: Hormander} \begin{split}
s(V_0,V_1;\Lambda(0),\Lambda(1)) & =-s(V_1,V_0;
\Lambda(0),\Lambda(1) ), \\ 
s(\Lambda(0),\Lambda(1);V_0,V_1)& =-s(V_0,V_1; \Lambda(0),\Lambda(1)
). 
\end{split}
\end{equation}
If $V_i = Gr(A_i)$, $\Lambda(i)=Gr(B_i)$ for symmetry matrices $A_i$ and $ B_i$, $i=0,1$, then 
\begin{equation} \label{hc} 
\begin{split}
s(V_0,V_1; \Lambda(0),\Lambda(1)) &=\frac{1}{2}\text{sign}(B_0-A_1)+\frac{1}{2}\text{sign}(B_1-A_0) \\
&  -\frac{1}{2}\text{sign}(B_1-A_1)-\frac{1}{2}\text{sign}(B_0-A_0),
\end{split}
\end{equation}
where for a symmetric  matrix $A$,   $\text{sign}(A)$  is  the signature of the symmetric form 
$\langle A\cdot, \cdot\rangle$. 
 A direct corollary shows that \begin{equation} |s(V_0,V_1;
\Lambda(0),\Lambda(1))|\leq 2n.  \label{hormd}\end{equation}

\mbox{}

\textbf{Acknowledgments}. We thank the anonymous referees for their helpful comments and suggestions. The second author wishes to thank School of Mathematics at Shandong University,  Ceremade at University of Paris-Dauphine and IMCCE at Paris Observatory for their hospitalities, where this work was done when he was a visitor and a postdoc there. 

\bibliographystyle{abbrv}
\bibliography{ref-parabolic}

\end{document}